\definecolor{blue}{rgb}{0.0, 0.313, 0.608}
\newtheorem{theorem}{Theorem}[section]
\newtheorem{lemma}[theorem]{Lemma}
\newtheorem{remark}[theorem]{Remark}
\newtheorem{proposition}[theorem]{Proposition}
\newtheorem{corollary}[theorem]{Corollary}
\newtheorem{definition}[theorem]{Definition}
\numberwithin{equation}{section}
\newcommand\norm[1]{\left\lVert#1\right\rVert}
\newcommand{\R}{\mathbb{R}}
\newcommand{\N}{\mathbb{N}}	
\newcommand{\e}{\varepsilon}
\DeclareMathOperator{\ii}{\mathrm{i}}
\title{Lump solutions of the fractional Kadomtsev--Petviashvili equation}
\author{Handan Borluk\footnote{\texttt{handan.borluk@ozyegin.edu.tr}} \\  %\\
{\small Ozyegin University, Department of Natural and Mathematical Sciences},
{\small  Istanbul,  Turkey} \\
\\
 Gabriele Bruell\footnote{\texttt{ gabriele.brull@math.lth.se}} \\ %\\
   {\small Lund University, Centre for Mathematical Sciences, Lund, Sweden}\\
\\
 Dag Nilsson\footnote{\texttt{dag.nilsson@math.lu.se}} \\ %\\
   {\small Lund University, Centre for Mathematical Sciences, Lund, Sweden}\\
}
\begin{document}

\maketitle

\begin{abstract}

Of concern are lump solutions for the fractional Kadomtsev--Petviashvili (fKP) equation. As in the classical Kadomtsev--Petviashvili equation, the fKP equation comes in two versions: fKP-I (strong surface tension case) and fKP-II (weak surface tension case). We prove the existence of nontrivial lump solutions for the fKP-I equation in the energy subcritical case $\alpha>\frac{4}{5}$ by means of variational methods. It is already known that there exist neither nontrivial lump solutions belonging to the energy space for the fKP-II equation \cite{deBouard0} nor for the fKP-I when $\alpha \leq  \frac{4}{5}$ \cite{linares1}. Furthermore, we show that for any $\alpha>\frac{4}{5}$ lump solutions for the fKP-I equation are smooth and decay quadratically at infinity. Numerical experiments are performed for the existence of lump solutions and their decay. Moreover, numerically, we observe cross-sectional symmetry of lump solutions for the fKP-I equation.

\end{abstract}

\emph{Keywords:}  fractional Kadomtsev-Petviashvili equation, existence of lump solutions, decay of lump solutions, Petviashvili iteration. \\

\renewcommand{\theequation}{\arabic{section}.\arabic{equation}}
\setcounter{equation}{0}
\section{Introduction}%\label{intro}
The present paper is devoted to the study of fully localized solitary solutions (also known as \emph{lump solutions}) of the fractional Kadomtsev--Petviashvili (fKP) equation
\begin{equation}\label{eq:fKP}
	u_t+uu_x-\mathrm{D}^\alpha_x u_x+\sigma \partial_x^{-1}u_{yy}=0.
\end{equation}
Here the real function $u=u(t,x,y)$ depends on the spatial variable $(x,y)\in \R^2$ and the temporal variable $t\in \R_+$.
The linear operator $\mathrm{D}^\alpha_x$ denotes the Riesz potential of order $\alpha\in \R$ in $x$-direction, which is defined by multiplication with $|\cdot|^\alpha$ on the frequency space, that is
\begin{equation*}
	\mathcal{F}(\mathrm{D}^\alpha_x f)(t,\xi_1, \xi_2)=|\xi_1|^\alpha \hat{f}(t,\xi_1, \xi_2),
\end{equation*}
where the operator $\mathcal{F}$ denotes the extension to the space of tempered distributions $\mathcal{S}'({\R^n})$
of the Fourier transform
\[
\mathcal{F}(f)(\xi):=\int_{\R^n} f(x)e^{-\mathrm{i}\xi x}\, dx
\]
on the Schwartz space $\mathcal{S}({\R^n})$ with inverse  $\mathcal{F}^{-1}(f):=\frac{1}{2\pi}\mathcal{F}(f)(-\cdot)$. We also write $\hat f:=\mathcal{F}(f)$.
The operator $\partial_x^{-1}$ is defined as a Fourier multiplier operator on the $x$-variable as
$
\mathcal{F}(\partial_x^{-1}f)(t,\xi_1,\xi_2)=\frac{1}{\mathrm{i}\xi_1}\hat f(t,\xi_1,\xi_2).
$
In the case $\alpha=2$  equation \eqref{eq:fKP} becomes the classical Kadomtsev--Petviashvili (KP) equation
which was introduced by Kadomtsev \& Petviashvili \cite{kp} as a weakly two-dimensional extension of the celebrated Korteweg--de Vries (KdV) equation,
\begin{equation*}%\label{kdv}
	u_t+uu_x+u_{xxx}=0,
\end{equation*}
which is a spatially one-dimensional equation appearing in the context of small-amplitude shallow water-wave model equations. The KP equation comes in two versions: For $\sigma=-1$ it is called KP-I and for $\sigma=1$ it is called KP-II. Roughly speaking, the KP-I equation represents the case of strong surface tension, while the KP-II equation appears as a model equation for weak surface tension. 
%Thus, the KP-I equation has limited relevance in the context of water waves, since strong surface tension effects are rather dominant in thin layers including viscous forces. However the KP-I equation also appears for instance as  a long wave limit for the Gross--Pitaevskii equation \cite{BGS}.
Analogously to the classical case, the  fKP  equation is a two-dimensional extension of the fractional Korteweg--de Vries (fKdV) equation
\begin{equation*}
	u_t+uu_x-\mathrm{D}^\alpha_x u_x=0
\end{equation*}
and  \eqref{eq:fKP} is referred to as the fKP-I equation when  $\sigma=-1$ and as the fKP-II equation when $\sigma=1$.
Notice that for
% Equation \eqref{fkp-general} represents various special cases of Kadomtsev-Petviashvili (KP) equation: if $\alpha=2$, then $\mathrm{D}^2=-\partial_x^2$ and we recover the classical KP equation and if
$\alpha =1$ in \eqref{eq:fKP} we recover the KP-version of the Benjamin--Ono equation. During the last decade there has been a growing  interest in fractional regimes such as the fKdV or the fKP equation (see for example \cite{albert1, BBN, Eychenne, fonseca, franklenzmann, Kenig, klein, linares1, linares2, MS, natali, pava} and the references therein). Even though most of these equations are not derived by asymptotic expansions from governing equations  in fluid dynamics, they can be thought of as dispersive corrections.

\medskip

Formally, the fKP equation does not only conserve the $L^2$--norm
\[
M(u)=\int_{\R^2} u^2\,\mathrm{d}(x,y),
\]
but also the energy
\[
E_\alpha(u):=\int_{\R^2}\left( \frac{1}{2} (\mathrm{D}^\frac{\alpha}{2}_x u)^2-\tfrac{1}{6}u^3-\tfrac{1}{2}\sigma(\partial_x^{-1}u_y)^2 \right)\, \mathrm{d}(x,y).
\]
Notice that the corresponding energy space
\[
X_\frac{\alpha}{2}(\R^2):=\{u\in L^2(\R^2)\mid \mathrm{D}^\frac{\alpha}{2}_xu , \partial_x^{-1}u_y\in L^2(\R^2)\}
\]
equipped with the norm 
\begin{equation*}
||\phi||_\frac{\alpha}{2}^2:=\norm{\phi}_{L^2(\R^2)}^2+\norm{\mathrm{D}_x^\frac{\alpha}{2}\phi}_{L^2(\R^2)}^2+\norm{\partial_x^{-1}\partial_y\phi}_{L^2(\R^2)}^2,
\end{equation*}
includes a zero-mass constraint with respect to $x$.
We refer to \cite{linares1} for derivation issues and well-posedness results for the Cauchy problem associated with \eqref{eq:fKP}. The fKP equation is invariant under the scaling
\[
u_\lambda(t,x,y)=\lambda^\alpha u(\lambda^{\alpha+1}t,\lambda x, \lambda^{\frac{\alpha+2}{2}}y),
\]
and $\|u_\lambda\|_{L^2}=\lambda^{\frac{3\alpha-4}{4}}\|u\|_{L^2}$. Thus, $\alpha=\frac{4}{3}$ is the \emph{$L^2$-critical exponent} for the fKP equation. The ranges $\alpha>\frac{4}{3}$  and $\alpha<\frac{4}{3}$ are called \emph{sub}- and \emph{supercritical}, respectively. Due to the embedding $X_{\frac{\alpha}{2}}\subset L^3(\R^2)$ for $\alpha\geq \frac{4}{5}$ (cf. \cite[Lemma 1.1]{linares1}), we call $\alpha=\frac{4}{5}$ the \emph{energy critical exponent} for the fKP equation.

\medskip

 A traveling-wave solution $u(t,x,y)=\phi(x-ct,y)$ of the fKP equation propagating in $x$-direction with wave speed $c>0$, satisfies  the steady equation
\begin{equation}\label{eq:steady_KPI}
-c\phi +\frac{1}{2}\phi^2-\mathrm{D}^\alpha_x \phi+\sigma \partial_x^{-2}\phi_{yy}=0.
\end{equation}

A \textit{lump solution} is a traveling-wave solution which decays to $0$ as $|(x,y)|\rightarrow\infty$.

%\subsection{Some properties of the fKP equations}

%In \cite{molinet3,molinet2} the authors established global well-posedness on the background of a non-localized solution (as for instance the line solitary wave solutions, which are localized in $x$-direction and trivially extended in $y$-direction) for the classical KP-I and KP-II equations. %\textcolor{blue}{A corresponding result for the fKP family seems to be missing in the literature.}

\medskip

\subsection{Main results}
Our aim is to study the existence and spatial decay of lump solutions for the fKP equation. Since it is known \cite{deBouard0, linares1}
that the fKP-II equation for any $\alpha$ as well as the fKP-I equation for $\alpha\leq \frac{4}{5}$ do not admit any lump solutions in $X_\frac{\alpha}{2}\cap L^3(\R^2)$,
the study of this paper is concerned with traveling waves for the fKP-I equation 
for $\alpha>\frac{4}{5}$.
We prove the following two main theorems. Moreover, we study lump solutions and some of their properties numerically. 

\begin{theorem}[Existence of lump solutions]\label{existence_theorem}
For any $\tfrac{4}{5}< \alpha$ there exists a lump solution $\phi\in X_\frac{\alpha}{2}$ of \eqref{eq:steady_KPI} with $\sigma=-1$.
\end{theorem}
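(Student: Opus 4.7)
The plan is to obtain a lump as a constrained minimizer and to recover compactness of minimizing sequences via Lions' concentration--compactness principle. Fix $c>0$ and set
\[
Q_c(u) := c\norm{u}_{L^2}^2 + \norm{\mathrm{D}_x^{\alpha/2} u}_{L^2}^2 + \norm{\partial_x^{-1}\partial_y u}_{L^2}^2,
\]
which is equivalent to $||u||_{\alpha/2}^2$ for $c>0$. Consider the minimization problem
\[
m_c := \inf\Bigl\{Q_c(u) : u \in X_{\alpha/2},\ \int_{\R^2} u^3\, d(x,y) = 1\Bigr\}.
\]
The embedding $X_{\alpha/2}\hookrightarrow L^3(\R^2)$ for $\alpha\geq\tfrac{4}{5}$ cited in \cite[Lemma~1.1]{linares1} yields $m_c>0$. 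By the Lagrange multiplier rule, any minimizer $\phi$ of $m_c$ satisfies $L\phi = m_c\,\phi^2$, where $L := c+\mathrm{D}_x^\alpha + \partial_x^{-2}\partial_y^2$ is the self-adjoint operator whose quadratic form is $Q_c$, and the rescaling $\phi\mapsto 2m_c\phi$ then produces a solution of \eqref{eq:steady_KPI} with $\sigma=-1$.

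To obtain such a minimizer I would take a minimizing sequence $(u_n)\subset X_{\alpha/2}$ with $\int u_n^3=1$ and $Q_c(u_n)\to m_c$; this sequence is automatically bounded in $X_{\alpha/2}$. Apply the concentration--compactness lemma to the probability measures $d\rho_n := |u_n|^3\, d(x,y)$. \emph{Vanishing} is excluded by a Lieb-type argument: local-$L^3$ smallness combined with boundedness in $X_{\alpha/2}$ would force $\norm{u_n}_{L^3(\R^2)}\to 0$, contradicting $\int u_n^3 = 1$. \emph{Dichotomy} is excluded by strict subadditivity: the scalings $Q_c(\kappa u) = \kappa^2 Q_c(u)$ and $\int(\kappa u)^3 = \kappa^3 \int u^3$ give
\[
m_{c,\theta} := \inf\{Q_c(u) : \textstyle\int u^3 = \theta\} = \theta^{2/3}\, m_c,
\]
and strict concavity of $t\mapsto t^{2/3}$ then yields $m_{c,\theta}+m_{c,1-\theta} = \bigl(\theta^{2/3}+(1-\theta)^{2/3}\bigr)m_c > m_c$ for every $\theta\in(0,1)$, contradicting the dichotomy inequality $m_c\geq m_{c,\theta}+m_{c,1-\theta}$. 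The remaining alternative provides translations $(x_n,y_n)\in\R^2$ along which a subsequence of $u_n(\cdot-x_n,\cdot-y_n)$ is tight in $L^3(\R^2)$; combined with weak convergence in $X_{\alpha/2}$ and lower semicontinuity of $Q_c$, this upgrades (via the Hilbert structure of $X_{\alpha/2}$ with norm $\sqrt{Q_c}$) to strong convergence to a nontrivial minimizer $\phi$.

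The main obstacle is the concentration--compactness analysis itself, which is delicate in the anisotropic space $X_{\alpha/2}$. The nonlocal operator $\partial_x^{-1}\partial_y$ together with the implicit zero-$x$-mean condition makes every spatial localization subtle: standard cutoffs may damage both the $\partial_x^{-1}\partial_y$-norm and the anisotropic Sobolev structure. One therefore needs a Lieb-type compactness lemma tailored to $X_{\alpha/2}$ to rule out vanishing, and a careful splitting procedure respecting the $X_{\alpha/2}$-structure to justify the dichotomy step. The critical threshold $\alpha=\tfrac{4}{5}$ enters precisely through the anisotropic Sobolev embedding that fuels both steps.
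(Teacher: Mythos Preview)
Your plan is essentially the paper's own argument: constrained minimization of the quadratic form $Q_c$ subject to the cubic constraint, compactness of minimizing sequences via Lions' concentration--compactness, strict subadditivity from the homogeneity $m_{c,\theta}=\theta^{2/3}m_c$, and the anisotropic embedding $X_{\alpha/2}\hookrightarrow L^3$ of \cite{linares1} driving both the vanishing and the final identification steps. The only variation is that the paper runs concentration--compactness on the energy density $e_n=\tfrac12\bigl(\phi_n^2+(\mathrm{D}_x^{\alpha/2}\phi_n)^2+(\partial_x^{-1}\partial_y\phi_n)^2\bigr)$ rather than on $|u_n|^3$, and implements the ``careful splitting procedure'' you anticipate by cutting off $\partial_x^{-1}\phi_n$ (not $\phi_n$ itself) so as to keep the anisotropic norm under control.
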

%{\color{red}{GB: Do we actually have the restriction $\alpha\leq 2$}?}

\begin{theorem}[Decay of lump solutions]\label{thm:decay}  
Any lump solution $\phi \in X_{\frac{\alpha}{2}}$ of \eqref{eq:steady_KPI} with $\sigma=-1$ is smooth and satisfies
\[
r^2\phi \in L^\infty(\R^2),\qquad \mbox{where}\quad r^2(x,y)=x^2+y^2.
\]
\end{theorem}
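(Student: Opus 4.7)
My plan is to recast the steady equation \eqref{eq:steady_KPI} (with $\sigma=-1$) as a convolution identity and then read both smoothness and quadratic decay off the resulting kernel. Formally, multiplying \eqref{eq:steady_KPI} by $\xi_1^2$ on the Fourier side removes the KP-type singularity at $\xi_1=0$ and gives
\[
\widehat\phi(\xi_1,\xi_2)=\tfrac12\,M(\xi_1,\xi_2)\,\widehat{\phi^2}(\xi_1,\xi_2),\qquad M(\xi_1,\xi_2)=\frac{\xi_1^{\,2}}{c\,\xi_1^{\,2}+|\xi_1|^{\alpha+2}+\xi_2^{\,2}},
\]
so that $\phi=\tfrac12 K\ast\phi^2$ with $K:=\mathcal{F}^{-1}M$ well defined as a tempered distribution. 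This representation, rather than \eqref{eq:steady_KPI} itself, will be the working form of the equation.

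For the smoothness part, I would start from $X_{\alpha/2}\subset L^3(\R^2)$ (by \cite[Lemma~1.1]{linares1}), so $\phi^2\in L^{3/2}$. The symbol $M$ is bounded and decays at infinity like $(|\xi_1|^\alpha+|\xi_2|^2|\xi_1|^{-2})^{-1}$, so a Mikhlin/HLS-type argument applied to $M$ gains a definite amount of (anisotropic) regularity on $\phi^2$. A standard bootstrap, alternating this smoothing with the fact that $\phi\in H^s\cap L^\infty$ implies $\phi^2\in H^s$ for every $s\geq 0$ large enough, then gives $\phi\in H^s(\R^2)$ for all $s\ge 0$, and hence $\phi\in C^\infty\cap L^\infty$.

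The quadratic decay is the heart of the matter. The goal is to show $|K(x,y)|\lesssim (1+r^2)^{-1}$ pointwise, and then transfer this to $\phi$. To analyze $K$ I would split $M=\chi\, M+(1-\chi)M$ with a smooth radial cut-off $\chi$ localising near the origin. The high-frequency part is treated by oscillatory-integral / stationary-phase estimates, using that $M$ is smooth away from $\{\xi_1=0\}$ and decays fast enough in $(\xi_1,\xi_2)$. The low-frequency part is more delicate because $|\xi_1|^\alpha$ has only $C^{\alpha}$-regularity at $\xi_1=0$; one has to combine an anisotropic rescaling with a careful expansion to extract the leading $r^{-2}$ behaviour, with the classical case $\alpha=2$ treated by de Bouard--Saut serving as a blueprint. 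Once the pointwise bound on $K$ is available, the identity $\phi=\tfrac12 K\ast\phi^2$ is used in the standard way: split the convolution into the region where the argument of $\phi^2$ is small and the region where it is large, use $\phi^2\in L^1$ (from the smoothness/boundedness step) and the decay of $K$ in the latter, and bootstrap on a preliminary decay already known for $\phi$ (e.g.\ $\phi(x,y)\to 0$) to upgrade to $\|(1+r^2)\phi\|_{L^\infty}<\infty$.

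The main obstacle is the analysis of $K$ near $\xi=0$. Two effects conspire against a direct application of classical oscillatory-integral machinery: the anisotropic singularity $\xi_2^{\,2}/\xi_1^{\,2}$ characteristic of KP-type equations, and the limited smoothness of $|\xi_1|^\alpha$ for non-integer $\alpha$. Handling this will likely require a dyadic decomposition adapted to the parabolic scaling $\xi_2\sim|\xi_1|^{1+\alpha/2}$ together with a quantitative use of the fact that only derivatives up to order roughly $\lfloor\alpha\rfloor$ of the symbol are bounded.
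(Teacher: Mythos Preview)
Your overall architecture matches the paper's: rewrite \eqref{eq:steady_KPI} as $\phi=\tfrac12 K_\alpha*\phi^2$, bootstrap smoothness via multiplier estimates, prove $r^2K_\alpha\in L^\infty$, and then transfer the decay to $\phi$. However, two genuine gaps would prevent your outline from closing.

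First, the pointwise bound $|K(x,y)|\lesssim(1+r^2)^{-1}$ you aim for is false: $K_\alpha$ is singular at the origin. The paper only obtains $r^2K_\alpha\in L^\infty$ together with $K_\alpha\in L^r(\R^2)$ for $1<r<\tfrac{8+2\alpha}{8-\alpha}$; for $\alpha$ close to $\tfrac45$ this range barely exceeds $1$, so $K_\alpha$ is far from bounded. Your high/low frequency splitting of the symbol does not address this, since the singularity of the kernel in physical space comes from the slow decay of $m_\alpha$ at infinity, not from its behaviour near $\xi=0$. Incidentally, the decay $r^2K_\alpha\in L^\infty$ is obtained much more simply than you anticipate: integrate out $\xi_2$ explicitly via $\mathcal{F}\big(\tfrac{1}{a^2+\cdot^2}\big)(y)=\tfrac{1}{a}e^{-a|y|}$ and then integrate by parts twice in $\xi_1$; no dyadic decomposition is needed.

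Second, and more seriously, your transfer step does not work as stated. Using $r^2(x,y)\lesssim r^2(x-x',y-y')+r^2(x',y')$ one is left with controlling $\|K_\alpha\ast(r^2\phi^2)\|_\infty\le\|K_\alpha\|_{L^q}\|r^2\phi^2\|_{L^{q'}}$, and the second factor requires an \emph{a priori} decay rate on $\phi$ --- merely knowing $\phi\to0$ gives no control of $r^2\phi^2$ in any $L^{q'}$. The paper supplies this missing ingredient in two nontrivial steps you do not have: (i) a weighted energy identity (multiply the equation by $(x^2+y^2)\phi$ and integrate) yielding $r\,\nabla\phi\in L^2$; (ii) an alternative representation $\phi=-\tfrac{\mathrm{i}}{2}H_\alpha\ast(\phi^2)_x$ with a \emph{different} kernel satisfying $rH_\alpha\in L^\infty$ and $H_\alpha\in L^{p'}$ for some $p'>2$, which combined with (i) gives $r\phi\in L^\infty$. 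Only after this intermediate decay (then bootstrapped to $r^{1+\delta}\phi\in L^\infty$) can the final estimate $r^2\phi^2\in L^{q'}$ be closed. Your proposal needs both the weighted derivative estimate and the auxiliary kernel $H_\alpha$ to become a proof.
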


\medskip

The classical KP-I equation possesses an explicit lump solution of the form
\begin{equation}\label{exact-solution}
\phi_e(x-ct,y)= 8c\frac{1-\tfrac{c}{3}(x-ct)^2+\tfrac{c^2}{3}y^2}{\left(1+\tfrac{c}{3}(x-ct)^2+\tfrac{c^2}{3}y^2\right)^2}.
\end{equation}
%where $\beta$ measures the strength of the surface tension ($\beta>1/3$ is the strong surface tension regime and $\beta<1/3$ is the weak surface tension regime). 
%In contrast, the KP-II equation does not possess any lump solutions \cite[Theorem 1.1]{deBouard0}.

We would like to point out that de Bouard \& Saut studied the existence of of lump solutions for the generalized KP-I equation
%In the paper \cite{deBouard0} they prove existence of lump solutions for a generalized KP-I equation \textcolor{blue}{I will add some more references on this existence result}:
\begin{equation}\label{KP-classical}
(u_t+u^pu_x-u_{xxx})_x-u_{yy}=0
\end{equation}
where $p=m/n\geq 1$, $m,n$ relatively prime and $n$ odd in \cite{deBouard0}. Furthermore, in their continuation paper \cite{deBouard}, de Bouard \& Saut investigated the symmetry and decay of lump solutions for \eqref{KP-classical} and showed that for all $p\geq 1$ the decay is quadratic. Our studies follow a similar approach as in \cite{deBouard0, deBouard}. However, special attention needs to be given to the nonlocal operator $\mathrm{D}_x^\alpha$. While many proofs can be adapted with a bit more technical effort due to the nonlocal operator, the result on decay of lump solutions in the supercritical case $\frac{4}{5}<\alpha<\frac{4}{3}$ (which includes the Benjamin--Ono KP version for $\alpha=1$) needs a modified approach, since in the supercritical case the symbol of an operator related to the linear dispersion is no longer $L^2$-integrable.

\medskip

\textbf{On the existence result:} We give a brief outline of the existence proof for lump solutions of \eqref{KP-classical} in \cite{deBouard0} by variational methods, since we will be using the same strategy to prove existence of lump solutions of the fKP-I equation \eqref{eq:fKP}. First consider the constrained minimization problem 
\begin{equation*}%\label{var_formulation_classical}
I_\mu=\inf\left\{\norm{\phi}_Y^2\colon \phi\in Y , \int_{\R^2}\phi^{p+2}=\mu\right\}
\end{equation*}
for $\mu>0$ fixed, where $Y$ is the closure of $\partial_x(C_0^\infty(\mathbb{R}^2)$ with respect to the norm 
\begin{equation*}
\norm{\partial_x\varphi}_Y^2=\norm{\nabla\varphi}_{L^2(\R^2)}^2+\norm{\partial_x^2\varphi}_{L^2(\R^2)}^2.    
\end{equation*}
Via the Lagrange multiplier principle one finds (after rescaling) that solutions of the constrained minimization problem $I_\mu$ are lump solutions of \eqref{KP-classical}. The task is then to prove existence of solutions of $I_\mu$ and this is achieved using the concentration-compactness theorem (cf. Theorem \ref{cc}). The variational formulation associated with $I_\mu$ has several good properties. The functional being minimized is just the norm of the space $Y$. It is therefore immediate that it is coercive, bounded from below and weakly lower semi-continuous; properties which are all advantageous in the context of minimization problems, see \cite[Theorem 1.2]{struwe}. Furthermore, since the norm is homogeneous, it is easily shown that $I_\mu$ is subadditive as a function of $\mu$ and this property is essential in proving that the dichotomy scenario in the concentration-compactness theorem does not occur.  

\medskip

We prove Theorem \ref{existence_theorem} by extending the strategy of \cite{deBouard0},
outlined above, 
to the fractional case. Generally speaking, the fractional derivative and the fact that we are allowing for weak dispersion makes the proof of Theorem \ref{existence_theorem} more technical than its classical local counterpart $(\alpha=2)$. A key ingredient in the proof is the anisotropic Sobolev inequality \cite[Lemma 1.1]{linares1} (see also Proposition \ref{technical_results} (ii)), which in particular says that for $\tfrac{4}{5}\leq \alpha$, the space $X_\frac{\alpha}{2}$ is continuously embedded in $L^3(\mathbb{R}^2)$. This result is what determines the values of $\alpha$ for which we can prove existence of solitary waves. In fact, for $\alpha\leq \tfrac{4}{5}$ there exist no nontrivial lump solutions of for the fKP-I equation in $X_\frac{\alpha}{2}\cap L^3(\mathbb{R}^2)$ \cite[Proposition 1.2]{linares1}.

\medskip

We would like to mention that there are several existence results on lump solutions using variational approaches for other two-dimensional equations. 
The full water-wave problem admits lump solutions both for strong \cite{gs,bgsw} and weak \cite{bgw} surface tension. In the strong surface tension case the lump solutions can be approximated by rescalings of KP-I lumps, while in the weak surface tension case the lump solutions can be approximated by rescalings of Davey-Stewartson type solitary waves.
The full dispersion KP (FDKP) equation was introduced in \cite[chapter 8]{lannes} as a model for weakly transversal three dimensional water-waves which preserves the dispersion relation of the full water-wave problem. Just as for the classical and fractional KP equation, the FDKP equation can be considered for both strong (FDKP-I) and weak (FDKP-II) weak surface tension. In \cite{eg} it was shown that the FDKP-I equation admits lump solutions and later on in \cite{egn} it was shown that also the FDKP-II equation possesses lump solutions. This is in contrast to the fKP-II equation, which does not admit any lump solutions \cite{linares1}. Just like for the full water-wave problem, in the strong surface tension case the lump solutions can be approximated by rescalings of KP-I lumps, while in the weak surface tension case the lump solutions can be approximated by rescalings of Davey--Stewartson type solitary-waves.

\medskip

\textbf{On the decay result:} The proof of Theorem \ref{thm:decay} on the decay properties of lump solutions is closely related to that of \cite[Theorem 3.1.2]{BonaLi} and \cite[Theorem 4.1]{deBouard}. The steady equation \eqref{eq:steady_KPI} can be rewritten as a convolution equation of the form 
\begin{equation*}
		\phi=\frac{1}{2}K_\alpha*\phi^2,\qquad \hat K_\alpha(\xi_1,\xi_2)= m_\alpha(\xi_1,\xi_2),
	\end{equation*}
where the symbol $m_\alpha$ is given by
\[
 m_\alpha(\xi_1,\xi_2)=\frac{\xi_1^2}{|\xi|^2+\xi_1^{\alpha +2}}.
\]

\begin{remark}\label{rem:optimal}
\emph{An immediate consequence of the discontinuity of the symbol $m_\alpha$ at the origin is that any nontrivial, continuous lump solution of \eqref{eq:steady_KPI} decays \emph{at most} quadratically. Let us assume for a contradiction that $\phi$ is a nontrivial, continuous lump solution, which decays at infinity as $|\cdot|^{-\delta}$ for some $\delta>2$. Then $\phi \in L^1(\R^2)$, which implies that the Fourier  transformation of $\phi$ is continuous. But
$
\hat \phi = \frac{1}{2}m_{\alpha}\hat{\phi^2}
$
cannot be continuous at the origin, since $\hat{\phi^2}(0,0)>0$ and $m_\alpha$ is discontinuous at the origin. We conclude that the singularity of the symbol $m_\alpha$ induced by the transverse direction forces the decay of any nontrivial, continuous lump solution to be at most quadratic. 
}
\end{remark}

\begin{remark}
\emph{In view of Remark \ref{rem:optimal} the decay rate in Theorem \ref{thm:decay} is optimal.}
\end{remark}

The idea is to study the kernel function $K_\alpha$ and to show that it has exactly quadratic decay at infinity (independent of $\alpha$). Then the decay properties of $K_\alpha$ are used to show that also $\phi$ decays quadratically at infinity.

\medskip

\textbf{On the numerics:} We conduct numerical experiments to observe the lump solutions and some of their properties. For this purpose, we generate the solutions numerically  by using Petviashvili iteration method. The method was proposed first by Petviashvili \cite{petviashvili} to compute the lump solutions of the KP-I equation. The convergence of the method for the KP-equation was later discussed in \cite{pelinovski} and now it is widely used to numerically evaluate  traveling wave solutions of evolution equations (see for example \cite{amaral, oruc, pelinovski2} and the references therein).

\medskip

Applying  the Fourier transform to  \eqref{eq:steady_KPI} with respect to the space variables $(x,y)$ we obtain
\begin{equation}\label{eqF}
	c\widehat{\phi}-\frac{1}{2}\widehat{\phi^2}+|\xi_1|^{\alpha} \widehat{\phi}+\frac{\xi_2^2}{\xi_1^2} \widehat{\phi}=0.
\end{equation}
An iterative algorithm for the equation \eqref{eqF} can be proposed as
\begin{equation}\label{iterationv1}
\widehat{\phi}_{n+1}(\xi_1,\xi_2)=\frac{\widehat{\phi^2_n}(\xi_1,\xi_2)} {2(c+\frac{\xi_2^2}{\xi_1^2}+|\xi_1|^{\alpha})} ,~~~~ n=1,2,\dots,
\end{equation}
where $\phi_n$ is the $n^{th}$ iteration of the numerical solution. Since \eqref{iterationv1} is generally divergent the Petviashvili iteration is given as
\begin{equation}\label{iteration}
\widehat{\phi}_{n+1}(\xi_1,\xi_2)=\frac{(M_n)^{\nu}} {2(c+\frac{\xi_2^2}{\xi_1^2}+|\xi_1|^{\alpha})} \widehat{\phi^2_n}(\xi_1,\xi_2),~~~~ n=1,2,\dots,
\end{equation}
by introducing the stabilizing factor
\begin{equation*}
  M_n=\frac  {  \int_{\mathbb{R}^2} 2 (c+\frac{\xi_2^2}{\xi_1^2}+|\xi_1|^{\alpha})~(\widehat{\phi_n})^2   \mathrm{d}(\xi_1,\xi_2)  }
             {  \int_{\mathbb{R}^2}  \widehat{\phi^2_n}~\widehat{\phi_n}~ \mathrm{d}(\xi_1,\xi_2)   }.
\end{equation*}
Here the free parameter $\nu$ is chosen as $2$ for the fastest convergence. To evaluate the term $1/ \xi_1^2 $ for $\xi_1=0$,  we regularize it as $1/( \xi_1+i\lambda)^2 $, where $ \lambda = 2.2 \times 10^{-16}$ as in \cite{klein1, klein2}. We  control the iterative  process by the error between two consecutive iterations
\begin{equation*}
    \mbox{error}(n)=\| \phi_n-\phi_{n-1}\|_\infty,~~~~ n=1,2,\dots,
\end{equation*}
by the stabilization factor error $|1-M_n|$, and the residual error
\begin{equation*}
    Res(n)=\| \mathcal{S}\phi_n\|_\infty,~~~~ n=1,2,\dots
\end{equation*}
where 
\begin{equation*}
    \mathcal{S}=\left(-c\phi +\frac{1}{2}\phi^2-\mathrm{D}^\alpha_x \phi\right)_{xx}-\phi_{yy}.
\end{equation*}
We make sure that  the errors are of order less than $10^{-5} $. In addition, we control the decay of Fourier coefficients $\hat{\phi}(\xi_1, \xi_2)$ in the numerical experiments. 

% \subsection{Aim of the paper and results}

% The main result of Section \ref{existence_sec} is
% \begin{theorem}\label{existence_theorem}
% Let $\tfrac{4}{5}\leq \alpha\leq 2$. The fKP-I equation \eqref{eq:fKP} possesses a lump solution $\phi\in X_\frac{\alpha}{2}$.
% \end{theorem}
% We prove this theorem by extending the strategy of \cite{deBouard0} outlined in Section \ref{intro}, to the fractional case. Generally speaking, the fractional derivative and the fact that we are allowing for weak dispersion makes the proof of Theorem \ref{existence_theorem} more technical than its classical $(\alpha=2)$ counterpart. A key ingredient in the proof is the anisotropic Sobolev inequality \cite[Lemma 1.1]{linares1} (see also Proposition \ref{technical_results} (ii)) which in particular says that for $\tfrac{4}{5}\leq \alpha\leq 2$, $X_\frac{\alpha}{2}$ is continuously embedded in $L^3(\mathbb{R}^2)$. This result is what determines the values of $\alpha$ for which we can prove existence of solitary waves. In fact, for $\alpha<\tfrac{4}{5}$ there are no nontrivial lump solutions of \eqref{eq:fKP} in $L^3(\mathbb{R}^2)\cap X_\frac{\alpha}{2}$ \cite[Proposition 1.2]{linares1}.

%The other main ingredient of the proof is the concentration compactness theorem (Theorem ref{cc}). 
%we prove existence of lump solutions for the fKP-I equation \eqref{eq:fKP}
\medspace

\medskip

\subsection{Notation and organization of the paper}
We first introduce a notation, which is frequently used in the sequel. Let $f$ and $g$ be two { positive} functions.  We write $f\lesssim g$ ($f\gtrsim g$) if there exists a constant $c>0$ such that $f\leq c g$ ($f\geq cg$). Moreover, we use the notation $f\eqsim g$ whenever $f\lesssim g$ and $f\gtrsim g$. 

\medskip

We conclude the introduction by the organization of the paper:  In Section \ref{existence_sec} we prove existence of lump solutions for the fKP-I equation (Theorem \ref{existence_theorem}) via a variational approach. We also present numerically generated lump solutions and observe the cross-sectional symmetry of the solutions numerically. Section \ref{decay_sec} is devoted to the proof of Theorem \ref{thm:decay}, which relies upon a careful study of the decay and regularity of the kernel function $K_\alpha$. The appendix contains some technical results which are needed for the analysis in Section \ref{decay_sec}.

\bigskip
	\section{Existence of solitary solutions}\label{existence_sec}
 
We consider the (rescaled) traveling wave fKP-I equation:
\begin{equation}\label{travelling_eq_sec_existence}
\phi+\mathrm{D}_x^\alpha \phi+\partial_x^{-2}\partial_y^2\phi-\frac{\phi^2}{2}=0.
\end{equation}
Equation \eqref{travelling_eq_sec_existence} can be realized as a constrained minimization problem. Indeed, let
\begin{align*}
\mathcal{L}(\phi)=\frac{1}{2}\int_{\mathbb{R}^2} \left(\phi^2+(\mathrm{D}_x^\frac{\alpha}{2}\phi)^2+(\partial_x^{-1}\partial_y\phi)^2 \ \right)\mathrm{d}(x,y),\qquad \mathcal{N}(\phi)=\frac{1}{6}\int_{\mathbb{R}^2}\phi^3\ \mathrm{d}(x,y),
\end{align*}
which we study in the space $X_\frac{\alpha}{2}$ 
%\begin{equation*}
%X_\frac{\alpha}{2}=\{\phi\in L^2(\mathbb{R}^2)\colon \mathrm{D}_x^\frac{\alpha}{2}\phi,\ \partial_x^{-1}\partial_y\phi\in L^2(\mathbb{R}^2)\},
%\end{equation*}
and consider the constrained minimization problem 
\begin{equation}\label{min_problem}
I_\mu=\inf\{\mathcal{L}(\phi)\colon \phi\in X_\frac{\alpha}{2},\ \mathcal{N}(\phi)=\mu\}.
\end{equation}
In order to find nontrivial solutions we assume that $\mu\neq 0$ and without loss of generality we may further assume that $\mu > 0$.

Let $\phi$ be a solution of \eqref{min_problem}. Then there exists a Lagrange multiplier $\lambda\in \R$ such that 
\begin{equation}\label{lagrange1}
\mathrm{d}\mathcal{L}(\phi)-\lambda\mathrm{d}\mathcal{N}(\phi)=0.
\end{equation}
Since
\begin{align*}
\mathrm{d}\mathcal{L}(\phi)=\phi+\mathrm{D}_x^\alpha\phi+\partial_x^{-2}\partial_y^2\phi,\qquad
\mathrm{d}\mathcal{N}(\phi)=\frac{1}{2}\phi^2.
\end{align*}
equation \eqref{lagrange1} becomes
\begin{equation*}%\label{lagrange2}
\phi+\mathrm{D}_x^\alpha\phi+\partial_x^{-2}\partial_y^2\phi-\lambda\frac{\phi^2}{2}=0.
\end{equation*}
By rescaling
\begin{equation*}
\phi(x,y)=|\lambda|^{-1}\tilde{\phi}(x,y),
\end{equation*}
we find that $\tilde{\phi}$ satisfies the equation 
\begin{equation*}
\tilde{\phi}+\mathrm{D}_x^\alpha\tilde{\phi}+\partial_x^{-2}\partial_y^2\tilde{\phi}-\text{sgn}(\lambda)\frac{\tilde{\phi}^2}{2}=0.
\end{equation*}
If $\lambda>0$, this is equation \eqref{travelling_eq_sec_existence} and if $\lambda<0$ we apply the transformation $\tilde{\phi}\mapsto -\tilde{\phi}$ and again recover \eqref{travelling_eq_sec_existence}. Therefore, in order to prove the existence of the solutions of equation \eqref{travelling_eq_sec_existence}, we will prove existence of solutions of the constrained minimization problem \eqref{min_problem}.

\medskip

In the sequel, let us fix $\mu>0$ (this will ensure that $I_\mu>0$, see Corollary \ref{positivity}) and let $\{\phi_n\}_{n\in \N}\subset X_{\frac{\alpha}{2}}$ be a minimizing sequence such that $\mathcal{N}(\phi_n)=\mu$ and $\lim_{n\to \infty}\mathcal{L}(\phi_n)=I_\mu$. We aim to show that there exists a subsequence (not relabeled) of $\{\phi_n\}_{n\in \N}$, which converges to a function $\phi \in X_{\frac{\alpha}{2}}$ satisfying $\mathcal{L}(\phi)=I_\mu$ and $\mathcal{N}(\phi)=\mu$.

Let us set
\begin{equation*}
e_n=\frac{1}{2}\left(\phi_n^2+(\mathrm{D}_x^\frac{\alpha}{2}\phi_n)^2+(\partial_x^{-1}\partial_y\phi_n)^2\right)
\end{equation*}
and note that
\begin{equation*}
\mathcal{L}(\phi_n)=\int_{\mathbb{R}^2}e_n\ \mathrm{d}(x,y).
\end{equation*}
We will use the following version of the concentration--compactness theorem for the sequence $\{e_n\}_{n\in \N}$ and show that the concentration scenario occurs. This is then used to construct a convergent subsequence of $\{\phi_n\}_{n\in \N}$, converging to a solution of \eqref{min_problem}

%We will use the following version of the concentration-compactness theorem to prove existence of solutions of \eqref{min_problem}.
\begin{theorem}\label{cc}
Let $d\in \N$. Any sequence $\{e_n\}_{n\in \N}\subset L^1(\mathbb{R}^d)$ of non-negative functions such that 
\begin{equation*}
\lim_{n\rightarrow \infty}\int_{\mathbb{R}^d}e_n\ \mathrm{d}x=I>0,
\end{equation*}
admits a subsequence, denoted again by $\{e_n\}_{n\in\mathbb{N}}$, for which one of the following phenomena occurs:
\begin{itemize}
\item \textbf{Vanishing:} For each $r>0$, one has
\begin{equation*}
\lim_{n\rightarrow \infty}\left(\sup_{x\in \mathbb{R}^d}\int_{B_r(x)}e_n\ \mathrm{d}x\right)=0.
\end{equation*}
\item \textbf{Dichotomy:} There are sequences $\{x_n\}_{n\in\mathbb{N}}\subset \mathbb{R}^d$, $\{M_n\}_{n\in\mathbb{N}}, \{N_n\}_{n\in\mathbb{N}}\subset \mathbb{R}$ and $I^*\in(0,I)$ such that $M_n, N_n \rightarrow \infty,\ \frac{M_n}{N_n}\rightarrow 0$ and 
\begin{equation*}
\lim_{n\rightarrow \infty}\int_{B_{M_n}(x_n)}e_n\ \mathrm{d}x=I^*,\quad \lim_{n\rightarrow \infty}\int_{B_{N_n}(x_n)}e_n\ \mathrm{d}x=I^*.
\end{equation*} 
\item \textbf{Concentration:} There exists a sequence $\{x_n\}_{n\in\mathbb{N}}\subset \mathbb{R}^d$ with the property that for each $\varepsilon>0$, there exists $r>0$ with 
\begin{equation*}
\int_{B_r(x_n)}e_n\ \mathrm{d}x\geq I-\varepsilon,~ \text{ for all }~n\in\mathbb{N}.
\end{equation*} 
\end{itemize}
\end{theorem}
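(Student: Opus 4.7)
I would follow Lions' classical concentration-compactness argument, based on the Lévy concentration function
\[
Q_n(r) := \sup_{x\in\R^d} \int_{B_r(x)} e_n\,\mathrm{d}x.
\]
Each $Q_n$ is nonnegative, non-decreasing in $r$, bounded above by $\int_{\R^d} e_n\,\mathrm{d}x$, and tends to $\int_{\R^d} e_n\,\mathrm{d}x$ as $r\to\infty$. Since $\int_{\R^d} e_n\,\mathrm{d}x\to I$, the family $\{Q_n\}$ is uniformly bounded on $[0,\infty)$. Helly's selection theorem then produces a subsequence (not relabeled) and a non-decreasing function $Q:[0,\infty)\to[0,I]$ with $Q_n(r)\to Q(r)$ at every continuity point of $Q$. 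Setting $I^*:=\lim_{r\to\infty} Q(r)\in[0,I]$, the three alternatives of the theorem correspond respectively to $I^*=0$ (vanishing), $I^*=I$ (concentration), and $0<I^*<I$ (dichotomy).

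If $I^*=0$ then $Q\equiv 0$, so $Q_n(r)\to 0$ at every continuity point and hence, by monotonicity of $Q_n$ in $r$, at every $r>0$; this is precisely the vanishing statement.

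For $I^*=I$ (concentration), I would select continuity points $r_k\nearrow\infty$ with $Q(r_k)>I-2^{-k}$ and, for each $k$ and $n$, centers $x_n^{(k)}\in\R^d$ with $\int_{B_{r_k}(x_n^{(k)})} e_n\,\mathrm{d}x\geq Q_n(r_k)-1/n$. The main obstacle here is that the theorem demands a single center sequence $x_n$ that works for every $\varepsilon>0$ simultaneously. I would take $x_n:=x_n^{(1)}$ and exploit the geometric fact that for $n$ large both $B_{r_1}(x_n)$ and $B_{r_k}(x_n^{(k)})$ carry a mass arbitrarily close to $I$; were they disjoint the total mass would exceed $I+o(1)$, forcing $|x_n^{(k)}-x_n|\leq r_1+r_k$. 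Enlarging $r_k$ by the fixed constant $r_1$ then yields $\int_{B_{r_k+r_1}(x_n)} e_n\,\mathrm{d}x\geq I-\varepsilon$ for any prescribed $\varepsilon>0$ and $n$ sufficiently large, which is the concentration conclusion.

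In the dichotomy case $0<I^*<I$, I would again pick continuity points $R_k\nearrow\infty$ with $Q(R_k)\to I^*$ and centers $x_n^{(k)}$ with $\int_{B_{R_k}(x_n^{(k)})} e_n\,\mathrm{d}x\geq Q_n(R_k)-1/n$; a first diagonal extraction in $(k,n)$ produces one center sequence $x_n\in\R^d$ and an inner radius $M_n\to\infty$ satisfying $\int_{B_{M_n}(x_n)} e_n\,\mathrm{d}x\to I^*$. For the outer radius I would use that $Q\leq I^*$ identically, so for every fixed $R$ one has $\int_{B_R(x_n)} e_n\,\mathrm{d}x\leq Q_n(R)\to Q(R)\leq I^*$; a second diagonal extraction then produces $N_n\to\infty$ with $M_n/N_n\to 0$ and $\int_{B_{N_n}(x_n)} e_n\,\mathrm{d}x\to I^*$. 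The delicate point, and the main technical obstacle of this case, is to calibrate the two diagonal extractions so that the centers $x_n$ are common to both radii, that $N_n$ grows fast enough to ensure $M_n/N_n\to 0$ yet slow enough that $Q_n(N_n)$ stays arbitrarily close to $\sup_r Q(r)=I^*$, thereby preserving the convergence of both integrals to the same value $I^*$.
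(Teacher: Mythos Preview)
The paper does not actually prove Theorem~\ref{cc}; it is stated without proof as the classical concentration--compactness lemma of Lions and then used as a black box in the existence argument. Your proposal is precisely the standard Lions proof via the L\'evy concentration function, and the outline is correct in all essential respects.

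One minor point to be careful with: in the concentration case the theorem as stated requires the inequality $\int_{B_r(x_n)} e_n\,\mathrm{d}x\geq I-\varepsilon$ to hold for \emph{all} $n\in\mathbb{N}$, not merely for $n$ sufficiently large. Your argument produces the bound only for large $n$. This is easily repaired---pass first to a subsequence along which $\int_{\R^d} e_n\,\mathrm{d}x > I-1/n$, and then for each fixed $\varepsilon$ enlarge $r$ further to accommodate the finitely many initial indices---but it is worth making explicit since the quantifier order in the statement is unusual.
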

Interpreting $I$ as a mass, Theorem \ref{cc} says that $\{e_n\}_{n\in\mathbb{N}}$ admits a subsequence for which one of the following occur: The the mass spreads out in $\mathbb{R}^n$ (vanishing), it splits into two parts (dichotomy) or the mass is uniformly concentrated in $\mathbb{R}^n$ (concentration).
\medskip

\subsection{Preliminary results}
In this subsection we will gather some of the results we need in order to apply Theorem \ref{cc}.
\begin{proposition}\label{technical_results}
Let $\phi\in X_\frac{\alpha}{2}$. Then, 
\begin{itemize}
\item[(i)] $\mathcal{L}(\phi)=\frac{1}{2}\norm{\phi}_\frac{\alpha}{2}^2$,
\item[(ii)] for $\frac{4}{5}\leq \alpha \leq 2$, one has the anisotropic Sobolev inequality 
\begin{align*}
\norm{\phi}_{L^3(\R^2)}^3\lesssim \norm{\phi}_{L^2(\R^2)}^\frac{5\alpha-4}{\alpha+2}\norm{\mathrm{D}_x^\frac{\alpha}{2}\phi}_{L^2(\R^2)}^\frac{18-5\alpha}{2(\alpha+2)}\norm{\partial_x^{-1}\partial_y\phi}_{L^2(\R^2)}^\frac{1}{2}.
\end{align*}
In particular $X_{\frac{\alpha}{2}}\subset L^3(\R^2)$ and $\norm{\phi}_{L^3(\R^2)}\lesssim\norm{\phi}_\frac{\alpha}{2}$ for all $\alpha\geq \tfrac{4}{5}$.
\end{itemize}
\end{proposition}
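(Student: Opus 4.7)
Part (i) is a one-line verification from the definitions. Expanding $\mathcal{L}$ and recognising each integrand as the square of the corresponding summand in the $X_{\alpha/2}$-norm, one has
\begin{equation*}
\mathcal{L}(\phi) = \tfrac{1}{2}\!\left(\norm{\phi}_{L^2(\R^2)}^2 + \norm{\mathrm{D}_x^{\alpha/2}\phi}_{L^2(\R^2)}^2 + \norm{\partial_x^{-1}\partial_y\phi}_{L^2(\R^2)}^2\right) = \tfrac{1}{2}\norm{\phi}_{\alpha/2}^2,
\end{equation*}
and no further argument is required.

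Part (ii) is the anisotropic Sobolev inequality of \cite[Lemma 1.1]{linares1}, and my plan is to recover it by combining three ingredients. First, for each fixed $y \in \R$, one applies the one-dimensional fractional Gagliardo--Nirenberg inequality in $x$ to control $\norm{\phi(\cdot,y)}_{L^p_x}$ in terms of $\norm{\phi(\cdot,y)}_{L^2_x}$ and $\norm{\mathrm{D}_x^{\alpha/2}\phi(\cdot,y)}_{L^2_x}$. Second, one uses the transverse regularity $\partial_x^{-1}\partial_y\phi \in L^2(\R^2)$, together with a Sobolev-type bound in the $y$-variable for $\phi$ viewed as an $L^r_x$-valued function of $y$, to pass from fibrewise $L^p_x$ estimates to mixed-norm estimates of the form $L^q_y L^r_x$. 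Third, one applies H\"older's inequality to interpolate between these mixed-norm estimates and recover the isotropic $L^3(\R^2)$ bound. The specific exponents $\tfrac{5\alpha-4}{\alpha+2}$, $\tfrac{18-5\alpha}{2(\alpha+2)}$ and $\tfrac{1}{2}$ then fall out of book-keeping the H\"older conjugates; as a sanity check they sum to $3$, in accordance with the total homogeneity of $\norm{\phi}_{L^3(\R^2)}^3$. The restriction $\tfrac{4}{5}\le\alpha$ is what guarantees that the Gagliardo--Nirenberg exponent on $\norm{\mathrm{D}_x^{\alpha/2}\phi}_{L^2}$ (equivalently the $L^2$-exponent) is non-negative, while $\alpha\le 2$ keeps us within the regime where the 1D embedding step is available.

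The embedding $X_{\alpha/2}\subset L^3(\R^2)$ is then a corollary: each of the three $L^2$-norms on the right-hand side is bounded by $\norm{\phi}_{\alpha/2}$, and since the exponents sum to $3$, one obtains $\norm{\phi}_{L^3(\R^2)}^3 \lesssim \norm{\phi}_{\alpha/2}^3$, whence $\norm{\phi}_{L^3(\R^2)}\lesssim\norm{\phi}_{\alpha/2}$. The main obstacle lies in step (ii), specifically the interplay between the nonlocal fractional derivative $\mathrm{D}_x^{\alpha/2}$ and the transverse regularity encoded by $\partial_x^{-1}\partial_y$, which relies on the zero-mass constraint built into $X_{\alpha/2}$. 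Since this technical work is already carried out in \cite{linares1}, a citation suffices here.
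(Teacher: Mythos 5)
Your proposal is correct and takes essentially the same route as the paper: part (i) is verified directly from the definitions, and part (ii) is resolved by citing \cite[Lemma 1.1]{linares1}, which is exactly what the authors do. The additional sketch of how the anisotropic Sobolev inequality is established (fibrewise Gagliardo--Nirenberg in $x$, mixed norms in $y$, H\"older interpolation) and the sanity checks on the exponents are consistent with the stated inequality, but since you ultimately defer to the citation, the substance of the argument coincides with the paper's.
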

\begin{proof}
Part $(i)$ is immediate while part $(ii)$ can be found in \cite[Lemma 1.1]{linares1}.
\end{proof}
\begin{corollary}\label{positivity}
The minimum $I_\mu$ is positive.
\end{corollary}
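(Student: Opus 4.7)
The plan is to combine the sign constraint $\mathcal{N}(\phi)=\mu>0$ with the anisotropic Sobolev embedding already available in Proposition \ref{technical_results}(ii) to obtain a uniform positive lower bound on $\mathcal{L}(\phi)$ over the constraint set. Concretely, for any admissible $\phi$ I will estimate
\[
6\mu = \int_{\R^2}\phi^3\,\mathrm{d}(x,y) \leq \int_{\R^2}|\phi|^3\,\mathrm{d}(x,y) = \norm{\phi}_{L^3(\R^2)}^3,
\]
and then invoke the embedding $X_{\frac{\alpha}{2}}\hookrightarrow L^3(\R^2)$ in the form $\norm{\phi}_{L^3(\R^2)}\lesssim \norm{\phi}_{\frac{\alpha}{2}}$, which holds for all $\alpha\geq\tfrac{4}{5}$.

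Next I would translate the norm bound into an $\mathcal{L}$-bound using part (i) of Proposition \ref{technical_results}, namely $\norm{\phi}_{\frac{\alpha}{2}}^2 = 2\mathcal{L}(\phi)$. Combining the two yields
\[
6\mu \lesssim \norm{\phi}_{\frac{\alpha}{2}}^3 = \bigl(2\mathcal{L}(\phi)\bigr)^{3/2},
\]
so that $\mathcal{L}(\phi) \gtrsim \mu^{2/3}$ with a constant independent of $\phi$. Taking the infimum over the constraint set immediately gives $I_\mu \gtrsim \mu^{2/3} > 0$.

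There is no real obstacle in this argument: the non-triviality of the bound rests entirely on the Sobolev inequality in Proposition \ref{technical_results}(ii), and the restriction $\alpha>\tfrac{4}{5}$ assumed throughout the section ensures this embedding is available. The only minor point worth mentioning is that the constraint set is non-empty (so that the infimum is taken over something and the statement $I_\mu>0$ is meaningful); this is seen by picking any $\psi\in X_{\frac{\alpha}{2}}$ with $\mathcal{N}(\psi)\neq 0$ and rescaling by a suitable constant $\lambda>0$, using $\mathcal{N}(\lambda\psi)=\lambda^3\mathcal{N}(\psi)$, after possibly flipping the sign to achieve $\mathcal{N}(\lambda\psi)=\mu$.
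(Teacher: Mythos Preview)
Your proposal is correct and follows essentially the same approach as the paper: bound $\mu=\mathcal{N}(\phi)$ by $\norm{\phi}_{L^3}^3$, invoke the anisotropic Sobolev embedding from Proposition~\ref{technical_results}(ii), and translate the resulting norm bound into a lower bound on $\mathcal{L}(\phi)$ via part (i). Your version simply adds a bit more detail (the explicit power $I_\mu\gtrsim\mu^{2/3}$ and the remark on non-emptiness of the constraint set), but the argument is the same.
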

\begin{proof}
By Proposition \ref{technical_results} we have that 
\begin{align*}
\mu=\mathcal{N}(\phi)\lesssim \norm{\phi}_{L^3}^3\lesssim   \norm{\phi}_\frac{\alpha}{2}^3\eqsim  \mathcal{L}(\phi)^\frac{3}{2}.
\end{align*}
\end{proof}
Corollary \ref{positivity} ensures that the minimizer is not given by the trivial solution.
\begin{lemma}\label{embedding_lemma}
For any $\alpha>0$, the space $X_\frac{\alpha}{2}$ is compactly embedded in $L_{\text{loc}}^2(\mathbb{R}^2)$.
\end{lemma}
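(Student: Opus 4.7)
The plan is to verify the Fréchet--Kolmogorov compactness criterion in $L^2(\R^2)$ for sequences of the form $\{\chi u_n\}$. Given a bounded sequence $\{u_n\}_{n\in\N}\subset X_{\frac{\alpha}{2}}$, I first extract by Banach--Alaoglu a weakly convergent subsequence (not relabeled) with limit $u\in X_{\frac{\alpha}{2}}$. Fix an arbitrary cutoff $\chi\in C_c^\infty(\R^2)$; it suffices to show that $\chi u_n \to \chi u$ strongly in $L^2(\R^2)$. Boundedness of $\{\chi u_n\}$ in $L^2(\R^2)$ and tightness are immediate from the compact support of $\chi$, and a standard add-and-subtract argument using the Lipschitz continuity of $\chi$ reduces uniform equicontinuity to the estimate
\[
\sup_{n\in \N} \|u_n(\cdot + h) - u_n\|_{L^2(\R^2)} \longrightarrow 0 \qquad \text{as } |h|\to 0.
\]

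By Plancherel the left-hand side equals $\int_{\R^2}|e^{\ii h\cdot \xi}-1|^2|\hat u_n(\xi)|^2 \mathrm{d}\xi$, and I would split $h=(h_1,h_2)$ into its two coordinate directions. In the $x$-direction, the elementary bound $|e^{\ii h_1\xi_1}-1|^2 \lesssim |h_1|^{\min(\alpha,2)} |\xi_1|^{\min(\alpha,2)}$ together with the uniform control of $\|\mathrm{D}_x^{\alpha/2} u_n\|_{L^2}$ and $\|u_n\|_{L^2}$ (the latter absorbing the low-frequency part $|\xi_1|\leq 1$ when $\alpha>2$) yields a bound vanishing as $|h_1|\to 0$, uniformly in $n$.

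The $y$-direction is the crux, since $X_{\frac{\alpha}{2}}$ does not directly control $\partial_y u_n$. The idea is to interpolate: for any $\theta\in(0,1]$, $|e^{\ii h_2\xi_2}-1|^2 \lesssim |h_2|^{2\theta}|\xi_2|^{2\theta}$. Writing $|\xi_2|^{2\theta}=|\xi_1|^{2\theta}(|\xi_2|/|\xi_1|)^{2\theta}$ and applying Young's inequality with conjugate exponents $p=\alpha/(2\theta)$ and $q=1/\theta$, the choice $\theta=\alpha/(\alpha+2)$ satisfies $1/p+1/q=1$ and gives
\[
|\xi_2|^{2\theta} \lesssim |\xi_1|^{\alpha} + (\xi_2/\xi_1)^2.
\]
Integrating against $|\hat u_n|^2$ bounds the $y$-translation difference by $|h_2|^{2\theta}\bigl(\|\mathrm{D}_x^{\alpha/2}u_n\|_{L^2}^2+\|\partial_x^{-1}\partial_y u_n\|_{L^2}^2\bigr)$, which is uniform in $n$.

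Combining the two directions yields the required equicontinuity, so Fréchet--Kolmogorov supplies a subsequence of $\{\chi u_n\}$ converging strongly in $L^2(\R^2)$; uniqueness of weak limits identifies the limit with $\chi u$, giving $u_n\to u$ in $L^2(K)$ for every compact $K\subset\R^2$. The main obstacle is the $y$-direction bound, where the two heterogeneous contributions to the $X_{\frac{\alpha}{2}}$-norm must be balanced via the Young-type interpolation above; once this is in place the remainder of the argument is essentially mechanical.
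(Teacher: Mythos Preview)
Your argument is correct and follows a genuinely different path from the paper's proof. The paper localises to a ball by cutting off the antiderivative $\varphi_n=\partial_x^{-1}\phi_n$ (using that $\varphi_n\in\dot H^1\subset \mathrm{BMO}\subset L^q_{\mathrm{loc}}$), reduces to a sequence with compact spatial support weakly converging to zero, and then splits frequency space into three regions: large $|\xi_1|$ is controlled by the $\mathrm{D}_x^{\alpha/2}$ term, small $|\xi_1|$ with large $|\xi_2|$ by the $\partial_x^{-1}\partial_y$ term, and the bounded frequency box by dominated convergence using pointwise decay of $\hat\phi_n$ (which follows from compact spatial support). Your route via Fr\'echet--Kolmogorov avoids the antiderivative/BMO step entirely and is more quantitative: the Young-type interpolation with $\theta=\alpha/(\alpha+2)$ in effect shows that $X_{\alpha/2}$ embeds into an anisotropic space with $\theta$ fractional derivatives in $y$, yielding an explicit modulus of $L^2$-continuity $\sim |h_2|^{\alpha/(\alpha+2)}$ in the transverse direction. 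The paper's argument is perhaps more transparent if one already thinks in frequency regions, while yours packages the same trade-off between the $|\xi_1|^\alpha$ and $(\xi_2/\xi_1)^2$ weights into a single inequality and sidesteps the need to first reduce to compactly supported functions before invoking pointwise Fourier convergence.
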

\begin{proof}
The proof follows essentially the lines in \cite[Lemma 3.3]{deBouard}. We include it here for the sake of completeness.

For $\phi\in X_\frac{\alpha}{2}$, let $\varphi=\partial_x^{-1}\phi$. From the definition of $X_\frac{\alpha}{2}$ we find that $\partial_x\varphi,\ \partial_y\varphi\in L^2(\mathbb{R}^2)$, that is, $\varphi\in \dot{H}^1(\mathbb{R}^2)$. %Recall that for $s=n/p$, $\dot{W}^{s,p}(\mathbb{R}^n)$ is continuously embedded in $\text{BMO}(\mathbb{R}^n)$. 
From Poincare's inequality we have that $\dot{H}^1(\mathbb{R}^2)$ is continuously embedded in $\text{BMO}(\mathbb{R}^2)$. It follows from this that $\varphi\in \text{BMO}(\mathbb{R}^2)\subset L_{\text{loc}}^q(\mathbb{R}^2)$ for all $0<q<\infty$.
Let $\{\phi_n\}_{n=1}^\infty$ be a bounded sequence in $X_\frac{\alpha}{2}$. We will show that for any $R>0$ there exists a subsequence $\{\phi_{n_k}\}_{n=1}^\infty$, which converges in $L^2(B_R)$, where $B_R$ is the ball of radius $R$ centered at the origin in $\mathbb{R}^2$. Let $\varphi_n=\partial_x^{-1}\phi_n$. Since we are only interested in convergence in $L^2(B_R)$, we may assume that $\varphi_n$ is supported on $B_{2R}$ by multiplying $\varphi_n$ with a smooth cutoff function $\psi$ such that $\psi\equiv 1$ in $B_R$ and $\text{supp}(\psi)\subset B_{2R}$. It follows then that $\phi_n$ is supported on $B_{2R}$ as well. 

Since $\{\phi_n\}_{n=1}^\infty$ is bounded in $X_\frac{\alpha}{2}$ we can extract a subsequence, which we still denote by $\{\phi_n\}_{n=1}^\infty$, such that $\phi_n\rightharpoonup \phi$, for some $\phi\in X_\frac{\alpha}{2}$. Moreover, by replacing $\phi_n$ with $\phi_n-\phi$, we may assume that $\phi=0$. Our aim is then to show that 
\begin{equation*}
\int_{\mathbb{R}^2}|\phi_n|^2\ \mathrm{d}(x,y)\rightarrow 0,\ ~\text{as }~n\rightarrow \infty.
\end{equation*}

Let $R_1>0$. We have
\begin{align}
\int_{\mathbb{R}^2}|\phi_n|^2\ \mathrm{d}(x,y)&=\int_{\mathbb{R}^2}|\hat{\phi}_n|^2\ \mathrm{d}(\xi_1,\xi_2)\nonumber\\
&=\int_{\{|\xi_1|\leq R_1,\ |\xi_2|\leq R_1^2\}}|\hat{\phi}_n|^2\ \mathrm{d}(\xi_1,\xi_2)+\int_{\{|\xi_1|\geq R_1\}}|\hat{\phi}_n|^2\ \mathrm{d}(\xi_1,\xi_2)\nonumber\\
&\quad +\int_{\{|\xi_1|\leq R_1,\ |\xi_2|\geq R_1^2\}}|\hat{\phi}_n|^2\ \mathrm{d}(\xi_1,\xi_2).\label{embedding_integrals}
\end{align}
We proceed to estimate each integral on the right-hand side of \eqref{embedding_integrals} separately. For the third integral we can write
\begin{align*}
\int_{\{|\xi_1|\leq R_1,\ |\xi_2|\geq R_1^2\}}|\hat{\phi}_n|^2\ \mathrm{d}(\xi_1,\xi_2)&=\int_{\{|\xi_1|\leq R_1,\ |\xi_2|\geq R_1^2\}}\frac{\xi_1^2}{\xi_2^2}|\mathcal{F}(\partial_x^{-1}\partial_y\phi_n)|^2\ \mathrm{d}(\xi_1,\xi_2)\\
&\leq \frac{R_1^2}{R_1^4}\norm{\partial_x^{-1}\partial_y\phi_n}_{L^2(\R^2)}^2\\
&=\frac{1}{R_1^2}\norm{\partial_x^{-1}\partial_y\phi_n}_{L^2(\R^2)}^2
\end{align*}
and for the second one
\begin{align*}
\int_{\{|\xi_1|\geq R_1\}}|\hat{\phi}_n|^2\ \mathrm{d}(\xi_1,\xi_2)&=\int_{\{|\xi_1|\geq R_1\}}\frac{1}{|\xi_1|^\alpha}|\mathcal{F}(\mathrm{D}_x^\frac{\alpha}{2}\phi_n)|^2\ \mathrm{d}(\xi_1,\xi_2)\leq \frac{1}{R_1^\alpha}\norm{\mathrm{D}_x^\frac{\alpha}{2}\phi_n}_{L^2(\R^2)}^2.
\end{align*}
From these estimates we conclude that, given $\varepsilon>0$ we can choose $R_1$ sufficiently large such that
\begin{equation*}
\int_{\{|\xi_1|\geq R_1\}}|\hat{\phi}_n|^2\ \mathrm{d}(\xi_1,\xi_2)+\int_{\{|\xi_1|\leq R_1,\ |\xi_2|\geq R_1^2\}}|\hat{\phi}_n|^2\ \mathrm{d}(\xi_1,\xi_2)<\varepsilon.
\end{equation*}
In order to deal with the first integral, we first note that since $\phi_n\rightharpoonup 0$ in $X_\frac{\alpha}{2}$, we have 
\begin{equation*}
\hat{\phi}_n(\xi_1,\xi_2)=\int_{B_{2R}}\mathrm{e}^{-\mathrm{i}(x\xi_1+y\xi_2)}\phi_n(x,y)\ \mathrm{d}(x,y)\rightarrow 0\ ~\text{as }~n\rightarrow \infty.
\end{equation*}
Moreover, 
\begin{equation*}
|\hat{\phi}_n(\xi_1,\xi_2)|\leq \norm{\phi_n}_{L^1(B_{2R})}\lesssim \norm{\phi_n}_{L^2(B_{2R})}.
\end{equation*}
Since $\{\phi_n\}_{n=1}^\infty$ is bounded in $X_\frac{\alpha}{2}$ we can conclude that $\{\hat{\phi}_n\}_{n=1}^\infty$ is bounded in $L^\infty(\mathbb{R}^2)$, so by the dominated convergence theorem
\begin{equation*}
\int_{\{|\xi_1|\leq R_1,\ |\xi_2|\leq R_1^2\}}|\hat{\phi}_n|^2\ \mathrm{d}(\xi_1,\xi_2)\rightarrow 0,\ \text{as }n\rightarrow \infty.
\end{equation*} 
\end{proof}
% By using Proposition \ref{technical_results} we find that 
% \begin{align*}
% q=\mathcal{N}(\phi)\lesssim \norm{\phi}_{L^3}^3\lesssim   \norm{\phi}_\frac{\alpha}{2}^3\simeq  \mathcal{L}(\phi)^\frac{3}{2}.
% \end{align*}
% Hence, $I_q>0$ for $q>0${\color{orange}, which ensures that the minimizer is not given by the trivial solution.}

Next we prove that $I_\mu$ is subadditive as a function of $\mu$, a property which will be crucial when proving that the dichotomy scenario in Theorem \ref{cc} does not occur. 
\begin{proposition}\label{subadditivity}
The infimum $I_\mu$ is strictly increasing and subadditive as a function of $\mu$, that is
\begin{equation*}
I_{\mu_1+\mu_2}<I_{\mu_1}+I_{\mu_2},~\text{ for all }~\mu_1,\mu_2>0.
\end{equation*}
\end{proposition}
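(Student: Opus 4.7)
The plan is to exploit the fact that $\mathcal{L}$ and $\mathcal{N}$ are homogeneous of degrees $2$ and $3$ respectively, which forces $I_\mu$ to be a pure power of $\mu$. Concretely, for any $\phi\in X_{\alpha/2}$ and $t>0$ the rescaling $\phi\mapsto t\phi$ gives $\mathcal{L}(t\phi)=t^2\mathcal{L}(\phi)$ and $\mathcal{N}(t\phi)=t^3\mathcal{N}(\phi)$. Hence, given any admissible $\phi$ with $\mathcal{N}(\phi)=\mu$ and any target mass $\tilde\mu>0$, the choice $t=(\tilde\mu/\mu)^{1/3}$ produces an admissible competitor for $I_{\tilde\mu}$ with $\mathcal{L}(t\phi)=(\tilde\mu/\mu)^{2/3}\mathcal{L}(\phi)$. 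Taking infima and exchanging the roles of $\mu$ and $\tilde\mu$ yields the scaling identity
\begin{equation*}
I_{\tilde\mu}=\Bigl(\frac{\tilde\mu}{\mu}\Bigr)^{2/3} I_\mu,\qquad \text{so in particular}\qquad I_\mu=\mu^{2/3}\,I_1.
\end{equation*}

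From this closed form the desired properties follow by elementary calculus. Corollary \ref{positivity} guarantees $I_1>0$, so $\mu\mapsto \mu^{2/3}I_1$ is strictly increasing on $(0,\infty)$, giving strict monotonicity. For subadditivity I would reduce the inequality $I_{\mu_1+\mu_2}<I_{\mu_1}+I_{\mu_2}$ to the purely numerical statement
\begin{equation*}
(\mu_1+\mu_2)^{2/3}<\mu_1^{2/3}+\mu_2^{2/3}\qquad \text{for all } \mu_1,\mu_2>0,
\end{equation*}
and set $s=\mu_2/\mu_1>0$ to get $(1+s)^{2/3}<1+s^{2/3}$. Writing $g(s):=1+s^{2/3}-(1+s)^{2/3}$ one has $g(0)=0$ and $g'(s)=\tfrac{2}{3}\bigl(s^{-1/3}-(1+s)^{-1/3}\bigr)>0$ for $s>0$, which closes the argument.

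There is no real obstacle in this proposition, since the two ingredients (scaling plus strict concavity of $t\mapsto t^{2/3}$ through the origin) are standard. The only point that requires a brief check is the scaling identity itself: it is essential that the rescaled function $t\phi$ still belongs to $X_{\alpha/2}$, which is clear since $X_{\alpha/2}$ is a linear space, and that $I_\mu$ is indeed attained or at least finite in the infimum sense, which is immediate from the embedding $X_{\alpha/2}\hookrightarrow L^3(\mathbb{R}^2)$ of Proposition \ref{technical_results}(ii). Everything else is then a direct consequence of homogeneity.
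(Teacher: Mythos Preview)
Your proof is correct and follows essentially the same approach as the paper: both establish the scaling identity $I_\mu=\mu^{2/3}I_1$ via the homogeneity of $\mathcal{L}$ and $\mathcal{N}$, from which monotonicity and strict subadditivity are immediate. In fact you spell out more of the elementary calculus (the function $g(s)$ and the use of $I_1>0$ from Corollary~\ref{positivity}) than the paper does, which simply asserts that the conclusion ``directly follows'' from the scaling identity.
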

\begin{proof}
Let $h\in X_\frac{\alpha}{2}$ be such that $\mathcal{N}(h)=1$ and let $\phi=\mu^\frac{1}{3}h$. Then $N(\phi)=\mu$ and $\mathcal{L}(\phi)=\mu^\frac{2}{3}\mathcal{L}(h)$, which implies
\begin{equation*}
I_\mu=\mu^\frac{2}{3}I_1,
\end{equation*}
from which the statement in the proposition directly follows.
\end{proof}
When applying Theorem \ref{cc} we will be taking integrals over bounded domains. It is therefore useful to consider the norm $\norm{\cdot}_\frac{\alpha}{2}$ restricted to a bounded domain $\Omega\subset\mathbb{R}^2$:
\begin{equation*}
||\phi||_{\frac{\alpha}{2},\Omega}^2=\norm{\phi}_{L^2(\Omega)}^2+\norm{\mathrm{D}_x^\frac{\alpha}{2}\phi}_{L^2(\Omega)}^2+\norm{\partial_x^{-1}\partial_y\phi}_{L^2(\Omega)}^2.
\end{equation*}
We also make the following definition.
\begin{definition}
Let $\Omega$ be a bounded domain in $\mathbb{R}^2$. For $f\in L_{\text{loc}}^1(\mathbb{R}^2)$, let \[
f_\Omega:=f-M_\Omega(f),\] where
$
M_\Omega(f):=\frac{1}{|\Omega|}\int_\Omega f \ \mathrm{d}(x,y)
$ is the mean of $f$ over $\Omega$.
\end{definition}
When proving that the vanishing scenario does not occur we will make use of the following result.

\begin{proposition}\label{localization}
Let $\phi\in X_\frac{\alpha}{2}$, $\varphi=\partial_x^{-1}\phi$ and let $\psi$ be a smooth cutoff function supported on a bounded domain\footnote{The proposition can be generalized to domains, which are given by disjoint unions of type $\Omega$.} $\Omega=\{(x,y)\in \R^2 \mid y\in (a,b), x\in (h_1(y),h_2(y))\}$, for some $a,b\in \R$ and $h_i\in C([a,b])$, $i=1,2$. Define 
\begin{equation*}
F_\Omega(\phi)=\partial_x(\psi(\varphi_\Omega)).
\end{equation*}
Then,
\begin{equation*}
\norm{F_\Omega(\phi)}_{\frac{\alpha}{2}}\lesssim \norm{\phi}_{\frac{\alpha}{2},\Omega}.
\end{equation*}
\end{proposition}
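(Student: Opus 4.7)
The plan is to expand $F_\Omega(\phi)$ by the product rule and bound the three pieces of $\|\cdot\|_{\alpha/2}$ separately. Since the constant $M_\Omega(\varphi)$ is annihilated by $\partial_x$, one has $\partial_x\varphi_\Omega=\phi$, and hence
\begin{equation*}
F_\Omega(\phi)=\partial_x(\psi\varphi_\Omega)=\psi_x\varphi_\Omega+\psi\phi,
\end{equation*}
both terms being supported in $\mathrm{supp}(\psi)\subset\Omega$. The central analytic tool will be Poincar\'e's inequality on the bounded (and essentially Lipschitz) domain $\Omega$: since $\varphi_\Omega$ has vanishing mean on $\Omega$ by construction, one has $\|\varphi_\Omega\|_{L^2(\Omega)}\lesssim\|\nabla\varphi\|_{L^2(\Omega)}$, and using $\partial_x\varphi=\phi$ together with $\partial_y\varphi=\partial_x^{-1}\partial_y\phi$ this is controlled by $\|\phi\|_{\alpha/2,\Omega}$.

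For the $L^2(\R^2)$ contribution to $\|F_\Omega(\phi)\|_{\alpha/2}$, the estimate follows directly from the decomposition above together with $\|\psi\|_{L^\infty},\|\psi_x\|_{L^\infty}<\infty$ and the Poincar\'e bound. For the transverse contribution, I would use the identity $\partial_x^{-1}\partial_y F_\Omega(\phi)=\partial_y(\psi\varphi_\Omega)=\psi_y\varphi_\Omega+\psi\,\partial_x^{-1}\partial_y\phi$, after which the same argument applies verbatim.

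The hard part is the fractional contribution $\|D_x^{\alpha/2}F_\Omega(\phi)\|_{L^2(\R^2)}$, because $D_x^{\alpha/2}$ is nonlocal in $x$ while $\|\phi\|_{\alpha/2,\Omega}$ measures $D_x^{\alpha/2}\phi$ only in $L^2(\Omega)$. I would address this through the commutator decomposition
\begin{equation*}
D_x^{\alpha/2}(\psi g)=\psi\,D_x^{\alpha/2}g+[D_x^{\alpha/2},\psi]g,\qquad g\in\{\varphi_\Omega,\phi\},
\end{equation*}
noting that $\psi D_x^{\alpha/2}g$ is automatically supported in $\Omega$ and bounded by $\|\psi\|_{L^\infty}\|D_x^{\alpha/2}g\|_{L^2(\Omega)}$. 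The commutator piece is controlled by a Kato--Ponce-type estimate: for $\alpha<2$ this can be read off directly from the pointwise representation
\begin{equation*}
[D_x^{\alpha/2},\psi]g(x,y)=c_\alpha\,\mathrm{p.v.}\int_\R\frac{\psi(x,y)-\psi(x',y)}{|x-x'|^{1+\alpha/2}}\,g(x',y)\,dx',
\end{equation*}
combined with $|\psi(x,y)-\psi(x',y)|\lesssim|x-x'|$ near the diagonal (giving a locally integrable kernel of order $|x-x'|^{-\alpha/2}$) and the uniform bound and compact $x$-support of $\psi$ in the far field (giving decay like $|x-x'|^{-1-\alpha/2}$). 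The subtle step is to keep the resulting estimate controlled by the localized norm of $g$ rather than a global $L^2$-norm: this exploits crucially that $\psi$ is supported strictly inside $\Omega$, so the kernel forces $x'$ to lie near $\Omega$, and the remaining far-field contribution is summable. Applying the resulting bound with $g=\varphi_\Omega$ (using Poincar\'e) and $g=\phi$ combines with the other two components to yield $\|F_\Omega(\phi)\|_{\alpha/2}\lesssim\|\phi\|_{\alpha/2,\Omega}$.
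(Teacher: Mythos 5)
Your treatment of the $L^2$- and transverse components coincides with the paper's (product rule plus Poincar\'e on $\Omega$), and your instinct that the fractional component is the delicate one is correct. But your argument for that component has two concrete problems. First, after expanding $F_\Omega(\phi)=\psi_x\varphi_\Omega+\psi\phi$ you apply the commutator splitting with $g=\varphi_\Omega$, which produces the term $\psi_x\,\mathrm{D}_x^{\alpha/2}\varphi_\Omega$. Since $\varphi=\partial_x^{-1}\phi$, this is a derivative of $\phi$ of \emph{negative} order $\tfrac{\alpha}{2}-1$, and $\|\mathrm{D}_x^{\alpha/2}\varphi_\Omega\|_{L^2(\Omega)}$ is not one of the three quantities making up $\|\phi\|_{\frac{\alpha}{2},\Omega}$, nor is it controlled by them through any local interpolation. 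The paper sidesteps this by never separating $\partial_x$ from $\mathrm{D}_x^{\alpha/2}$: it applies the fractional Leibniz rule to the full operator $\mathrm{D}_x^{\alpha/2}\partial_x$ acting on the product $\psi\varphi_\Omega$, so that the top-order term is $\psi\,\mathrm{D}_x^{\alpha/2}\partial_x\varphi_\Omega=\psi\,\mathrm{D}_x^{\alpha/2}\phi$ (exactly the quantity appearing in the localized norm), while the lower-order term carries $\mathrm{D}_x^{\alpha/2}\partial_x\psi$ in $L^\infty$ against $\varphi_\Omega$ in $L^2(\Omega)$, which Poincar\'e handles.

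Second, and more seriously, the claimed localization of the commutator fails. The kernel of $[\mathrm{D}_x^{\alpha/2},\psi]$ is $c_\alpha\bigl(\psi(x,y)-\psi(x',y)\bigr)|x-x'|^{-1-\alpha/2}$; for $x\in\mathrm{supp}(\psi)$ and $x'$ outside $\Omega$ it equals $c_\alpha\,\psi(x,y)|x-x'|^{-1-\alpha/2}$, which is nonzero and only decays polynomially. Hence $[\mathrm{D}_x^{\alpha/2},\psi]g$ genuinely depends on $g$ on all of $\R$, and the Schur-type bound you invoke yields $\|g\|_{L^2(\R^2)}$ on the right, not $\|g\|_{L^2(\Omega)}$. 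Indeed, taking $g$ to vanish on $\Omega$ but to be a large bump just outside gives $\|g\|_{L^2(\Omega)}=0$ while $[\mathrm{D}_x^{\alpha/2},\psi]g=-\psi\,\mathrm{D}_x^{\alpha/2}g\neq 0$, so no inequality of the form $\|[\mathrm{D}_x^{\alpha/2},\psi]g\|_{L^2}\lesssim\|g\|_{L^2(\Omega)}$ can hold. Since the entire point of the proposition is that only the localized norm appears on the right (it is summed over a cover of $\R^2$ in the proof of Proposition \ref{prop:vanishing}), a residual global $L^2$-norm cannot be tolerated. A smaller point: the pointwise principal-value representation you use requires $\alpha<2$, whereas the proposition is also invoked for $\alpha\geq 2$. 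The repair is to follow the paper's route: estimate $\|\mathrm{D}_x^{\alpha/2}\partial_x(\psi\varphi_\Omega)\|_{L^2}$ slice-by-slice in $y$ via the fractional Leibniz rule, keeping the rough factor $\varphi_\Omega$ in $L^2$ and all derivatives of $\psi$ in $L^\infty$, and conclude with Poincar\'e's inequality.
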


\begin{proof}
We have that 
\begin{equation*}
\norm{F_\Omega(\phi)}_{\frac{\alpha}{2}}^2=\norm{\partial_x(\psi\varphi_\Omega)}_{L^2(\R^2)}^2+\norm{\partial_y(\psi\varphi_\Omega)}_{L^2(\R^2)}^2+\norm{\mathrm{D}_x^\frac{\alpha}{2}\partial_x(\psi\varphi_\Omega)}_{L^2(\R^2)}^2.
\end{equation*}
We consider each of these terms separately. The first term can be estimated as
\begin{equation*}
\norm{\partial_x(\psi\varphi_\Omega)}_{L^2(\R^2)}=\norm{\psi_x\varphi_\Omega+\psi\phi}_{L^2(\R^2)}\leq \norm{\psi_x\varphi_\Omega}_{L^2(\R^2)}+\norm{\psi\phi}_{L^2(\R^2)},
\end{equation*}
and $\norm{\psi\phi}_{L^2(\R^2)}\lesssim \norm{\phi}_{L^2(\Omega)}$, while
\begin{align*}
\norm{\psi_x\varphi_\Omega}_{L^2(\R^2)}\lesssim \norm{\varphi_\Omega}_{L^2(\Omega)}\lesssim \norm{\varphi_x}_{L^2(\Omega)}+\norm{\varphi_y}_{L^2(\Omega)}=\norm{\phi}_{L^2(\Omega)}+\norm{\partial_x^{-1}\partial_y\phi}_{L^2(\Omega)},
\end{align*}
where we used Poincaré's inequality and the definition $\varphi = \partial_x^{-1}\phi$.
Hence,
\begin{equation}\label{eq:11}
\norm{\partial_x(\psi\varphi_\Omega)}_{L^2}^2\lesssim \norm{\phi}_{L^2(\Omega)}+\norm{\partial_x^{-1}\partial_y\phi}_{L^2(\Omega)}
\end{equation}
and in the same way we find  
\begin{equation}\label{eq:12}
\norm{\partial_y(\psi\varphi_\Omega)}_{L^2}^2\lesssim \norm{\phi}_{L^2(\Omega)}+\norm{\partial_x^{-1}\partial_y\phi}_{L^2(\Omega)}.
\end{equation}
% If $\Omega=B_R\subset \R^2$ is a ball of with radius $R>0$, then
% \[
% \Omega = \{(x,y) \in \R^2 \mid |y|<R, |y|<\sqrt{R^2-y^2}\}
% \]
% (More general regions are possible, but they make the splitting of the integral below more complicated.)
%For any $|y|<R$, set $\Omega_y:=(-\sqrt{R^2-y^2},\sqrt{R^2-y^2})$.
Recall that $\Omega=\{(x,y)\in \R^2 \mid y\in (a,b), x\in (h_1(y),h_2(y))\}$ and
set $\Omega_y:=(h_1(y),h_2(y))$. By the Leibniz' rule for fractional derivatives (see e.g. \cite[Theorem 7.6.1]{Grafakos}), we can estimate
\begin{align*}
\norm{\mathrm{D}_x^\frac{\alpha}{2}\partial_x(\psi\varphi_\Omega)}_{L^2(\R^2)}^2 &= \int_{\R} \norm{\mathrm{D}_x^\frac{\alpha}{2}\partial_x(\psi\varphi_\Omega)(\cdot, y)}_{L^2(\Omega_y)}^2\, \mathrm{d}y\\
&\lesssim \int_{a}^b \norm{\mathrm{D}_x^\frac{\alpha}{2}\partial_x \psi(\cdot, y)}_{L^\infty(\Omega_y)}^2\norm{\varphi_\Omega(\cdot, y)}_{L^2(\Omega_y)}^2+ \norm{\psi(\cdot, y)}_{L^\infty(\Omega_y)}^2 \norm{\mathrm{D}_x^\frac{\alpha}{2}\partial_x \varphi_\Omega(\cdot, y)}_{L^2(\Omega_y)}^2 \mathrm{d}y.
\end{align*}
Using that $\psi$ is a smooth function, we conclude by Poincaré's inequality that 
\begin{align}\label{eq:13}
\begin{split}
\norm{\mathrm{D}_x^\frac{\alpha}{2}\partial_x(\psi\varphi_\Omega)}_{L^2(\R^2)}^2 &\lesssim \norm {\varphi_{\Omega}}_{L^2(\Omega)}^2 + \norm{\mathrm{D}_x^\frac{\alpha}{2}\partial_x \varphi_\Omega}_{L^2(\Omega)}^2\\ 
&\lesssim \norm{\varphi_x}_{L^2(\Omega)}+\norm{\varphi_y}_{L^2(\Omega)}+ \norm{\mathrm{D}_x^\frac{\alpha}{2}\partial_x \varphi}_{L^2(\Omega)}^2 \\
&=\norm{\phi}_{L^2(\Omega)}+\norm{\partial_x^{-1}\partial_y\phi}_{L^2(\Omega)}+ \norm{\mathrm{D}_x^\frac{\alpha}{2}\phi}_{L^2(\Omega)}^2
\end{split}
\end{align}
Gathering \eqref{eq:11}, \eqref{eq:12}, and \eqref{eq:13}, we have shown that 
\begin{equation*}
\norm{F_\Omega(\phi)}_{\frac{\alpha}{2}}\lesssim \norm{\phi}_{\frac{\alpha}{2},\Omega}.
\end{equation*}

\end{proof}

Eventually, when excluding the dichotomy scenario we will make use of the following lemma, which provides a Poincaré-like inequality. 
\begin{lemma}[\cite{deBouard0}, Lemma 3.1]\label{deBourd_Saut_lemma}

Let $2\leq p<\infty $ and $R>0$. Then there exists a positive constant $C$ such that for all $f\in L_\text{loc}^1(\mathbb{R}^2)$ one has that
\[
\|f_{A_{2R,R}}\|_{L^p(A_{2R,R})} \leq C R^\frac{2}{p}\|\nabla f\|_{L^2(A_{2R,R})},
\]
where $A_{2R,R}\subset \R^2$ denotes the annulus centered at the origin of radii $2R>R$. 
% \begin{equation*}
% \left(\int_{\Omega_R(x_0,y_0)}|f_{\Omega_R(x_0,y_0)}|^q\ \mathrm{d}(x,y)\right)^\frac{1}{q}\leq C R^\frac{2}{q}\left(\int_{\Omega_R(x_0,y_0)}|\nabla f|^2\ \mathrm{d}(x,y)\right)^\frac{1}{2},
% \end{equation*}
% where 
% \begin{equation*}
% \Omega_R(x_0,y_0)=\{(x,y)\in\mathbb{R}^2\colon R<|(x,y)-(x_0,y_0)|<2R\}.
% \end{equation*}
\end{lemma}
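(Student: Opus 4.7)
The plan is to reduce the inequality to the unit-scale case $R=1$ by a dilation, and then to invoke the standard Poincar\'e--Wirtinger inequality combined with a Sobolev embedding on the bounded Lipschitz domain $A_{2,1}$.

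First I would rescale by setting $g(x):=f(Rx)$, so that $g$ is defined on $A_{2,1}$ whenever $f$ is defined on $A_{2R,R}$. A direct change of variables gives $M_{A_{2,1}}(g)=M_{A_{2R,R}}(f)$, so $g_{A_{2,1}}(x)=f_{A_{2R,R}}(Rx)$, whence
\[
\|g_{A_{2,1}}\|_{L^p(A_{2,1})}=R^{-2/p}\,\|f_{A_{2R,R}}\|_{L^p(A_{2R,R})},
\]
while $\nabla g(x)=R\,\nabla f(Rx)$ combined with the change of variables yields $\|\nabla g\|_{L^2(A_{2,1})}=\|\nabla f\|_{L^2(A_{2R,R})}$, the $L^2$ norm of the gradient being scale invariant in two dimensions. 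Consequently, the claimed estimate on $A_{2R,R}$ is equivalent, up to the explicit factor $R^{2/p}$, to the $R=1$ version on $A_{2,1}$, so it suffices to treat the latter.

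For $R=1$, write $\Omega:=A_{2,1}$ and apply the Poincar\'e--Wirtinger inequality on this bounded Lipschitz domain to obtain $\|f_\Omega\|_{L^2(\Omega)}\lesssim\|\nabla f\|_{L^2(\Omega)}$. Since the ambient dimension is $n=2$, the Sobolev embedding $H^1(\Omega)\hookrightarrow L^p(\Omega)$ holds for every $2\le p<\infty$; applying it to $f_\Omega$ (which has the same distributional gradient as $f$) gives
\[
\|f_\Omega\|_{L^p(\Omega)}\lesssim \|f_\Omega\|_{L^2(\Omega)}+\|\nabla f_\Omega\|_{L^2(\Omega)}=\|f_\Omega\|_{L^2(\Omega)}+\|\nabla f\|_{L^2(\Omega)}\lesssim \|\nabla f\|_{L^2(\Omega)},
\]
as desired.

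The only nontrivial ingredient is the Poincar\'e--Wirtinger inequality on the annulus; this is standard for bounded connected Lipschitz domains and can be established by the usual compactness-contradiction argument using Rellich--Kondrachov, so I do not anticipate any substantive obstacle. The chief virtue of the proposed proof lies in the scaling step, which pins down the sharp $R^{2/p}$ dependence on the radius that will be needed when excluding dichotomy in the concentration--compactness argument.
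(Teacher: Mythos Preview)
The paper does not supply its own proof of this lemma; it merely cites \cite{deBouard0}, Lemma~3.1. Your argument---reduction to the unit annulus by dilation, followed by Poincar\'e--Wirtinger and the two-dimensional Sobolev embedding $H^1(\Omega)\hookrightarrow L^p(\Omega)$ for $2\le p<\infty$---is correct and is the standard route to such estimates; in particular, the scaling step yields a constant $C$ independent of $R$, which is precisely what is needed in the dichotomy argument where the lemma is applied with $R=M_n\to\infty$.
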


\subsection{Existence of minimizers}
Let $\{\phi_n\}_{n\in\mathbb{N}}\subset X_\frac{\alpha}{2}$ be a minimizing sequence for the constrained minimization problem \eqref{min_problem}, that is, $\mathcal{N}(\phi_n)=\mu$ and $\lim_{n\rightarrow \infty} \mathcal{L}(\phi_n)=I_\mu$. We will apply Theorem \ref{cc} to the sequence 
\begin{equation*}
e_n=\frac{1}{2}\left(\phi_n^2+(\mathrm{D}_x^\frac{\alpha}{2}\phi_n)^2+(\partial_x^{-1}\partial_y\phi_n)^2\right).
\end{equation*}
Recall that
\begin{equation*}
\int_{\mathbb{R}^2}e_n\ \mathrm{d}(x,y)=\mathcal{L}(\phi_n).
\end{equation*}
We will show in Proposition \ref{prop:vanishing} and Proposition \ref{prop:dichotomy} that the vanishing and dichotomy scenarios in Theorem \ref{cc} does not occur and then use the concentration scenario to construct a convergent subsequence of $\{\phi_n\}_{n\in\mathbb{N}}$, converging to a solution $\phi$ of \eqref{min_problem}. 

\medskip

\begin{proposition}[Excluding "vanishing"] \label{prop:vanishing}
No subsequence of $\{e_n\}_{n\in\mathbb{N}}$ has the \emph{vanishing} property in Theorem \ref{cc}.
\end{proposition}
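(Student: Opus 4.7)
The plan is to argue by contradiction. Suppose, after passing to a subsequence, that $\{e_n\}_{n\in \N}$ has the vanishing property from Theorem \ref{cc}. I would then establish that $\phi_n\to 0$ in $L^3(\R^2)$, which contradicts
\[
0<6\mu = 6\mathcal{N}(\phi_n)=\int_{\R^2}\phi_n^3\,\mathrm{d}(x,y)\leq \norm{\phi_n}_{L^3(\R^2)}^3.
\]

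To convert vanishing of the local $X_\frac{\alpha}{2}$-mass into smallness of the $L^3$-norm, I would first fix $r>0$ and cover $\R^2$ by a regular grid of open squares $\{Q_k\}_{k\in\N}$ of side $r$; each $Q_k$ is a domain of the type permitted by Proposition \ref{localization}. I then take concentric enlargements $\widetilde Q_k$ of side $2r$ and pick smooth cutoffs $\psi_k$ supported in $\widetilde Q_k$ with $\psi_k\equiv 1$ on $Q_k$. The family $\{\widetilde Q_k\}$ has bounded overlap depending only on the geometry, and every $\widetilde Q_k$ sits inside a ball of radius $r\sqrt 2$ around its centre $x_k$.

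Applying Proposition \ref{localization} to the pair $(\widetilde Q_k,\psi_k)$ produces $F_{\widetilde Q_k}(\phi_n)=\partial_x(\psi_k(\varphi_n)_{\widetilde Q_k})\in X_\frac{\alpha}{2}(\R^2)$ satisfying $\norm{F_{\widetilde Q_k}(\phi_n)}_\frac{\alpha}{2}\lesssim \norm{\phi_n}_{\frac{\alpha}{2},\widetilde Q_k}$. On $Q_k$ we have $\psi_k\equiv 1$ and $\partial_x\psi_k=0$, so a direct computation gives $F_{\widetilde Q_k}(\phi_n)=\partial_x \varphi_n =\phi_n$ pointwise on $Q_k$. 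Combining this identification with the embedding $X_\frac{\alpha}{2}(\R^2)\hookrightarrow L^3(\R^2)$ from Proposition \ref{technical_results}(ii) (valid since $\alpha>\tfrac{4}{5}$) yields, for each $k$,
\[
\norm{\phi_n}_{L^3(Q_k)}^3\leq \norm{F_{\widetilde Q_k}(\phi_n)}_{L^3(\R^2)}^3\lesssim \norm{F_{\widetilde Q_k}(\phi_n)}_{\frac{\alpha}{2}}^3\lesssim \norm{\phi_n}_{\frac{\alpha}{2},\widetilde Q_k}^3.
\]

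Summing over $k$, factoring out one supremum, and using that by bounded overlap $\sum_k\norm{\phi_n}_{\frac{\alpha}{2},\widetilde Q_k}^2\lesssim \mathcal{L}(\phi_n)$, I arrive at
\[
\norm{\phi_n}_{L^3(\R^2)}^3\lesssim \sup_k \norm{\phi_n}_{\frac{\alpha}{2},\widetilde Q_k}\cdot \mathcal{L}(\phi_n).
\]
The factor $\mathcal{L}(\phi_n)$ is uniformly bounded since $\{\phi_n\}$ is minimizing, and from $\widetilde Q_k\subset B_{r\sqrt 2}(x_k)$ and $\norm{\phi_n}_{\frac{\alpha}{2},\widetilde Q_k}^2=2\int_{\widetilde Q_k}e_n$ the vanishing hypothesis drives the supremum to $0$, delivering the contradiction. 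The part that I expect to require the most care is choosing the cover and cutoffs so that both $F_{\widetilde Q_k}(\phi_n)=\phi_n$ on $Q_k$ and the bounded-overlap bookkeeping hold simultaneously; the Sobolev embedding step and the use of vanishing inside a ball are then immediate.
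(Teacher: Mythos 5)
Your proposal is correct and follows essentially the same route as the paper: cover $\R^2$ by uniformly bounded sets with bounded overlap, combine Proposition \ref{localization} with the anisotropic Sobolev embedding of Proposition \ref{technical_results}(ii) to control $\norm{\phi_n}_{L^3}^3$ by $\sup_k\norm{\phi_n}_{\frac{\alpha}{2},\widetilde Q_k}\cdot\norm{\phi_n}_{\frac{\alpha}{2}}^2$, and let vanishing kill the supremum to contradict $\mathcal{N}(\phi_n)=\mu>0$. The only difference is cosmetic (squares with cutoffs $\equiv 1$ on the inner square instead of balls with a partition of unity), and if anything your choice makes the identification $F_{\widetilde Q_k}(\phi_n)=\phi_n$ on $Q_k$ and the reassembly of the $L^3$-norm slightly cleaner than the paper's version.
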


\begin{proof}
Assume for a contradiction that vanishing does occur, that is
\begin{equation*}
\lim_{n\rightarrow\infty}\left(\sup_{(x,y)\in\mathbb{R}^2}\int_{B_r(x,y)}e_n\ \mathrm{d}(x,y)\right)=0
\end{equation*}
for each $r>0$. Let us cover $\mathbb{R}^2$ with balls $B_{1,j}$, $j\in\mathbb{N}$, of radius $1$ such that each point in $\mathbb{R}^2$ is contained in at most three balls. Let $\{\psi_j\}_{n\in\mathbb{N}}$ be a smooth partition of unity such that $\text{supp}(\psi_j)\subset B_{1,j}$. Using Proposition \ref{technical_results} (ii) and Proposition \ref{localization} we find 
\begin{align*}
|\mathcal{N}(\phi_n)|&\lesssim \norm{\phi_n}_{L^3(\R^2)}^3\\
&\lesssim \sum_{j\in\mathbb{N}}\norm{F_{B_{1,j}}(\phi_n)}_{L^3(\R^2)}^3\\
&\leq\sup_{j\in\mathbb{N}}\norm{F_{B_{1,j}}(\phi_n)}_{L^3(\R^2)}\sum_{j\in\mathbb{N}}\norm{F_{B_{1,j}}(\phi_n)}_{L^3(\R^2)}^2\\
&\lesssim \sup_{j\in\mathbb{N}}\norm{F_{B_{1,j}}(\phi_n)}_{\frac{\alpha}{2}}\sum_{j\in\mathbb{N}}\norm{F_{B_{1,j}}(\phi_n)}_{\frac{\alpha}{2}}^2\\
&\lesssim \sup_{j\in\mathbb{N}}\norm{\phi_n}_{\frac{\alpha}{2},B_{1,j}}\sum_{j\in\mathbb{N}}\norm{\phi_n}_{\frac{\alpha}{2}, B_{1,j}}^2\\
&\lesssim \sup_{j\in\mathbb{N}}\left(\int_{B_{1,j}}e_n\ \mathrm{d}(x,y)\right)^\frac{1}{2}\norm{\phi_n}_\frac{\alpha}{2}^2.
\end{align*}
By letting $n\rightarrow \infty$ we get  $\mathcal{N}(\phi_n)\rightarrow 0$, which contradicts the fact that $\mathcal{N}(\phi_n)=\mu>0$.
\end{proof}

\begin{proposition}[Excluding "dichotomy"] \label{prop:dichotomy}
No subsequence of $\{e_n\}_{n\in\mathbb{N}}$ has the \emph{dichotomy} property in Theorem \ref{cc}.
\end{proposition}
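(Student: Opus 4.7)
The plan is to proceed by contradiction, exploiting the strict subadditivity of $I_\mu$ established in Proposition~\ref{subadditivity}. Suppose a subsequence of $\{e_n\}_{n \in \N}$ exhibits the dichotomy scenario with parameters $x_n \in \R^2$, $M_n < N_n$ satisfying $M_n, N_n \to \infty$ and $M_n/N_n \to 0$, and $I^* \in (0, I_\mu)$. By translation invariance I may assume $x_n = 0$. Since both $\int_{B_{M_n}}e_n\,\mathrm{d}(x,y)$ and $\int_{B_{N_n}}e_n\,\mathrm{d}(x,y)$ tend to $I^*$, the annulus $A_n := B_{N_n}\setminus B_{M_n}$ carries vanishing energy, $\int_{A_n} e_n \,\mathrm{d}(x,y) \to 0$, while $\R^2\setminus B_{N_n}$ carries the remaining energy $I_\mu - I^*$.

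First I would split $\phi_n$ into an inner and an outer piece $\phi_n^1, \phi_n^2 \in X_{\alpha/2}$, essentially supported on $B_{M_n}$ and on $\R^2 \setminus B_{N_n}$ respectively. Concretely, pick intermediate radii $M_n < \widetilde{M}_n < \widetilde{N}_n < N_n$ with $\widetilde{M}_n - M_n \to \infty$ and $N_n - \widetilde{N}_n \to \infty$, and smooth cutoffs $\psi_n^1, \psi_n^2$ with $\psi_n^1 \equiv 1$ on $B_{M_n}$, $\operatorname{supp}\psi_n^1 \subset B_{\widetilde{M}_n}$, $\psi_n^2 \equiv 1$ on $\R^2 \setminus B_{N_n}$, and $\operatorname{supp}\psi_n^2 \subset \R^2 \setminus B_{\widetilde{N}_n}$. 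Setting $\varphi_n := \partial_x^{-1}\phi_n$ and mirroring the potential-level truncation from Proposition~\ref{localization}, define $\phi_n^i := \partial_x\bigl(\psi_n^i(\varphi_n - c_n^i)\bigr)$ for suitable mean-correction constants $c_n^i$, using a suitable extension of Proposition~\ref{localization} to handle the unbounded outer domain; the resulting bound is $\|\phi_n^i\|_{\alpha/2} \lesssim \|\phi_n\|_{\alpha/2, \operatorname{supp}\psi_n^i}$.

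Next I would verify the asymptotic identities
\begin{align*}
\mathcal{L}(\phi_n^1) + \mathcal{L}(\phi_n^2) &= \mathcal{L}(\phi_n) + o(1),\\
\mathcal{N}(\phi_n^i) &\to \mu_i, \qquad \mu_1 + \mu_2 = \mu.
\end{align*}
The first uses that $\operatorname{supp}\phi_n^1$ and $\operatorname{supp}\phi_n^2$ are separated by the transition annuli on which $\int e_n$ vanishes, with Lemma~\ref{deBourd_Saut_lemma} handling the $\partial_x^{-1}\phi_n$ contributions and the fractional-derivative cross-terms being estimated as in the proof of Proposition~\ref{localization}. The second follows from the anisotropic Sobolev inequality of Proposition~\ref{technical_results}(ii) applied on $A_n$, which gives $\|\phi_n\|_{L^3(A_n)}^3 \lesssim \|\phi_n\|_{\alpha/2, A_n}^3 \to 0$. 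After renormalising $\tilde\phi_n^i := (\mu_i/\mathcal{N}(\phi_n^i))^{1/3} \phi_n^i$ (with signs flipped via $\phi \mapsto -\phi$ if $\mu_i < 0$, and the corner case $\mu_i = 0$ handled separately, since then $\mathcal{L}(\tilde\phi_n^j) \geq I_\mu$ for the nontrivial index $j$ while $\mathcal{L}(\phi_n^j) \to \min\{I^*, I_\mu - I^*\} < I_\mu$), strict subadditivity from Proposition~\ref{subadditivity} yields
\[
I_\mu = \lim_{n\to\infty}\bigl(\mathcal{L}(\phi_n^1) + \mathcal{L}(\phi_n^2)\bigr) \geq I_{|\mu_1|} + I_{|\mu_2|} > I_{|\mu_1| + |\mu_2|} \geq I_\mu,
\]
the desired contradiction.

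The hardest part will be the nonlocality of $\mathrm{D}_x^{\alpha/2}$ and $\partial_x^{-1}$: a naive multiplicative cutoff of $\phi_n$ destroys the norm identities through cross-terms that cannot be controlled pointwise, which is why the truncation must be performed on the potential $\varphi_n$ rather than on $\phi_n$ itself. The Leibniz-type fractional product estimate from the proof of Proposition~\ref{localization}, combined with the Poincaré-type bound of Lemma~\ref{deBourd_Saut_lemma} on the thickening annuli $B_{\widetilde{M}_n}\setminus B_{M_n}$ and $B_{N_n}\setminus B_{\widetilde{N}_n}$, is what renders the cross-terms asymptotically negligible; the intermediate radii must therefore be chosen so that both transition annuli lie inside the vanishing-energy region $A_n$ along the subsequence.
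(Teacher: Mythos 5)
Your proposal is correct and follows essentially the same route as the paper: truncation at the level of the potential $\varphi_n=\partial_x^{-1}\phi_n$ with mean correction, control of the transition annuli via the vanishing energy on $A_{N_n,M_n}$ together with Lemma~\ref{deBourd_Saut_lemma} and the fractional Leibniz rule, the anisotropic Sobolev inequality for the $\mathcal{N}$-splitting, and strict subadditivity of $I_\mu$ for the final contradiction (the paper fixes the intermediate radii as $2M_n$ and $N_n/2$ rather than general $\widetilde M_n,\widetilde N_n$). The only small caveat is that your corner case $\mu_i=0$ uses the individual limits $\mathcal{L}(\phi_n^1)\to I^*$ and $\mathcal{L}(\phi_n^2)\to I_\mu-I^*$, which is the stronger statement the paper actually proves, rather than only the sum identity you display.
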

\begin{proof}
Throughout the proof we will use $B_R$ to denote the ball in $\mathbb{R}^2$ centered at the origin of radius $R>0$ and $A_{R_1,R_2}$ to denote the annulus centered at the origin of radii $R_1>R_2>0$.
\medskip

Assume for a contradiction that the dichotomy scenario in Theorem \ref{cc} occurs, that is there exist sequences $\{(x_n,y_n)\}_{n\in \N}\subset \R^2, \{M_n\}_{n\in \N}, \{N_n\}_{n\in \N}\subset \R$ and $I^* \in (0,I_\mu)$ with $M_n, N_n, \frac{N_n}{M_n} \to \infty$ for $n\to \infty$ and 
\begin{equation}\label{eq:d}
\lim_{n\rightarrow \infty}\int_{B_{M_n}(x_n,y_n)}e_n\ \mathrm{d}x=I^*,\quad \lim_{n\rightarrow \infty}\int_{B_{N_n}(x_n,y_n)}e_n\ \mathrm{d}x=I^*.
\end{equation} 
We will show that this leads to a contradiction, by proving that provided \eqref{eq:d} holds, there exists two sequences $\{\omega_n^{(1)}\}_{n\in \N}, \{\omega_n^{(2)}\}_{n\in \N}$, which have in the limit $n\to \infty$ disjoint support and
\begin{itemize}
\item[(i)]$ \mathcal{N}(\omega_n^{(1)}) + \mathcal{N}(\omega_n^{(2)}) -\mathcal{N}(\omega_n)\to 0$,
\item[(ii)] $\mathcal{L}(\omega_n^{(1)})\to I^*$ and $\mathcal{L}(\omega_n^{(2)})\to (I_\mu-I^*)$,
\end{itemize}
where $\omega_n=\phi_n(\cdot + (x_n,y_n))$ is the shift of $\phi_n$ by $(x_n,y_n)$. We shift the function $\phi_n$ for reasons of convenience in order to work with balls and annuli centered at the origin instead of at $(x_n,y_n)$. Notice that if (i) and (ii) hold we obtain a contradiction due to the subadditivity of the $I_\mu$ stated in Proposition \ref{subadditivity}: Set
\[
\mu_{1,n}:=  \mathcal{N}(\omega_n^{(1)})\qquad \mbox{and}\qquad \mu_{2,n}:=  \mathcal{N}(\omega_n^{(2)}),
\]
and $\mu_i:=\lim_{n\to \infty}\mu_{i,n}$ for $i=1,2$. Then (i) implies that $\mu_1+\mu_2=\mu$, since $\mathcal{N}(\omega_n)=\mu$ for all $n\in \N$. First we show that  $\mu_1\neq 0$. If $\mu_1= 0$, then $\mu_2= \mu$. By setting
\[
\tilde \omega_n^{(2)}:= \left(\frac{\mu}{\mu_{2,n}}\right)^\frac{1}{3}\omega_n^{(2)}
\]
we find  $\mathcal{N}(\tilde \omega_n^{(2)})=\mu$ for all $n\in \N$ and
\[
\left|\mathcal{L}(\tilde \omega_n^{(2)})-\mathcal{L}( \omega_n^{(2)})\right|\to 0 \qquad \mbox{for}\quad n\to \infty,
\]
since $\lim_{n\to \infty} \frac{\mu}{\mu_{2,n}}=1$.
But then by using (ii) we obtain
\[
I_\mu\leq \mathcal{L}(\tilde \omega_n^{(2)}) \to I_\mu-I^* < I_\mu\qquad \mbox{for}\quad n\to \infty,
\]
which is a contradiction. Hence, $\mu_1\neq 0$ and similarly
we find 
$\mu_2\neq 0$. Thus, $|\mu_i|>0$ for $i=1,2$ and we can define the rescaled functions
$$
\bar \omega_n^{(i)}:= \left(\frac{|\mu_i|}{\mu_{i,n}}\right)^\frac{1}{3}\omega_n^{(i)}\qquad \mbox{for}\quad i=1,2,
$$
which satisfy
$
\mathcal{N}(\bar \omega_n^{(i)})=|\mu_i|
$
for all $n\in \N$
and 
\begin{equation*}%\label{eq:L}
\lim_{n\to \infty} \mathcal{L}(\bar \omega_n^{(1)})=I^*,\qquad 
\lim_{n\to \infty} \mathcal{L}(\bar \omega_n^{(1)})=I_\mu-I^*,
\end{equation*}
by (ii) together with $\lim_{n\to \infty} \left|\frac{|\mu_i|}{\mu_{i,n}}\right|=1$. %{\color{red} should it be $\lim_{n\to \infty} \left|\frac{|\mu_i|}{\mu_{i,n}}\right|=1$?}.
Combining this with the subadditivity of $I_\mu$ for $\mu>0$, which is stated in Proposition \ref{subadditivity}, we find the contradiction
\[
I_\mu\leq I_{|\mu_1|+|\mu_2|}<I_{|\mu_1|}+I_{|\mu_2|}\leq \lim_{n\to \infty} \left(\mathcal{L}(\bar \omega_n^{(1)})+\mathcal{L}(\bar \omega_n^{(2)})\right)= I_\mu.
\]

\medskip

We are left to show that there exists two sequences $\{\omega_n^{(1)}\}_{n\in \N}, \{\omega_n^{(2)}\}_{n\in \N}$, which have in the limit $n\to \infty$ disjoint support and satisfy (i), (ii). To this end,
let $\varphi_n=\partial_x^{-1}\phi_n$ and let $\chi\colon \mathbb{R}^2\rightarrow [0,1]$ be a smooth cutoff function such that $\chi(x,y)=1$ for $|(x,y)|\leq 1$ and $\chi(x,y)=0$ for $|(x,y)|\geq 2$. Next let $\sigma_n:=\varphi_n(\cdot +(x_n,y_n))$ and
\begin{equation*}
\sigma_n^{(1)}:=\chi_{1n}\sigma_{n, A_{2M_n, M_n}},\quad \sigma_n^{(2)}:=\chi_{2n}\sigma_{n,A_{N_n,N_n/2}},
\end{equation*}
where 
\begin{equation*}
\chi_{1n}(x,y):=\chi\left(\frac{1}{M_n}(x,y)\right),\quad \chi_{2n}:=1-\chi\left(\frac{2}{N_n}(x,y)\right).
\end{equation*}
Eventually, we define
\begin{equation*}
\omega_n:=\partial_x\sigma_n,\quad \omega_n^{(i)}:=\partial_x\sigma_n^{(i)},\ i=1,2.
\end{equation*}
We remark that by definition $\omega_n=\phi_n(\cdot + (x_n,y_n))$. Furthermore,
\begin{align*}
\text{supp}(\omega_n^{(1)})\subset B_{2M_n}\qquad \mbox{and} \qquad 
\text{supp}(\omega_n^{(2)})\subset \mathbb{R}^2\backslash B_{\frac{N_n}{2}}.
\end{align*}
See Figure \ref{fig:F} for an illustration of the supports for $\omega_n^{(i)}, i=1,2$.

\begin{center}
\begin{figure}[H]
\centering
\begin{tikzpicture}[scale=0.9]
\footnotesize
\draw[->] (-8,0)--(8.1,0) node[below]{$r=|(x,y)|$};
\draw[-] (0,0.1)--(0,-0.1) node[below]{$0$};
\draw[-] (1,0.1)--(1,-0.1) node[below]{$M_n$};
\draw[-] (2,0.1)--(2,-0.1) node[below]{$2M_n$};
\draw[-] (3.2,0.1)--(3.2,-0.1) node[below]{$N_n/2$};
\draw[-] (6.4,0.1)--(6.4,-0.1) node[below]{$N_n$};
\draw[-] (-1,0.1)--(-1,-0.1) node[below]{$-M_n$};
\draw[-] (-2,0.1)--(-2,-0.1) node[below]{$-2M_n$};
\draw[-] (-3.2,0.1)--(-3.2,-0.1) node[below]{$-N_n/2$};
\draw[-] (-6.4,0.1)--(-6.4,-0.1) node[below]{$-N_n$};
\draw[-,line width=0.1cm, orange!40] (-2,0.1)--(2,0.1);
\draw[-,line width=0.1cm, orange] (-1,0.1)--(1,0.1);
\node[orange] at (0,0.5) {$\omega_n^{(1)}=\omega_n$};
\draw[-,line width=0.1cm, blue!40] (3.2,0.1)--(8,0.1);
\draw[-,line width=0.1cm, blue] (6.4,0.1)--(8,0.1);
\node[blue] at (7.5,0.5) {$\omega_n^{(2)}=\omega_n$};
\draw[-,line width=0.1cm, blue!40] (-3.2,0.1)--(-8,0.1);
\draw[-,line width=0.1cm, blue] (-6.4,0.1)--(-8,0.1);
\node[blue] at (-7.5,0.5) {$\omega_n^{(2)}=\omega_n$};
\end{tikzpicture}
\caption{\centering For $n$ large enough the supports of $\omega_n^{(1)}$ and $\omega_n^{(2)}$, given by $B_{2M_n}$ and $\R^2\setminus B_{\frac{N_n}{2}}$, are disjoint. On $B_{M_n}$ and $\R^2\setminus B_{N_n}$ the functions $\omega_n^{(1)}$ and $\omega_n^{(2)}$ coincide  with $\omega_n$, respectively.}\label{fig:F}
\end{figure}
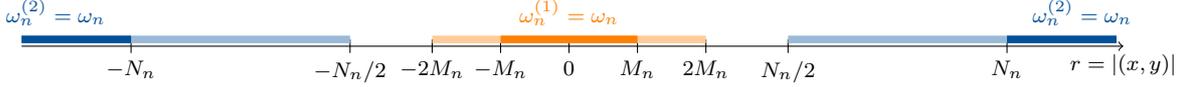
\end{center}

Roughly speaking the dichotomy assumption implies that the mass of $e_n$, which is given by $\mathcal{L}(\phi_n)=\frac{1}{2}\|\phi_n\|_{X_\frac{\alpha}{2}}^2$ splits into two disjoint regions. To be more precise, \eqref{eq:d} yields 
\begin{align}
\begin{split}\label{a_2_0_limit}
\norm{\omega_n}_{\frac{\alpha}{2}, A_{N_n,M_n}}^2&=\norm{\omega_n}_{\frac{\alpha}{2}, B_{N_n}}^2-\norm{\omega_n}_{\frac{\alpha}{2}, B_{M_n}}^2\\
&=2\left(\int_{(x_n,y_n)+B_{N_n}}e_n\ \mathrm{d}(x,y)-\int_{(x_n,y_n)+B_{M_n}}e_n\ \mathrm{d}(x,y)\right)\rightarrow 0,
\end{split}
\end{align}
as $n\rightarrow \infty$.
Using this result together with Proposition \ref{technical_results} (ii) and Proposition \ref{localization} we also find that
\begin{equation}\label{L3_limit_1}
\norm{\omega_n}_{L^3(A_{N_n,M_n})}\rightarrow 0 \qquad \mbox{for}\quad n\to \infty.
\end{equation}
 %, from which we also obtain the limits:
%\begin{align}
%\norm{\omega_n}_{L^3(A_{2M_n,M_n})}&\rightarrow 0,\label{L3_limit_2}\\
%\norm{\omega_n}_{L^3(A_{N_n,\frac{N_n}{2}})}&\rightarrow 0,\label{L3_limit_3}
%\end{align}
%as $n\rightarrow \infty$. 
 In what follows we will prove that the statements (i) and (ii) hold true.
\begin{itemize}
\item[(i)] Consider
\begin{align}
\begin{split}\label{limit_L3_dichotomy}
 \left|\mathcal{N}(\omega_n^{(1)}) + \mathcal{N}(\omega_n^{(2)}) -\mathcal{N}(\omega_n)\right|= 
\left|\int_{\mathbb{R}^2}(\omega_n^{(1)})^3\ \mathrm{d}(x,y)+\int_{\mathbb{R}^2}(\omega_n^{(2)})^3\ \mathrm{d}(x,y)-\int_{\mathbb{R}^2}\omega_n^3\ \mathrm{d}(x,y)\right|\\
\qquad \qquad =\bigg|\int_{A_{2M_n,M_n}}(\omega_n^{(1)})^3\mathrm{d}(x,y)\quad+\int_{A_{N_n,\frac{N_n}{2}}}(\omega_n^{(2)})^3\ \mathrm{d}(x,y)-\int_{A_{N_n,M_n}}\omega_n^3\ \mathrm{d}(x,y)\bigg|,
\end{split}
\end{align}
where we used that $\omega_n^{(1)}=\omega_n$ on $B_{M_n}$ and $\omega_n^{(2)}=\omega_n$ on $\R^2 \setminus B_{N_n}$.
The term $\int_{A_{N_n,M_n}}\omega_n^3\ \mathrm{d}(x,y)$ tends to zero in view of \eqref{L3_limit_1} and
\begin{align*}
\norm{w_n^{(1)}}_{L^3(A_{2M_n,M_n})}&= \norm{\partial_x \sigma_n^{(1)}}_{L^3(A_{2M_n,M_n})}\\
&\leq \frac{1}{M_n}\norm{\partial_x\chi_{1n}\sigma_{n,A_{2M_n, M_n}}}_{L^3(A_{2M_n,M_n})}+\norm{\chi_{1n}\partial_x\sigma_{n,A_{2M_n, M_n}}}_{L^3(A_{2M_n,M_n})}\\
&=\frac{1}{M_n}\norm{\partial_x\chi_{1n}\sigma_{n,A_{2M_n, M_n}}}_{L^3(A_{2M_n,M_n})}+\norm{\chi_{1n}\omega_n}_{L^3(A_{2M_n,M_n})},
\end{align*}
where we used that $\partial_x \sigma_{n,A_{2M_n, M_n}}= \partial_x \sigma_n = \omega_n$.
Using Lemma \ref{deBourd_Saut_lemma}, the smoothness of $\chi_{1,n}$, and \eqref{a_2_0_limit}, the first term on the right-hand side above can be estimated by
\begin{align}
\label{eq:est_P}
\begin{split}
\frac{1}{M_n}&\norm{\partial_x\chi_{1n}\sigma_{n,A_{2M_n, M_n}}}_{L^3(A_{2M_n,M_n})}\lesssim \frac{1}{M_n}\norm{\sigma_{n,A_{2M_n, M_n}}}_{L^3(A_{2M_n,M_n})}\\
&\lesssim M_n^{-\frac{2}{3}}\norm{\nabla\sigma_{n,A_{2M_n, M_n}}}_{L^2(A_{2M_n,M_n})}\\
&\leq M_n^{-\frac{2}{3}}\left(\norm{\partial_x\sigma_{n,A_{2M_n, M_n}}}_{L^2(A_{2M_n,M_n})} +\norm{\partial_y\sigma_{n,A_{2M_n, M_n}}}_{L^2(A_{2M_n,M_n})}\right) \\
&=M_n^{-\frac{2}{3}}\left(\norm{\omega_n}_{L^2(A_{2M_n,M_n})} +\norm{\partial_y\partial_x^{-1}\omega_n}_{L^2(A_{2M_n,M_n})}\right) \\
&\leq M_n^{-\frac{2}{3}}\|\omega_n\|_{\frac{\alpha}{2}, A_{N_n,M_n}} \to 0
\end{split}
\end{align}
as $n\to \infty$. The second term tends to zero as $n\rightarrow \infty$ due to \eqref{L3_limit_1} and the boundedness of $\chi_{1,n}$. We conclude 
\begin{equation*}
\int_{\mathbb{R}^2}(\omega_n^{(1)})^3\ \mathrm{d}(x,y)\rightarrow 0\qquad \mbox{and}\qquad \int_{\mathbb{R}^2}(\omega_n^{(2)})^3\ \mathrm{d}(x,y)\rightarrow 0
\end{equation*}
as $n\rightarrow \infty$, where the second assertion can be shown in the same way.
%In the same way we find that 
% \begin{equation*}
% \int_{\mathbb{R}^2}(\omega_n^{(2)})^3\ \mathrm{d}(x,y)\rightarrow 0,
% \end{equation*}
% as $n\rightarrow \infty$.
Together with \eqref{L3_limit_1}, equation \eqref{limit_L3_dichotomy} finishes the proof of statement (i).
\item[(ii)]

% Due to \eqref{limit_L3_dichotomy} and the fact that $\mathcal{N}(\omega_n)=\mu$, we may, possibly by taking subsequences, assume that 
% \begin{equation*}
% \lim_{n\rightarrow \infty}\mathcal{N}(\omega_n^{(1)})=\mu_1,\quad \lim_{n\rightarrow \infty}\mathcal{N}(\omega_n^{(2)})=\mu_2,
% \end{equation*}
% such that $\mu_1+\mu_2=\mu$. 

We proceed to investigate the limit 
\begin{equation*}
\lim_{n\rightarrow \infty}\mathcal{L}(\omega_n^{(1)})=\frac{1}{2}\lim_{n\rightarrow \infty}\left(\norm{\omega_n^{(1)}}_{L^2(\R^2)}^2+\norm{\mathrm{D}_x^\frac{\alpha}{2}\omega_n^{(1)}}_{L^2(\R^2)}^2+\norm{\partial_x^{-1}\partial_y\omega_n^{(1)}}_{L^2(\R^2)}^2\right)
\end{equation*}
 and show that $\lim_{n\to \infty}\mathcal{L}(\omega_n^{(1)}) = I^*$.
First consider 
\begin{align}\label{aux_omega1_L2}
\begin{split}
&\norm{\omega_n^{(1)}}_{L^2(\R^2)}^2=\norm{\frac{1}{M_n}\partial_x\chi_{1n}\sigma_{n,B_{M_n}}+\chi_{1n}\omega_n}_{L^2(\R^2)}^2\\
&\quad =\frac{1}{M_n^2}\norm{\partial_x\chi_{1n}\sigma_{n,A_{2M_n, M_n}}}_{L^2(\R^2)}^2+\frac{2}{M_n}\langle \partial_x\chi_{1n}\sigma_{n,A_{2M_n, M_n}},\chi_{1n}\omega_n\rangle_{L^2(\R^2)}+\norm{\chi_{1n}\omega_n}_{L^2(\R^2)}^2.
\end{split}
\end{align} 
Since $\partial_x \chi_{1,n}$ has support in $A_{2M_n,M_n}$ a similar argument as in \eqref{eq:est_P} shows 
\begin{equation}\label{zero_limit_L2}
\frac{1}{M_n}\norm{\partial_x\chi_{1n}\sigma_{n,A_{2M_n, M_n}}}_{L^2(\R^2)}\lesssim \norm{\omega_n}_{\frac{\alpha}{2}, A_{2M_n,M_n}}\rightarrow 0,\text{ as }n\rightarrow\infty,
\end{equation}
by using Lemma \ref{deBourd_Saut_lemma} and \eqref{a_2_0_limit}. Hence, we find that both the first and second term on the right-hand side of \eqref{aux_omega1_L2} tend to zero as $n\rightarrow \infty$. For the third term on the right-hand side of \eqref{aux_omega1_L2} we have
\begin{equation*}
\norm{\chi_{1n}\omega_n}_{L^2(\R^2)}^2=\norm{\omega_n}_{L^2(B_{M_n})}^2+\norm{\chi_{1n}\omega_n}_{L^2(A_{2M_n,M_n})}^2,
\end{equation*}
where we used that $\mbox{supp} (\chi_{1,n})\subset B_{2M_n}$ and $\chi_{1,n}=1$ on $B_{M_n}$.
Due to \eqref{a_2_0_limit} we find 
\begin{equation*}
\norm{\chi_{1n}\omega_n}_{L^2(A_{2M_n,M_n)}}\lesssim \norm{\omega_n}_{\frac{\alpha}{2},A_{2M_n,M_n}}\rightarrow 0.
\end{equation*}
We conclude 
\begin{equation}\label{L2_main_limit}
\lim_{n\rightarrow\infty}\left|\norm{\omega_n^{(1)}}_{L^2(\R^2)}^2-\norm{\omega_n}_{L^2(B_{M_n})}^2\right|=0.
\end{equation}
In the same way we can show
\begin{equation}\label{dy_L2_main_limit}
\lim_{n\rightarrow \infty}\left|\norm{\partial_x^{-1}\partial_y\omega_n^{(1)}}_{L^2(\R^2)}-\norm{\partial_x^{-1}\partial_y\omega_n}_{L^2(B_{M_n})}^2\right|=0,
\end{equation}
so that we are only left to study
\begin{align}\label{aux_Dalpha}
\begin{split}
&\norm{\mathrm{D}_x^\frac{\alpha}{2}\omega_{n}^{(1)}}_{L^2(\R^2)}^2=\norm{\mathrm{D}_x^\frac{\alpha}{2}\left(\frac{1}{M_n}\partial_x\chi_{1n}\sigma_{n,A_{2M_n, M_n}}+\chi_{1n}\omega_n\right)}_{L^2(\R^2)}^2\\
&\quad =\frac{1}{M_n^2}\norm{\mathrm{D}_x^\frac{\alpha}{2}(\partial_x\chi_{1n}\sigma_{n,A_{2M_n, M_n}})}_{L^2(\R^2)}^2+\frac{2}{M_n}\langle\mathrm{D}_x^\frac{\alpha}{2}(\partial_x\chi_{1n} \sigma_{n,A_{2M_n, M_n}}),\mathrm{D}_x^\frac{\alpha}{2}(\chi_{1n}\omega_n)\rangle_{L^2(\R^2)} \\
&\qquad +\norm{\mathrm{D}_x^\frac{\alpha}{2}(\chi_{1n}\omega_n)}_{L^2(\R^2)}^2.  
\end{split}
\end{align}
We show first
\begin{equation}\label{eq:D_x}
\|\mathrm{D}_x^\frac{\alpha}{2}\left(\partial_x \chi_{1,n} \sigma_{n,A_{2M_n,M_n}}\right)\|_{L^2(\R^2)}\lesssim M_n \|\omega_n\|_{X_\frac{\alpha}{2},A_{2M_n,M_n}},
\end{equation}
which implies by \eqref{a_2_0_limit}, the smoothness of $\chi_{1,n}$ and the boundedness of $\omega_n$ in $X_\frac{\alpha}{2}$ that the first two terms on the right-hand side of \eqref{aux_Dalpha} tend to zero as $n\to \infty$.
As in the proof of Proposition \ref{localization}, an application of Leibniz' rule for fractional derivatives yields
\begin{align*}
\|\mathrm{D}_x^\frac{\alpha}{2}\left(\partial_x \chi_{1,n} \sigma_{n,A_{2M_n,M_n}}\right)\|_{L^2(\R^2)} &\lesssim \|\sigma_{n,A_{2M_n, M_n}}\|_{L^2(A_{2M_n,M_n})} + \| \mathrm{D}_x^\frac{\alpha}{2} \sigma_{n,A_{2M_n,M_n}}\|_{L^2(A_{2M_n,M_n})}\\
&\leq 2\|\sigma_{n,A_{2M_n, M_n}}\|_{L^2(A_{2M_n,M_n})} + \| \mathrm{D}_x^\frac{\alpha}{2} \omega_n\|_{L^2(A_{2M_n,M_n})},
\end{align*}
where we used interpolation and $\partial_x \sigma_{n,A_{2M_n,M_n}}= \omega_n$ in the last inequality.
Using Lemma \ref{deBourd_Saut_lemma}, the first term on the right-hand side above can by estimated by $M_n\|\omega_n\|_{X_\frac{\alpha}{2},A_{2M_n,M_n}}$ in the same spirit as in \eqref{eq:est_P}, while the second term is bounded by $\|\omega_n\|_{X_\frac{\alpha}{2},A_{2M_n,M_n}}$. Hence, \eqref{eq:D_x} holds true and the first two terms in \eqref{aux_Dalpha} tend to zero as $n\to \infty$.

%{\color{gray}
%Arguing as in the proof of Proposition \ref{localization} we find that \textcolor{blue}{While writing this down i realized that I'm just redoing some steps of the proof of Proposition \ref{localization}. However I was not able to directly use Proposition \ref{localization} to show that dichotomy does not occur, but my feeling is that it should be possible.}
%\begin{equation*}
%\norm{\mathrm{D}_x^\frac{\alpha}{2}(\partial_x\chi_{1n}\sigma_{n,A_{2M_n, M_n})}}_{L^2}\lesssim \norm{\mathrm{D}_x^\frac{\alpha}{2}(\partial_x\chi_{1n})\sigma_{n,A_{2M_n, M_n}}}_{L^2}+\norm{\partial_x\chi_{1n}\mathrm{D}_x^\frac{\alpha}{2}(\sigma_{n,A_{2M_n, M_n})}}_{L^2}
%\end{equation*}
%The first terms can be estimated using Lemma \ref{deBourd_Saut_lemma} as in \eqref{zero_limit_L2} while for the second term we use that $|\xi_1|^\frac{\alpha}{2}|\lesssim 1+|\xi_1|^{\frac{\alpha}{2}+1}$ and obtain
%\begin{equation*}
%\norm{\partial_x\chi_{1n}\mathrm{D}_x^\frac{\alpha}{2}(\sigma_{n,A_{2M_n, M_n}})}_{L^2}\lesssim \norm{\partial_x\chi_{1n}(\sigma_{n,A_{2M_n, M_n}})}_{L^2}+\norm{\partial_x\chi_{1n}\mathrm{D}_x^\frac{\alpha}{2}(\omega_n)}_{L^2}.
%\end{equation*}
%Again, the first term can be estimated as in \eqref{zero_limit_L2} while the second is bounded by $\norm{\omega_n}_{L^2(A_{2M_n,M_n})}$. Combining the above results we find that both the first and second terms in \eqref{aux_Dalpha} tend to $0$ as $n\rightarrow \infty$.
%}
It remains to investigate the third term  in \eqref{zero_limit_L2},
given by
\[
\|\mathrm{D}_x^\frac{\alpha}{2} \left(\chi_{1,n}\omega_n\right)\|_{L^2(\R^2)}^2 = \|\mathrm{D}_x^\frac{\alpha}{2} \omega_n\|_{L^2(B_{M_n})}^2 + \|\mathrm{D}_x^\frac{\alpha}{2} \left(\chi_{1,n}\omega_n\right)\|_{L^2(A_{2M_n,M_n})}^2.
\]
Again, by applying Leibniz' rule for fractional derivatives and using that $\chi_{1,n}$ is smooth, we find that
\[
\|\mathrm{D}_x^\frac{\alpha}{2} \left(\chi_{1,n}\omega_n\right)\|_{L^2(A_{2M_n,M_n})}^2 \lesssim \|\omega_n\|_{L^2(A_{2M_n,M_n})}^2 + \|\mathrm{D}_x^\frac{\alpha}{2}\omega_n\|_{L^2(A_{2M_n,M_n})}^2 \leq \|\omega\|_{X_\frac{\alpha}{2}, A_{2M_n,M_n}} \to 0,
\]
by \eqref{a_2_0_limit}.
This allows us to conclude 
\begin{equation}\label{Dalpha_main_limit}
\lim_{n\rightarrow\infty}\bigg|\norm{\mathrm{D}_x^\frac{\alpha}{2}\omega_n^{(1)}}-\norm{\mathrm{D}_x^\frac{\alpha}{2}\omega_n}_{L^2(B_{M_n})}\bigg|=0.
\end{equation}
Gathering \eqref{L2_main_limit}, \eqref{dy_L2_main_limit},  and \eqref{Dalpha_main_limit} we have shown
\begin{equation*}
\mathcal{L}(\omega_n^{(1)})\rightarrow I^* \qquad \mbox{for}\quad n\to \infty.
\end{equation*}
In the same way we can obtain 
\begin{equation*}
\mathcal{L}(\omega_n^{(2)})\rightarrow I_\mu-I^*\qquad \mbox{for}\quad n\to \infty,
\end{equation*}
which proves statement (ii).
\end{itemize}
This concludes the proof of the proposition.

\end{proof}

By Proposition \ref{prop:vanishing} and Proposition \ref{prop:dichotomy} the scenarios of "vanishing" and "dichotomy" in Theorem \ref{cc} are ruled out and the only possibility left is the concentration scenario. Hence, there exists $\{(x_n,y_n)\}_{n\in\mathbb{N}}\subset \mathbb{R}^2$ such that for each $\varepsilon>0$ there exists $r>0$ with
\begin{equation*}
\int_{B_r(x_n,y_n)}e_n\ \mathrm{d}(x,y)\geq I_\mu-\varepsilon,\text{ for all }n\in\mathbb{N}.
\end{equation*}
This implies that, for $r$ sufficiently large,
\begin{equation}\label{min_seq_1}
\int_{\mathbb{R}^2\backslash B_r(x_n,y_n)}e_n<\varepsilon.
\end{equation}
By taking a subsequence we may assume that \eqref{min_seq_1} holds for all $n\in\mathbb{N}$. This implies in particular that 
\begin{equation}\label{L2_min_seq_1}
\norm{\phi_n(\cdot-(x_n,y_n))}_{L^2(|(x,y)|>r)}<\varepsilon.
\end{equation}
Since $\{\phi_n\}_{n\in\mathbb{N}}$ is a bounded sequence in $X_\frac{\alpha}{2}$ we may assume  $\phi_n\rightharpoonup \phi\in X_\frac{\alpha}{2}$. From Proposition \ref{embedding_lemma} we know that $X_\frac{\alpha}{2}$ is compactly embedded in $L_{\text{loc}}^2(\mathbb{R}^2)$, and therefore $\phi_n\rightarrow \phi$ strongly in $L_{\text{loc}}^2(\mathbb{R}^2)$. By combining this with \eqref{L2_min_seq_1} we can use Cantors diagonal extraction process to extract a subsequence, still denoted by $\phi_n$, converging strongly in $L^2(\mathbb{R}^2)$. Proposition \ref{technical_results} (ii) implies that for $\alpha> \tfrac{4}{5}$, $\phi_n\rightarrow \phi$ in $L^3(\mathbb{R}^2)$ as well, which yields that $\mathcal{N}(\phi)=\mu$. Finally, since $\mathcal{L}(\phi)=\frac{1}{2}\norm{\phi}_\frac{\alpha}{2}$ and the norm is weakly lower semicontinuous, we find  
\begin{equation*}
\mathcal{L}(\phi)\leq \liminf\mathcal{L}(\phi_n)=I_\mu.
\end{equation*}
Hence, $\phi$ is a solution of the minimization problem \eqref{min_problem}.	\\

Next, we investigate the lump solutions numerically. For the implementation of the iteration scheme \eqref{iteration}, we consider the space interval as $[-1024,1024]\times [-1024,1024]$ and we set $c=1$ in all the experiments. To test the efficiency of the numerical scheme we first consider the KP-I case (i.e. $\alpha=2$) where the exact analytical lump solution is given in \eqref{exact-solution}.

%called of Petviashvili quotient. %% where $\langle\cdot ~, \cdot \rangle $ denotes the standard inner product in $L^2_{per} (\mathbb{T})$.

\begin{figure}[H]
	\begin{minipage}[t]{0.45\linewidth}
		\includegraphics[width=3.1in,height=2.3in]{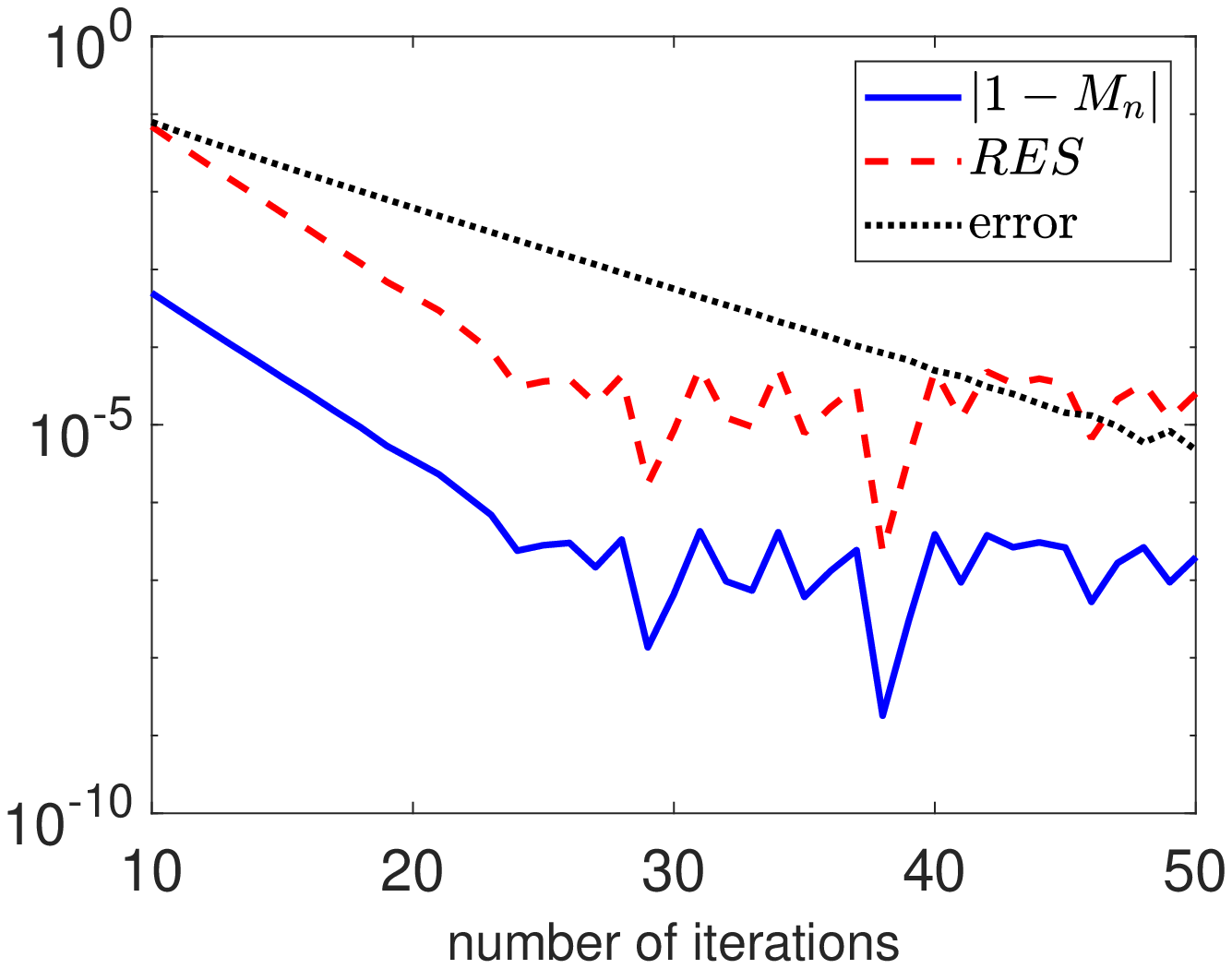}
	\end{minipage}
	\hspace{30pt}
	\begin{minipage}[t]{0.45\linewidth}
		\includegraphics[width=3.1in,height=2.3in]{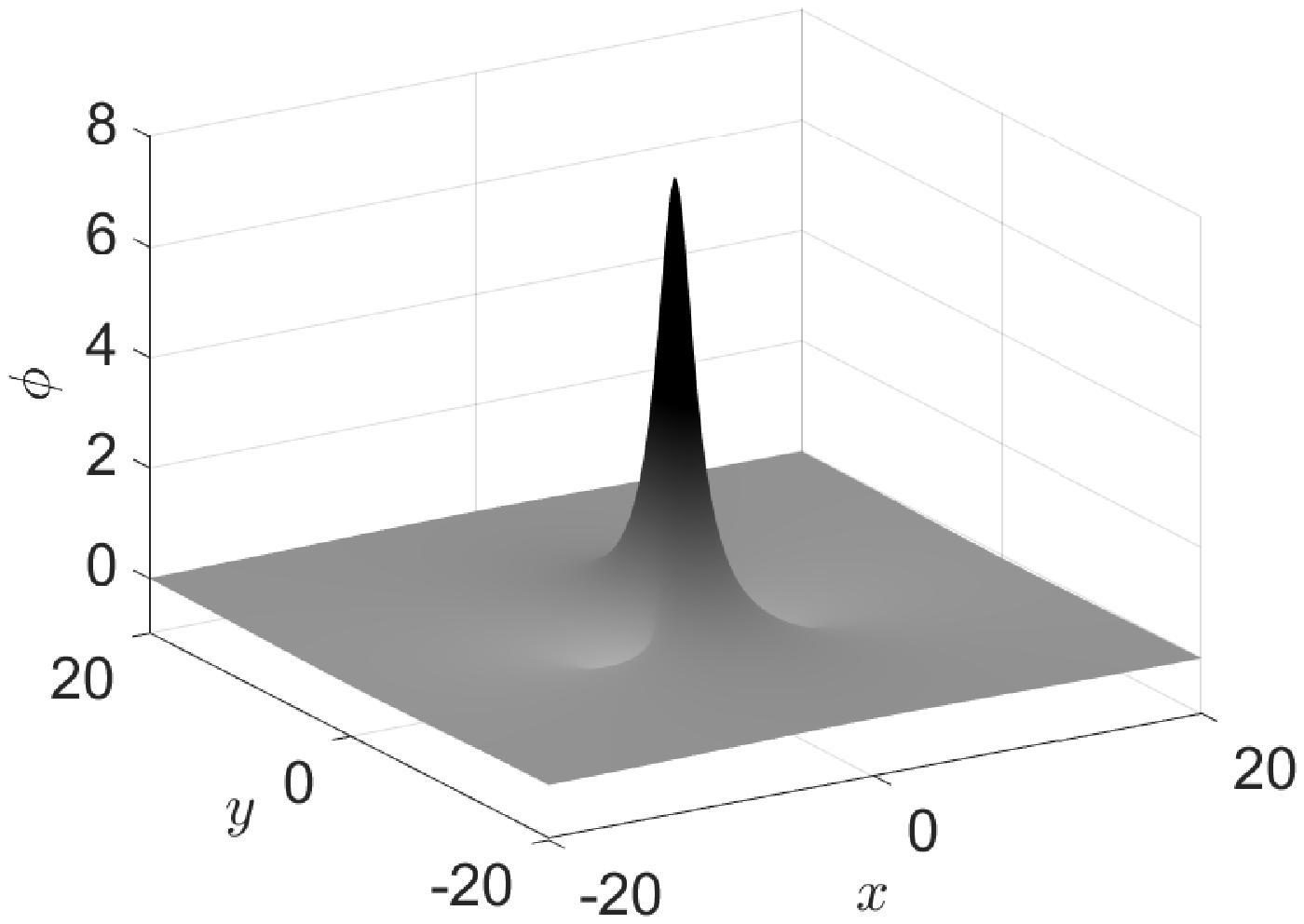}
	\end{minipage}
%\end{figure}
%\begin{figure}[H]
\hspace{4cm}
	\begin{minipage}[t]{0.45\linewidth}
		\includegraphics[width=3.1in,height=2.3in]{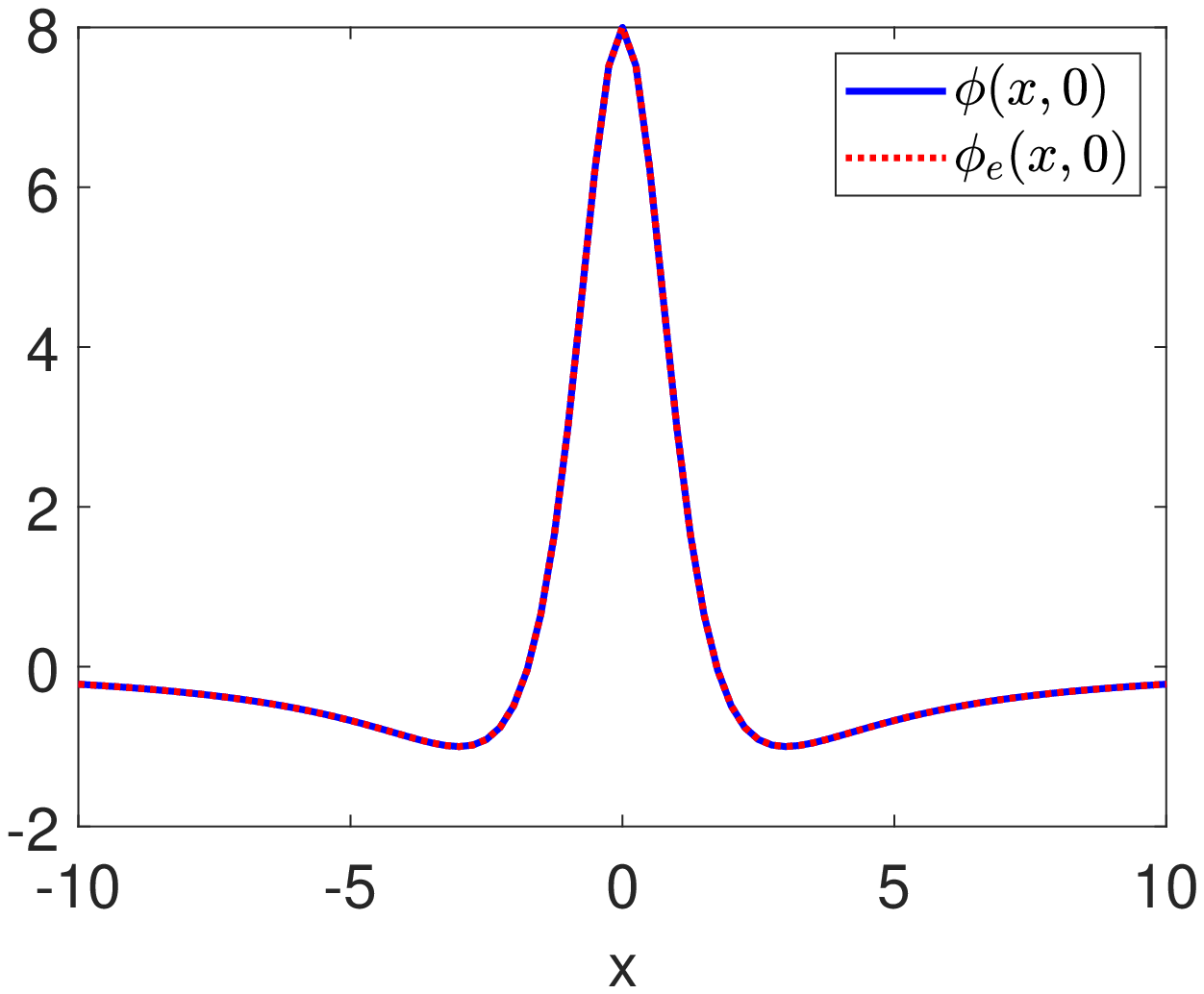}
	\end{minipage}
 \hspace{30pt}
	\begin{minipage}[t]{0.45\linewidth}
		\includegraphics[width=3.1in,height=2.3in]{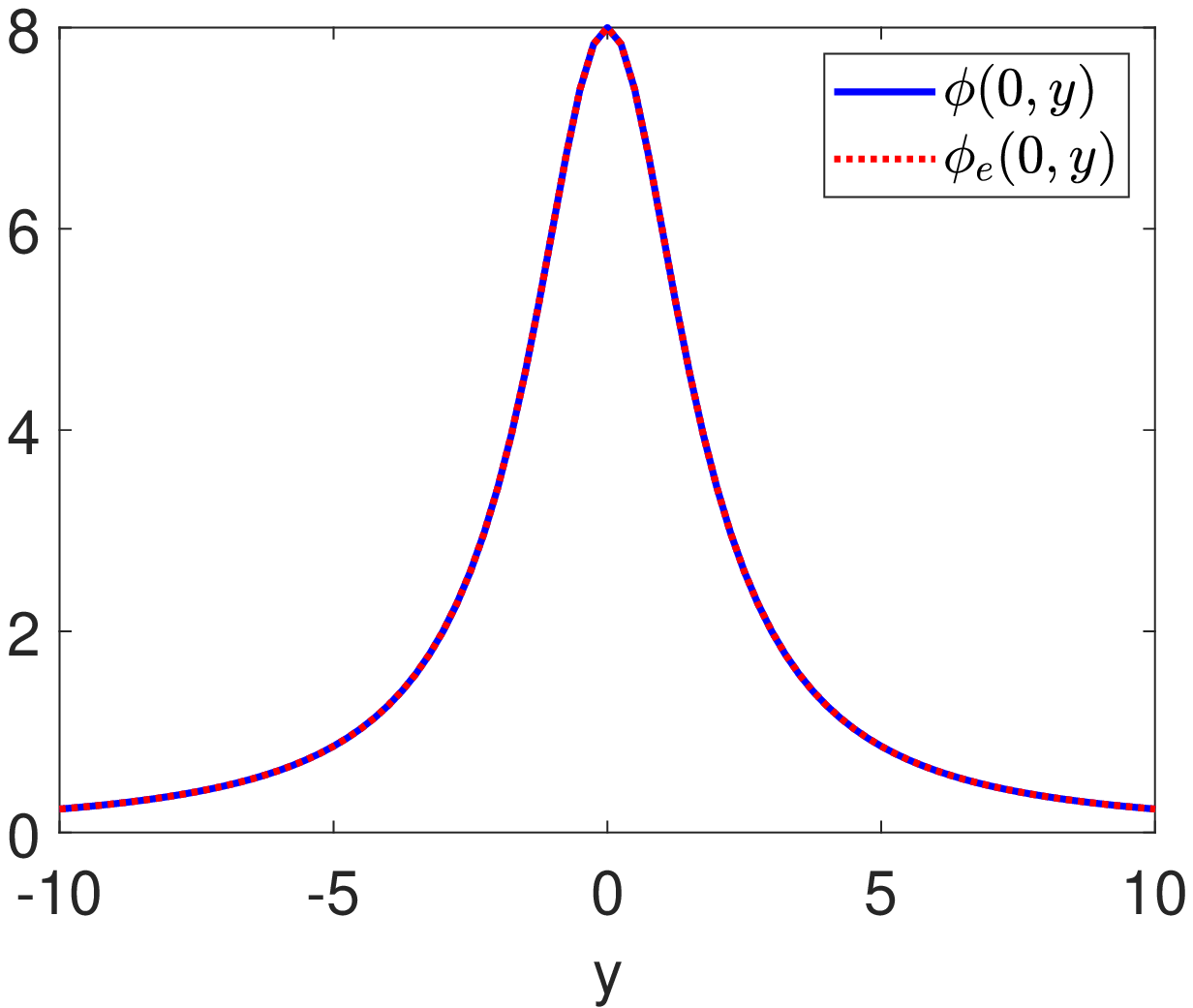}
	\end{minipage}
	\caption{The variation of the iteration, stabilization factor, and the residual errors with the number of iterations in the semi-log scale (top left), numerically generated lump solution of KP-I equation (top right), $x$-cross section $\phi(x,0)$ (bottom left) and $y$-cross section $\phi(0,y)$ (bottom right) of both numerical and analytical solutions.}
	\label{alpha2}
\end{figure}
In Figure \ref{alpha2}, we represent the numerically generated lump solution of the KP-I equation and the cross sections $\phi(x,0)$ and $\phi(0,y)$ of both numerical and analytical solutions. Choosing  the number of grid points as $N_x=N_y=2^{13}$ for both $x$ and $y$ coordinates we see that the $L^\infty$-norm of the difference of numerical and exact solutions is approximately of order $10^{-5}$ after $50$ iterations.  In Figure \ref{alpha2} we also present the variation of
three different errors with the number of iterations in semi-log scale.
\begin{figure}[H]
	\begin{minipage}[t]{0.45\linewidth}
		\includegraphics[width=3.1in,height=2.3in]{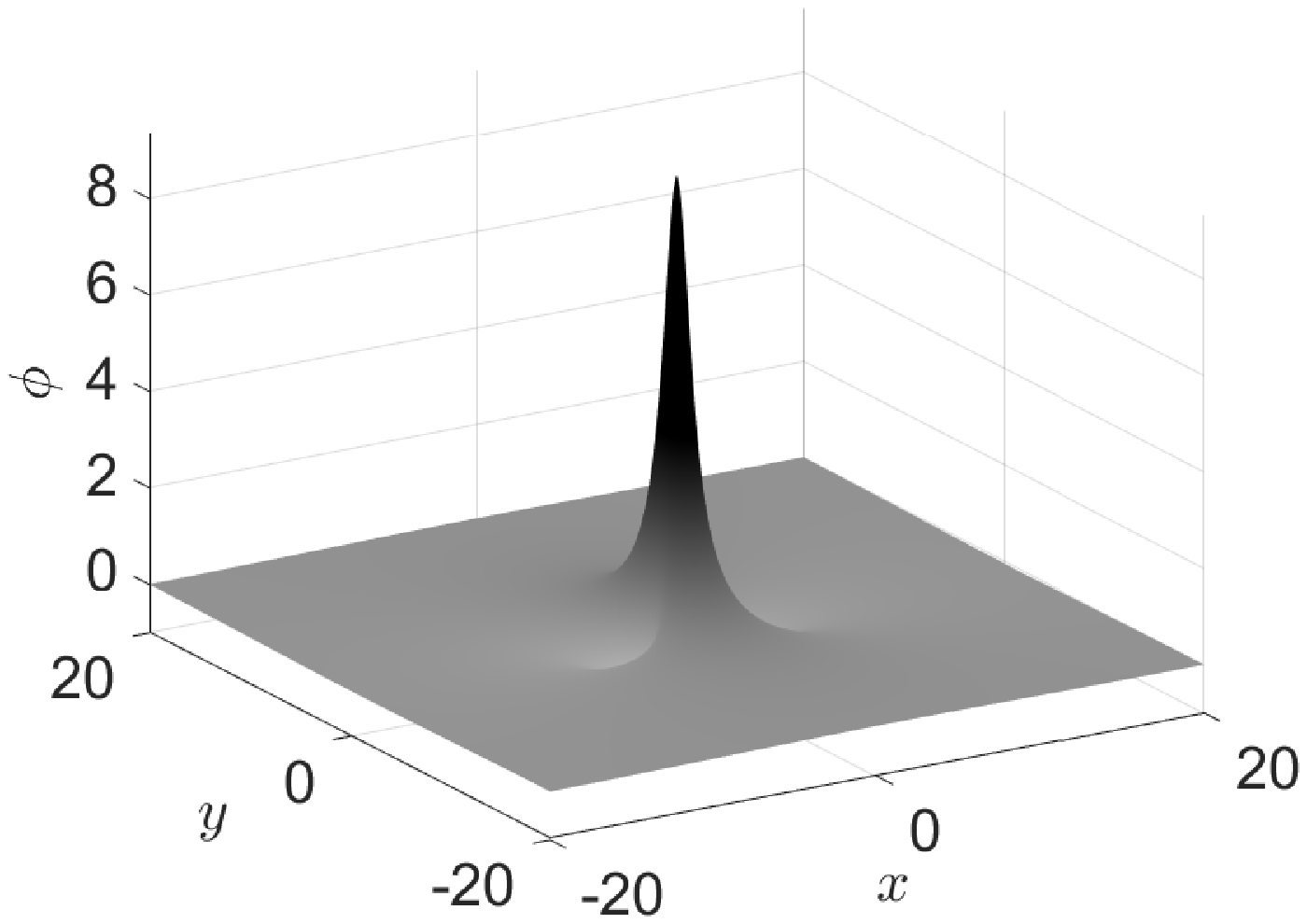}
	\end{minipage}
	\hspace{30pt}
	\begin{minipage}[t]{0.45\linewidth}
		\includegraphics[width=3.1in,height=2.3in]{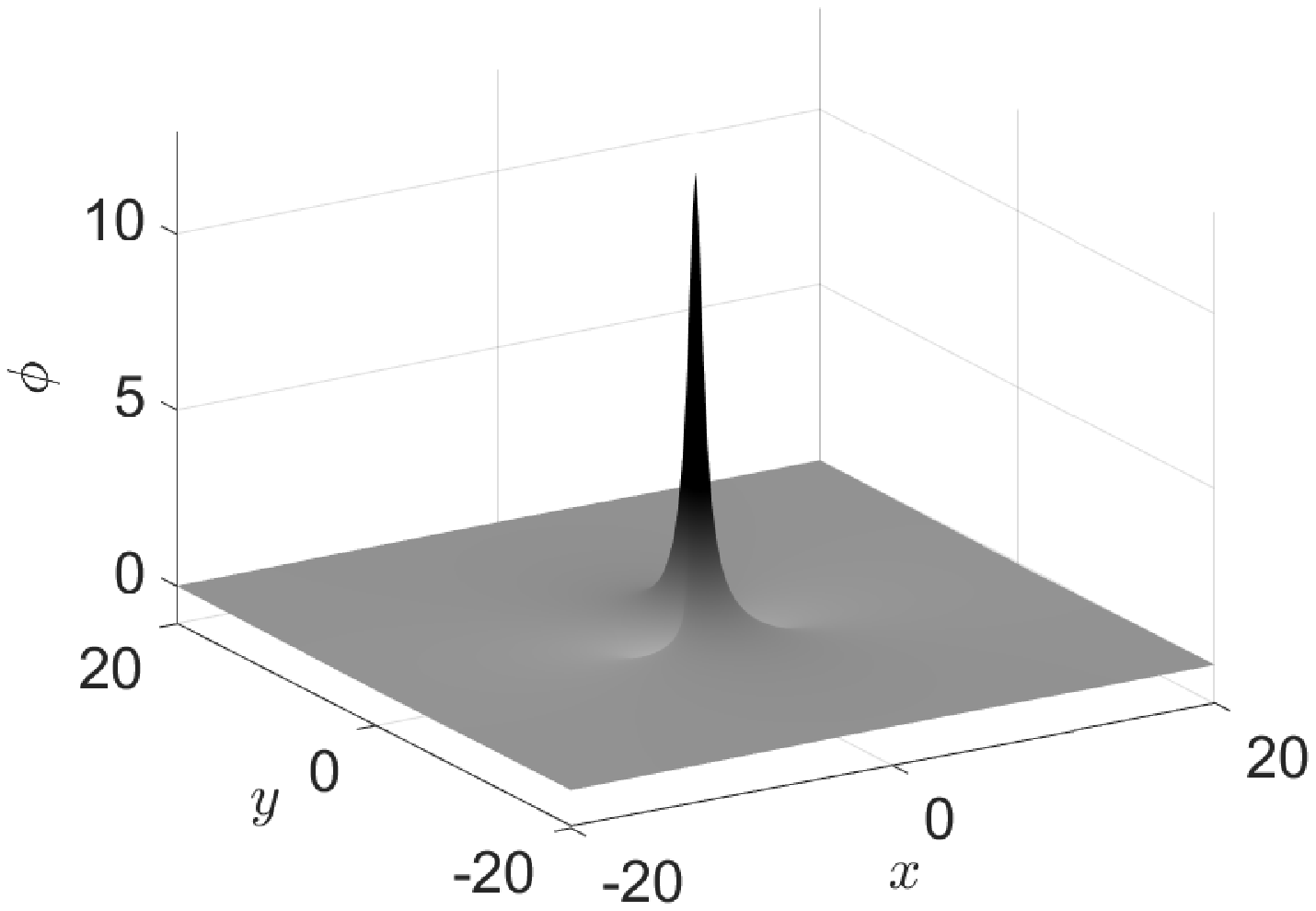}
	\end{minipage}
	\caption{ Lump solutions for $\alpha=1.7$ (left panel) and  $\alpha=1.35$ (right panel).}
	\label{alpha17-135}
\end{figure}

In the next experiment we consider the fractional case.  Figure \ref{alpha17-135}  depicts the  profiles of the numerical solutions for $\alpha=1.7$ and $\alpha=1.35$, respectively.

It can be seen from the numerical results that the lump solutions become more peaked for smaller values of $\alpha$. Therefore, to ensure  the required numerical  accuracy  we need to increase the number of grid points to $2^{14}$ for both $x$ and $y$ directions when $\alpha=1.35$.  In this case the Fourier coefficients go down to $10^{-5}$.  To obtain the same numerical accuracy for smaller values of $\alpha$, we need to increase the number of Fourier modes more, which is not accessible due to the limits of computation. \\

We also observe the cross-sectional symmetry of the lump solutions of the fKP-I equation numerically.  We present several $x$ and $y$-cross sections of the solutions for various $\alpha$. We consider the cases $\alpha=2$, $\alpha=1.7$ and $\alpha=1.35$ in Figure \ref{alpha2-symmetry}, Figure \ref{alpha17-symmetry}, and Figure \ref{alpha135-symmetry}, respectively. The numerical results indicate symmetry in both $x$ and $y$ directions. 

\begin{figure}[H]
	\begin{minipage}[t]{0.45\linewidth}
		\includegraphics[width=3.1in,height=2.3in]{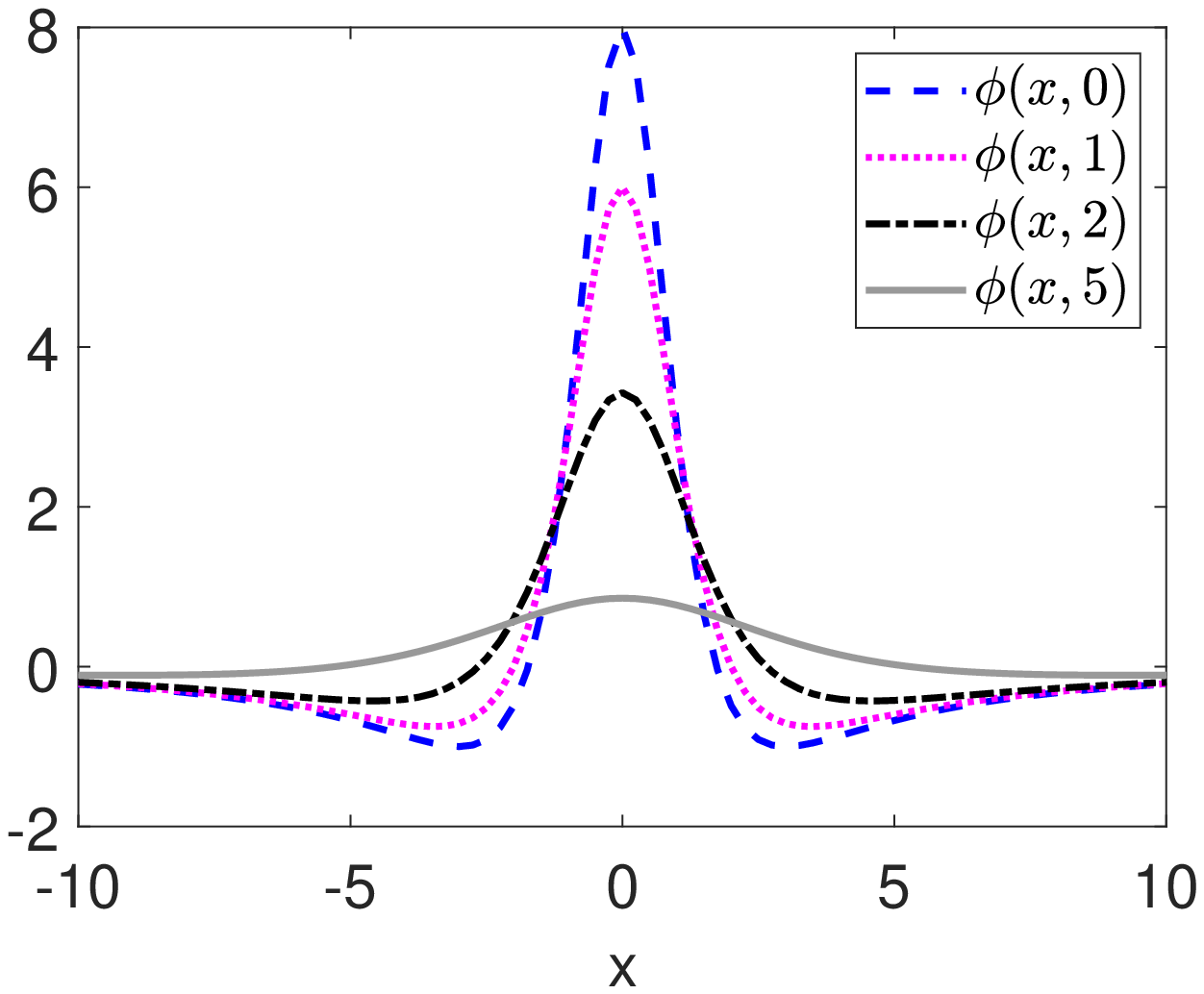}
	\end{minipage}
	\hspace{30pt}
	\begin{minipage}[t]{0.45\linewidth}
		\includegraphics[width=3.1in,height=2.3in]{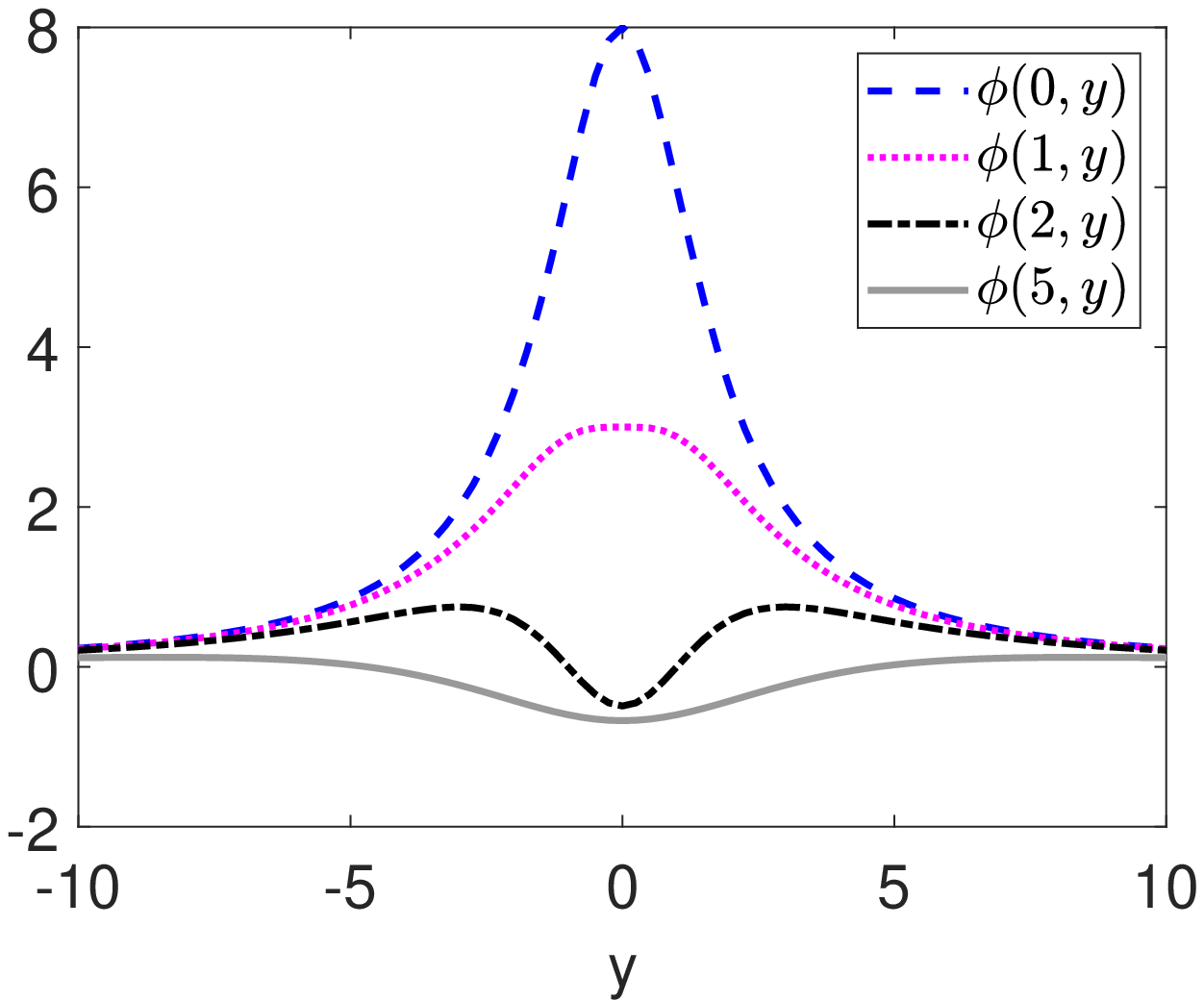}
	\end{minipage}
	\caption{Several $x$ and $y-$cross sections of the numerically generated lump solution  for $\alpha=2$.  }
	\label{alpha2-symmetry}
\end{figure}

\begin{figure}[H]
	\begin{minipage}[t]{0.45\linewidth}
		\includegraphics[width=3.1in,height=2.3in]{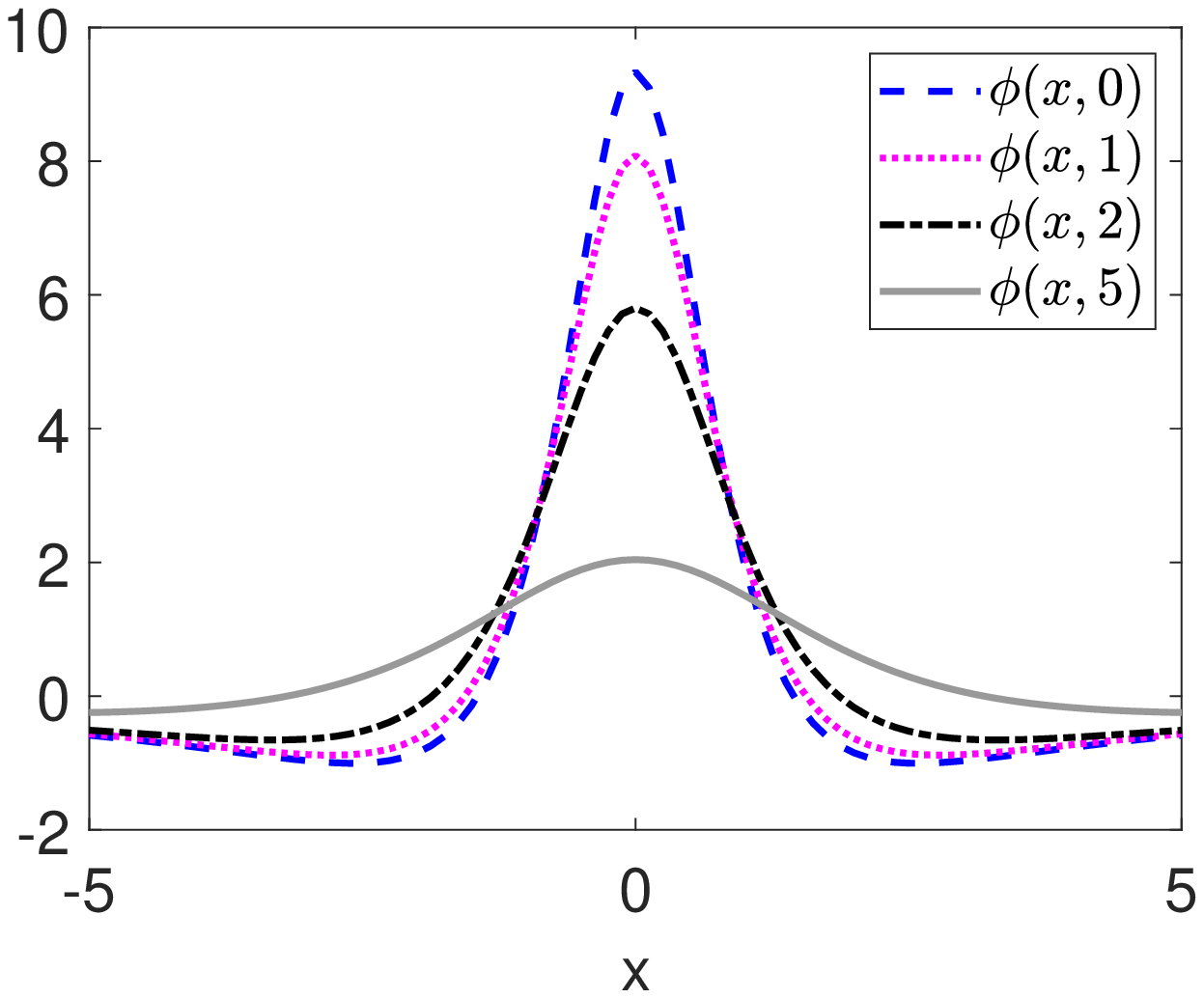}
	\end{minipage}
	\hspace{30pt}
	\begin{minipage}[t]{0.45\linewidth}
		\includegraphics[width=3.1in,height=2.3in]{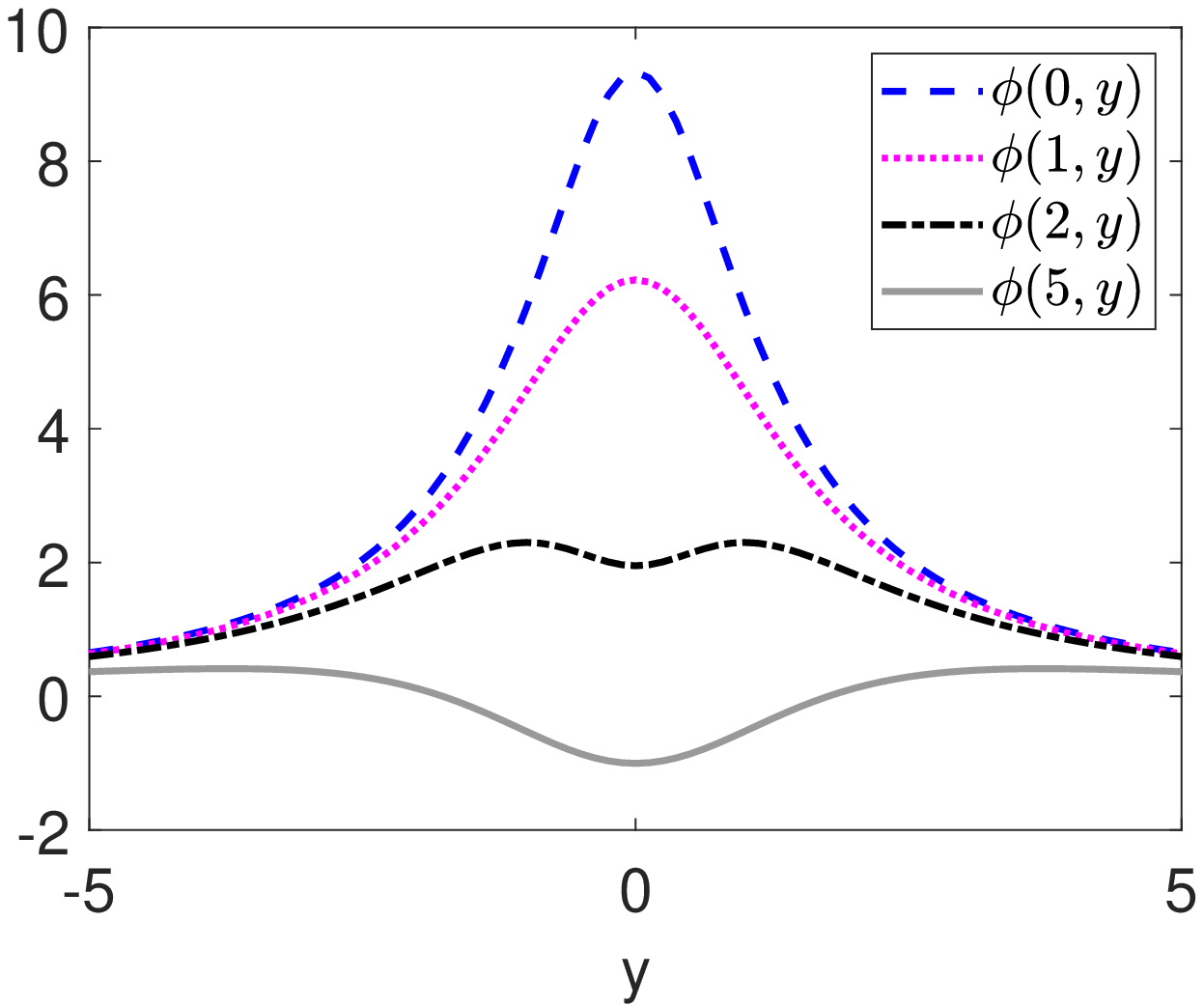}
	\end{minipage}
	\caption{Several $x$ and $y-$cross sections of the numerically generated lump solution  for $\alpha=1.7$.  }
	\label{alpha17-symmetry}
\end{figure}

\begin{figure}[H]
	\begin{minipage}[t]{0.45\linewidth}
		\includegraphics[width=3.1in,height=2.3in]{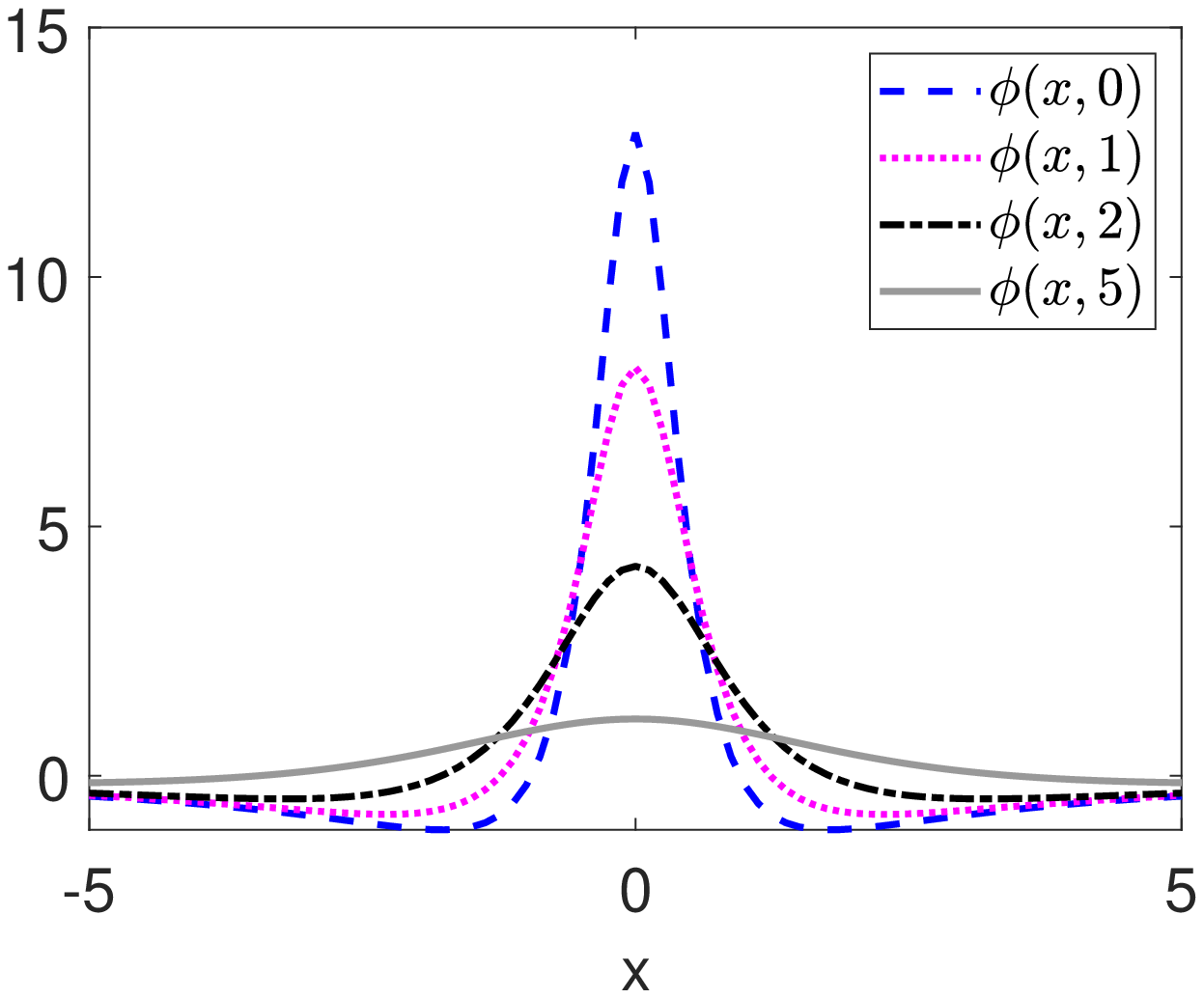}
	\end{minipage}
	\hspace{30pt}
	\begin{minipage}[t]{0.45\linewidth}
		\includegraphics[width=3.1in,height=2.3in]{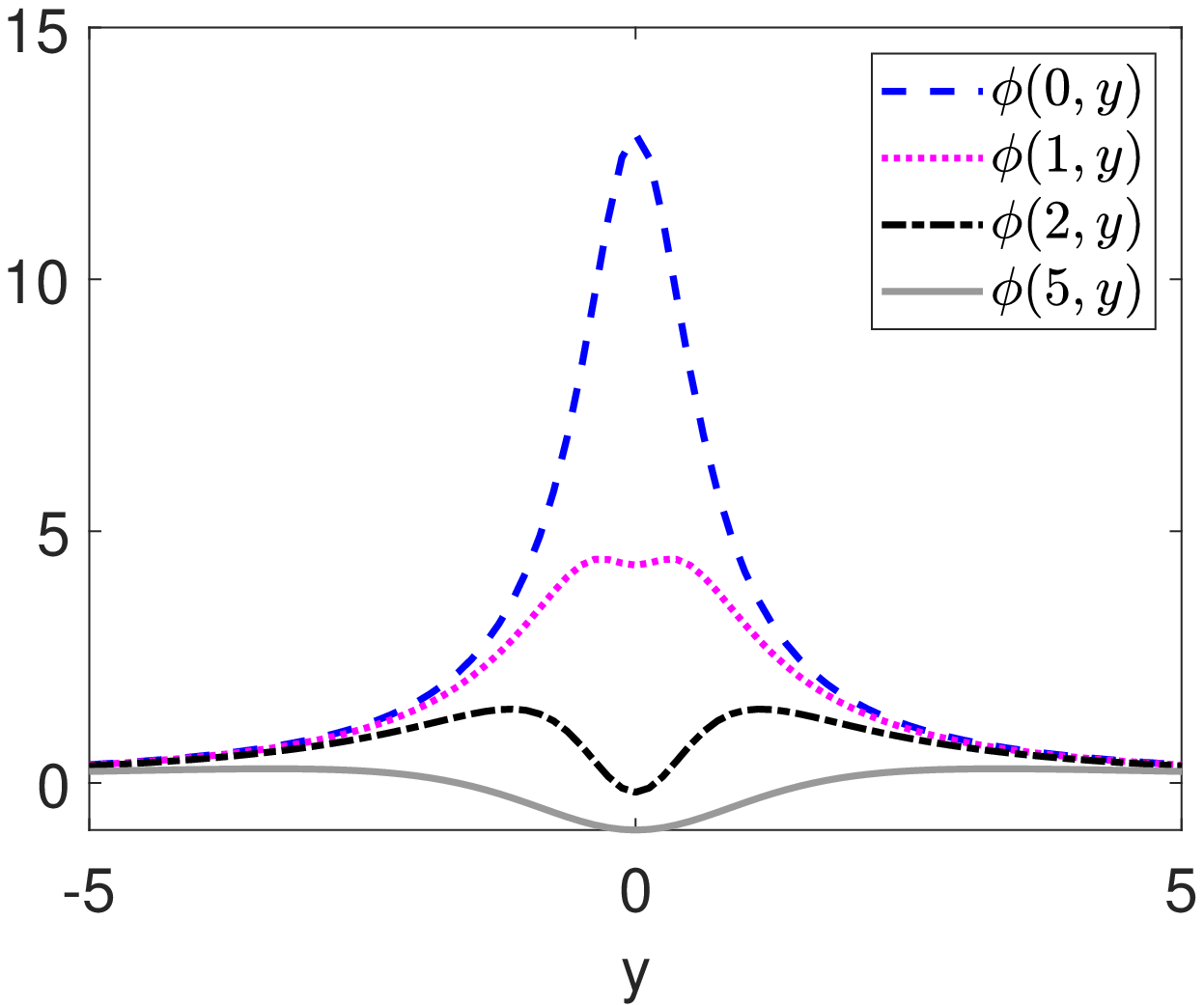}
	\end{minipage}
	\caption{Several $x$ and $y-$cross sections of the numerically generated lump solution  for $\alpha=1.35$.  }
	\label{alpha135-symmetry}
\end{figure}

\bigskip
	\section{Decay of lump solutions}\label{decay_sec}

Throughout this section, unless specifically stated otherwise, we assume that $\alpha>\frac{4}{5}$. The existence of lump solutions $u(t,x,y)=\phi(x-ct,y)$ for the fKP-I equation, where $\phi \in X_{\frac{\alpha}{2}}$, was proved in the previous section. The function $\phi$ satisfies the (rescaled) traveling wave fKP-I equation 
\begin{equation}\label{eq:eq}
-\phi_{xx}-\phi_{yy}- \mathrm{D}_x^\alpha \phi_{xx}+\frac{1}{2} (\phi^2)_{xx}=0,
\end{equation}
which can be written in convolution form as	\begin{equation}\label{eq:convolution}
		\phi=\frac{1}{2}K_\alpha*\phi^2,\qquad \hat K_\alpha(\xi_1,\xi_2)= m_\alpha(\xi_1,\xi_2),
	\end{equation}
where the symbol $m_\alpha$ is given by
\[
 m_\alpha(\xi_1,\xi_2)=\frac{\xi_1^2}{|\xi|^2+|\xi_1|^{\alpha +2}}.
\]
Let us recall from Remark \ref{rem:optimal} that any nontrivial, continuous solution $\phi$ of \eqref{eq:convolution} decays \emph{at most} quadratically. In this section, we show that any nontrivial solution $\phi\in X_{\frac{\alpha}{2}}$ of \eqref{eq:convolution} decays indeed \emph{exactly} quadratically, that is we prove Theorem \ref{thm:decay}.

\medskip

% \begin{remark}[A priori bound on the decay rate]\label{rem:apriori}
% \emph{
% An immediate consequence of the discontinuity of the symbol $m_\alpha$ at the origin is that any nontrivial, continuous solitary solution of \eqref{eq:eq} decays \emph{at most} quadratically. Let us assume for a contradiction that $\phi$ is a nontrivial, continuous solitary solution, which decays at infinity as $|\cdot|^{-\delta}$ for some $\delta>2$. Then $\phi \in L^1(\R^2)$, which implies that the Fourier  transformation of $\phi$ is continuous. But
% \[
% \hat \phi = \frac{1}{2}m_{\alpha}\hat{\phi^2}
% \]
% cannot be continuous at the origin, since $\hat{\phi^2}(0,0)>0$ and $m_\alpha$ is discontinuous at the origin.
% }
% \end{remark}

The idea is to study the kernel function $K_\alpha$ and to show that it has quadratic decay at infinity (independent of $\alpha$). Then the decay properties of $K_\alpha$ are used to show that also $\phi$ decays quadratically at infinity. In the sequel we denote by $r:\R^2 \to \R$ the function \[r(x,y)=|(x,y)|=(x^2+y^2)^{\frac{1}{2}}.\] It will be useful to note that for all $a\geq 1$ we have that $r^a$ is convex, so that 
\begin{equation}\label{eq:convex}
r^a(x,y)\lesssim r^a(x-\bar x, y-\bar y)+r^a(\bar x, \bar y)\qquad \mbox{for all}\quad (x,y),(\bar x, \bar y) \in \R^2.
\end{equation}
% Our main decay result reads as follows.

% \begin{theorem}\label{thm:decay} Any solutions $\phi \in X_{\frac{\alpha}{2}}$ of \eqref{eq:eq} satisfies
% \[
% r^2\phi \in L^\infty(\R^2).
% \]
% \end{theorem}

Notice that by \eqref{eq:convex} and Young's inequality
\begin{align*}
\|r^2 \phi\|_\infty &\lesssim \|r^2K_\alpha*\phi^2\|_\infty + \|K_\alpha * r^2\phi^2\|_\infty\\&
\lesssim \|r^2K_\alpha\|_\infty \|\phi\|_{L^2(\R^2)}^2 + \|K_\alpha\|_{L^q(\R^2)}\|r^2\phi^2\|_{L^{q^\prime}(\R^2)}
\end{align*}
for some $1\leq q,q^\prime\leq \infty$ with $1=\frac{1}{q}+\frac{1}{q^\prime}$,
so that the statement is proved provided that 
\begin{itemize}
\item[(A)] $r^2K \in L^\infty(\R^2)$
\item[(B)] there exists $1\leq q\leq \infty$ such that $K_\alpha \in L^q(\R^2)$ and $r^2\phi^2 \in L^{q^\prime}(\R^2)$, where $q^\prime$ is the dual conjugate of $q$.
\end{itemize}

Before studying the properties of the kernel function $K_\alpha$, we state the following  two lemmata, which yield some a priori regularity of lump solutions in the energy space.

\begin{lemma}\label{lem:smooth}
Any solution $\phi$ of \eqref{eq:convolution} in the energy space $X_{\frac{\alpha}{2}}$  satisfies $\phi \in L^r(\R^2)$ for all $2\leq r<\infty$ and
\[
\phi \in H^\infty(\R^2).
\]
In particular, $\phi$ is uniformly continuous and decays to zero at infinity.
\end{lemma}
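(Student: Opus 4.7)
The plan is a two-stage bootstrap, in the spirit of \cite[Theorem~3.1.2]{BonaLi} and \cite[Theorem~4.1]{deBouard}, based on the Fourier-side identity
\[
\hat\phi(\xi) = \tfrac{1}{2}m_\alpha(\xi)\widehat{\phi^2}(\xi), \qquad \|m_\alpha\|_{L^\infty(\R^2)}\leq 1, \qquad \Big(1+|\xi_1|^\alpha+\tfrac{\xi_2^2}{\xi_1^2}\Big)m_\alpha(\xi)=1.
\]
The starting point is $\phi\in X_{\alpha/2}\subset L^2(\R^2)\cap L^3(\R^2)$ by Proposition~\ref{technical_results}(ii). First I would upgrade the integrability of $\phi$ to $L^r(\R^2)$ for every finite $r\geq 2$, then the regularity to $H^s(\R^2)$ for every $s\geq 0$; uniform continuity and decay to zero at infinity then follow from the Sobolev embedding $H^s(\R^2)\hookrightarrow C_0(\R^2)$, valid for any $s>1$.

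For the integrability bootstrap, the key tool is the pointwise bound $|\hat\phi|\leq \tfrac{1}{2}|\widehat{\phi^2}|$. Given $\phi\in L^p(\R^2)$ with $p\in[2,4]$, H\"older produces $\phi^2\in L^{p/2}(\R^2)\subset L^1(\R^2)+L^2(\R^2)$, so Hausdorff--Young places $\widehat{\phi^2}$ in $L^\infty(\R^2)\cap L^{(p/2)^\prime}(\R^2)$ (interpreted as $L^\infty\cap L^2$ when $p=4$). Combining with $\hat\phi\in L^2(\R^2)$ via Plancherel and iterating the Hausdorff--Young/duality argument extracts $\phi\in L^{r}(\R^2)$ for some $r>p$; repeated iteration yields the full range $r\in[2,\infty)$. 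The main obstacle is that $m_\alpha$ is merely bounded (not smoothing) at the origin: in the supercritical regime $\alpha\in(\tfrac{4}{5},\tfrac{4}{3})$ the multiplier $m_\alpha$ is not in $L^2(\R^2)$, so Young's inequality for $K_\alpha\ast\phi^2$ cannot be invoked directly and one must supplement the iteration with a frequency-space decomposition of $m_\alpha$ into low- and high-frequency pieces, handled separately.

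Once $\phi\in L^r(\R^2)$ for every finite $r\geq 2$, in particular $\phi\in L^4(\R^2)$ and $\widehat{\phi^2}\in L^2(\R^2)$, the weight identity yields
\[
\Big\|\big(1+|\xi_1|^\alpha+\tfrac{\xi_2^2}{\xi_1^2}\big)\hat\phi\Big\|_{L^2(\R^2)} = \tfrac{1}{2}\|\widehat{\phi^2}\|_{L^2(\R^2)} = \tfrac{1}{2}\|\phi\|_{L^4(\R^2)}^2,
\]
so $D_x^\alpha\phi,\, \partial_x^{-2}\partial_y^2\phi\in L^2(\R^2)$. Applying $D_x^\alpha$ to \eqref{eq:convolution} and using Leibniz' rule for fractional derivatives (cf.\ the proof of Proposition~\ref{localization}) together with the $L^r$-bounds just established gains another $\alpha$ derivatives in $x$; the algebraic relation $\xi_2^2\hat\phi = \tfrac{1}{2}\xi_1^2\widehat{\phi^2}-\xi_1^2\hat\phi-|\xi_1|^{\alpha+2}\hat\phi$ trades $y$-derivatives for $x$-derivatives of the nonlinearity, controlling $\partial_y^2\phi$. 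Iterating in both variables produces $\phi\in H^{k\alpha}(\R^2)$ for every $k\in\N$, whence $\phi\in H^\infty(\R^2)$ by interpolation. Finally, Sobolev embedding delivers the claimed uniform continuity and decay to zero at infinity.
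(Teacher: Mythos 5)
The second stage of your argument (the regularity bootstrap via the weight identity, Leibniz' rule for fractional derivatives, and trading $\xi_2^2$ for $\xi_1^2$ through the equation) is essentially sound and close in spirit to what the paper does, which bootstraps through the convolution identities $\mathrm{D}_x^\alpha\phi=\tfrac12 K_\alpha*\mathrm{D}_x^\alpha\phi^2$ and $\phi_y=\tfrac12 K_\alpha*(\phi^2)_y$ using that $m_\alpha$, $|\xi_1|^\alpha m_\alpha$ and $\xi_2 m_\alpha$ are H\"ormander--Mikhlin multipliers. The problem is the first stage.

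Your integrability bootstrap does not work as described. The mechanism you propose is the pointwise bound $|\hat\phi|\leq\tfrac12|\widehat{\phi^2}|$ together with Hausdorff--Young. But Hausdorff--Young only maps $L^q$ to $L^{q'}$ for $1\leq q\leq 2$: from $\phi\in L^p$ with $p\in[2,4]$ you get $\phi^2\in L^{p/2}$ and hence $\widehat{\phi^2}\in L^{(p/2)'}$ with $(p/2)'\geq 2$, so all you ever learn is that $\hat\phi$ lies in $L^s$ for various $s\geq 2$. Knowledge of $\hat\phi\in L^s$ with $s>2$ gives no information about $\phi\in L^r$ for any $r>2$, since the Fourier transform is not bounded from $L^s$ to $L^{s'}$ in that range; combining with Plancherel only reproduces $\phi\in L^2$. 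So the exponent never increases and the iteration stalls at the starting point $\phi\in L^2\cap L^3$. Your proposed fix --- a low/high frequency splitting of $m_\alpha$ --- handles the low-frequency (compactly supported, hence $L^1\cap L^\infty$) piece via Young's inequality, but the high-frequency piece $(1-\chi)m_\alpha$ lies in $L^p$ only for $p>\tfrac{2}{\alpha}+\tfrac12$ (Lemma \ref{lem:m}), which exceeds $2$ precisely in the supercritical range $\alpha\leq\tfrac43$, so Hausdorff--Young cannot be applied to it either; the splitting does not close the gap where it matters. The paper's actual mechanism is different in kind: it uses the H\"ormander--Mikhlin theorem to conclude $\phi,\mathrm{D}_x^\alpha\phi,\phi_y\in L^{3/2}$, i.e.\ it converts the decay of $m_\alpha$ at high frequency into a \emph{derivative gain} at fixed integrability, and then invokes the anisotropic Gagliardo--Nirenberg inequality of Esfahani \cite[Theorem 1.1]{Esfahani} to convert that derivative gain into improved integrability ($\phi\in L^r$ for $2\leq r<\tfrac92$ after one step, all finite $r$ after two). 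Some substitute for this derivative-gain-plus-embedding step --- or for the kernel estimates $K_\alpha\in L^r$, $1<r<\tfrac{8+2\alpha}{8-\alpha}$, proved later in Proposition \ref{prop:K_reg} --- is needed before your second stage can start.
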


\begin{proof}
Let us start by repeating the Hörmander--Mikhilin multiplier theorem \cite{Lizorkin}, which states that if  $f:\R^2 \to \R$ is a function, which is smooth outside the origin and 
\[
\xi \mapsto \xi_1^{k_1}\xi_2^{k_2} \frac{\mathrm{d}^k}{\mathrm{d} \xi_1^{k_1}\mathrm{d} \xi_2^{k_2}}f(\xi)
\]
 is bounded on $\R^2$ for all $k_1,k_2 \in \{0,1\}$ with $k=k_1+k_2 \in\{0,1,2\}$, then $f$ is a Fourier multiplier on $L^p(\R^2)$ for all $1<p<\infty$, i.e. the operator $T_f$ defined by $T_f g= \mathcal F^{-1}\left(f \hat g\right)= \mathcal{F}^{-1}(f)*g$ is bounded on $L^p(\R^2)$.
By the Hörmander--Mikhilin multiplier theorem, it is easy to check that the functions
\[
\xi \mapsto m_\alpha(\xi),\qquad \xi \mapsto |\xi_1|^\alpha m_\alpha(\xi),\qquad \xi \mapsto \xi_2m_\alpha(\xi)
\]
are Fourier multipliers on $L^p(\R^2)$ for $1<p<\infty$.
Let $\phi \in  X_{\frac{\alpha}{2}}$. Due to Proposition \ref{technical_results} (ii), we have that $\phi \in L^3(\R^2)$, which implies  $\phi^2 \in L^{\frac{3}{2}}(\R^2)$.
Since
\begin{align*}
\phi = \frac{1}{2}\mathcal{F}^{-1}(m_\alpha)*\phi^2, \qquad 
\mathrm{D}^\alpha_x \phi = \frac{1}{2}\mathcal{F}^{-1}(|\xi_1|^\alpha m_\alpha)*\phi^2,\qquad
\phi_y = -\frac{\ii}{2}\mathcal{F}^{-1}(\xi_2m_\alpha)*\phi^2,
\end{align*}
we find 
\[
\phi, \mathrm{D}^\alpha_x \phi, \phi_y \in L^{\frac{3}{2}}(\R^2).
\]
In particular, $\phi$ belongs to the anisotropic Sobolev space $W^{\vec{\alpha}, \frac{3}{2}}(\R^2)$ for $\vec \alpha = (\alpha,1)$, where
\[
W^{\vec{\alpha}, q}(\R^2):=\{\phi \in L^q(\R^2)\mid \mathrm{D}_x^{\alpha_1} \phi, \mathrm{D}_y^{\alpha_2}\phi \in L^q(\R^2)\}.
\]
We use the following anisotropic Gagliardo--Nirenberg inequality for fractional derivatives \cite[Theorem 1.1]{Esfahani}: If $\phi \in W^{\vec{\alpha}, q}(\R^2)$ with  $A:=\frac{1}{q}\left(\frac{1}{\alpha_1}+\frac{1}{\alpha_2}\right)-1>0$ and $M:=1+(\frac{1}{p}-\frac{1}{q})\left(\frac{1}{\alpha_1}+\frac{1}{\alpha_2}\right)>0$, then 
\begin{equation}\label{eq:fGN}
\|\phi\|_{L^r(\R^2)}\lesssim \|\phi\|_{L^p(\R^2)}^{1-\theta}\|\mathrm{D}_x^{\alpha_1}\phi\|_{L^q(\R^2)}^{\theta_1}\|\mathrm{D}_y^{\alpha_2}\phi\|_{L^q(\R^2)}^{\theta_2},
\end{equation}
for all 
\[
p\leq r<r_*:=\frac{1}{A }\left(\frac{1}{\alpha_1}+\frac{1}{\alpha_2}\right),
\]
where $\theta = \theta_1 + \theta_2$ and $\theta_i= (\frac{1}{p}-\frac{1}{r})(\alpha_i M)^{-1}$. Applied to the situation at hand, we can choose $p=2$, $q=\frac{3}{2}$ and $\vec \alpha=(\alpha,1)$ for $\alpha = \frac{4}{5+4\e}$ with $\e>0$ arbitrarily small, which yields $A= \frac{1}{2}+\frac{2}{3}\e$ and $M=\frac{5}{8}-\frac{1}{6}\e$. Due to \eqref{eq:fGN} we find that
\[
\phi \in L^r(\R^2)\qquad \mbox{for all} \quad 2\leq r < \frac{9}{2}.
\]
Repeating the same argument for $\phi \in L^{\frac{9}{2}-2\e}(\R^2)$ with $\phi^2 \in L^{\frac{9-4\e}{4}}(\R^2)$, we find that $\phi \in W^{(\alpha,1), \frac{9-4\e}{4}}(\R^2)$ and again by the fractional Gagliardo--Nirenberg inequality for $p=2$, $q=\frac{9-4\e}{4}$, $\vec \alpha=(\alpha,1)$ for  $\alpha = \frac{4}{5+4\e}$, we obtain that $A=\frac{8\e}{9-4\e}$ and $M=\frac{9}{8}+\frac{\e}{2}\frac{4\e+7}{4\e-9}$, so that
\begin{equation}\label{eq:phi_r}
\phi \in L^r(\R^2)\qquad \mbox{for all} \quad 2\leq r < \infty,
\end{equation}
by letting $\e \to 0$. This proves the first assertion. The relation \eqref{eq:phi_r} implies by the Fourier multiplier theorem that
\[
\phi, \mathrm{D}_x^\alpha \phi, \phi_y \in L^r(\R^2)\qquad \mbox{for all} \quad 2\leq r < \infty.\]
Next, we aim to bootstrap the smoothness. By Hölder's inequality it is clear also that  $(\phi^2)_y \in L^r(\R^2)$ for all $2\leq r<\infty$. Due to the Leibniz rule for fractional derivatives (see e.g. \cite[Theorem 7.6.1]{Grafakos}) and Hölder's inequality we can estimate
\begin{align*}
\|\mathrm{D}_x^\alpha \phi^2\|_{L^r(\R^2)}^r &= \int_{\R} \|\mathrm{D}_x^\alpha\phi (\cdot, y)\|_{L^r(\R)}^r\, \mathrm{d}y\\
&\lesssim \int_{\R}\|\mathrm{D}_x^\alpha\phi (\cdot, y)\|_{L^{2r}(\R)}^r\|\phi (\cdot, y)\|_{L^{2r}(\R)}^r\, \mathrm{d}y\\
&\leq \left(\int_{\R}\|\mathrm{D}_x^\alpha\phi (\cdot, y)\|_{L^{2r}(\R)}\,\mathrm{d}y\right)^\frac{1}{2}\left(\int_{\R}\|\phi (\cdot, y)\|_{L^{2r}(\R)}^{2r}\,\mathrm{d}y\right)^\frac{1}{2}\\
&= \|\mathrm{D}_x^\alpha \phi\|_{L^{2r}(\R^2)}^\frac{r}{2}\|\phi\|_{L^{2r}(\R^2)}^\frac{r}{2},
\end{align*}
which yields that also $\mathrm{D}_x^\alpha \phi^2 \in L^r(\R^2)$ for all $2\leq r <\infty$. This can be used to bootstrap the smoothness of $\phi$, by using
\[
\mathrm{D}^\alpha_x \phi = \frac{1}{2}K_\alpha* \mathrm{D}^\alpha_x \phi^2\qquad \mbox{and} \qquad \phi_y = \frac{1}{2}K_\alpha*(\phi^2)_y.
\]
Reiterating the argument yields
\[
\phi, \mathrm{D}_x^\alpha\phi, \phi_y, \mathrm{D}_x^{2\alpha} \phi,\mathrm{D}_x^{\alpha}\phi_y, \phi_{yy} \in L^r(\R^2)\qquad \mbox{for all} \quad 2 \leq r < \infty.
\]
and eventually $\mathrm{D}_x^k \phi \in L^r(\R^2)$ for all $2\leq r <\infty$ and $k\in \N$, which implies that $\phi \in H^\infty(\R^2)$. Eventually, since $H^\infty(\R^2)$ is embedded into the space of uniformly continuous functions on $\R^2$ and $\phi$ is $L^2(\R^2)$-integrable, we deduce that $\phi$ decays to zero at infinity.
\end{proof}
 
	\begin{lemma}\label{lem:est}
		Any solution $\phi$ of \eqref{eq:eq} in the energy space $X_{\frac{\alpha}{2}}$  satisfies
		\[
		\int_{\R^2} (x^2+y^2)(|\phi_x|^2+|\phi_y|^2+|\mathrm{D}^\frac{\alpha}{2}_x\phi_x|^2)\,\mathrm{d}(x,y)<\infty.
		\]
	\end{lemma}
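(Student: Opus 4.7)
The plan is to test equation \eqref{eq:eq} against $\rho_R^2 \phi$ for a suitably smoothed and truncated weight $\rho_R$ approximating $r$, perform integration by parts to extract the desired quantities, and then pass to the limit $R\to\infty$. Concretely, choose $\rho_R \in C_c^\infty(\R^2)$ with $\rho_R\nearrow r$ as $R\to\infty$, $|\nabla \rho_R|\lesssim 1$, and $|D^2\rho_R|\lesssim \min(1, r^{-1})$, all uniformly in $R$. The regularity $\phi \in H^\infty(\R^2)$ established in Lemma \ref{lem:smooth} rigorously justifies every integration by parts below.

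Testing \eqref{eq:eq} against $\rho_R^2\phi$, the local second-order terms $-\phi_{xx}\rho_R^2\phi$ and $-\phi_{yy}\rho_R^2\phi$ produce the main contributions $\int \rho_R^2 \phi_x^2\,\mathrm{d}(x,y)$ and $\int \rho_R^2 \phi_y^2\,\mathrm{d}(x,y)$, plus cross terms of the form $\int \rho_R(\rho_R)_x \phi\phi_x\,\mathrm{d}(x,y)$ that are controlled by Cauchy--Schwarz and Young's inequality using $|\nabla\rho_R|\lesssim 1$ and $\phi\in L^2(\R^2)$. For the nonlocal term, one integration by parts in $x$ followed by the self-adjointness of $\mathrm{D}_x^{\alpha/2}$ produces $\int \rho_R^2 |\mathrm{D}_x^{\alpha/2}\phi_x|^2\,\mathrm{d}(x,y)$, together with a fractional commutator $\int \mathrm{D}_x^{\alpha/2}\phi_x \cdot [\mathrm{D}_x^{\alpha/2},\rho_R^2]\phi_x\,\mathrm{d}(x,y)$ and further cross terms involving $\mathrm{D}_x^\alpha\phi_x\cdot \rho_R(\rho_R)_x\phi$. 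The nonlinear contribution $\tfrac{1}{2}\int (\phi^2)_{xx}\rho_R^2\phi$, after integrating by parts once, reduces to $-\int \rho_R^2\phi\phi_x^2\,\mathrm{d}(x,y)$ plus a lower-order remainder; the key observation is that Lemma \ref{lem:smooth} yields $\phi(x,y)\to 0$ as $|(x,y)|\to\infty$, so for every $\varepsilon>0$ there exists $R_0$ with $|\phi|\leq\varepsilon$ on $\{r>R_0\}$. Splitting the integration at radius $R_0$ allows one to absorb the outer contribution into the main term $\int \rho_R^2 \phi_x^2$ by taking $\varepsilon$ small, while the inner contribution is bounded uniformly in $R$ by $R_0^2\|\phi\|_{L^\infty}\|\phi_x\|_{L^2}^2$.

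The principal obstacle is controlling the fractional commutator $[\mathrm{D}_x^{\alpha/2},\rho_R^2]\phi_x$, since a direct Kato--Ponce inequality produces a constant proportional to $\|\partial_x(\rho_R^2)\|_{L^\infty}\sim R$, which diverges as $R\to\infty$. This is resolved by a refined estimate exploiting the symmetric factorization $\rho_R^2 = \rho_R\cdot\rho_R$ together with the pointwise Lipschitz bound $|\rho_R(x,y)-\rho_R(x',y)|\leq|x-x'|$; pairing against $\mathrm{D}_x^{\alpha/2}\phi_x$ then distributes one factor of $\rho_R$ onto each side of the commutator and yields a quantity absorbable into $\int \rho_R^2 |\mathrm{D}_x^{\alpha/2}\phi_x|^2$ via Young's inequality with a small coefficient. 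The residual cross terms involving $\mathrm{D}_x^\alpha\phi_x\cdot\rho_R(\rho_R)_x\phi$ are treated analogously, using that $\rho_R(\rho_R)_x$ and $\partial_x(\rho_R(\rho_R)_x)$ are bounded uniformly in $R$ by construction of $\rho_R$. Combining all estimates gives a bound on $\int \rho_R^2 (\phi_x^2 + \phi_y^2 + |\mathrm{D}_x^{\alpha/2}\phi_x|^2)\,\mathrm{d}(x,y)$ uniform in $R$, and Fatou's lemma upon $R\to\infty$ completes the proof.
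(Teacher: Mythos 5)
Your overall scheme---test the equation against a quadratically weighted copy of $\phi$, absorb the resulting cubic term $\int \rho_R^2\phi\phi_x^2$ using the decay of $\phi$ from Lemma \ref{lem:smooth}, and pass to the limit in the truncation---matches the paper's, but you treat the nonlocal term by a genuinely different mechanism. The paper multiplies by $x^2\phi$ and $y^2\phi$ \emph{separately}: the weight $y^2$ commutes with $\mathrm{D}_x^\alpha$ outright, and for the weight $x^2$ the commutator with $|\xi_1|^\alpha$ is computed \emph{exactly} on the Fourier side (Lemma \ref{lem:frac_int}), leaving only an explicit lower-order multiple of $\int(\mathrm{D}_x^{\frac{\alpha}{2}}\phi)^2$, which is finite since $\phi\in X_{\frac{\alpha}{2}}$; no commutator \emph{estimate} is ever needed. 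Your route, with a single Lipschitz weight $\rho_R\approx r$, instead hinges on a Calder\'on-type bound of the form $\|[\mathrm{D}_x^{s},g]f\|_{L^2}\lesssim\|\partial_x g\|_{L^\infty}\|f\|_{H^{s-1}}$ (applied for a.e.\ fixed $y$), so that only the Lipschitz seminorm of the weight, and not its size, enters---this is precisely your ``distribute one factor of $\rho_R$ to each side'' step. That step does work, but it is the crux of your argument and you assert it rather than prove or cite it. What your version buys is an honest truncation (the paper explicitly ``proceeds formally by omitting the truncation function'') and a single weight in place of two computations; what it costs is dependence on a nontrivial commutator lemma that must actually be supplied.

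Two concrete soft spots. First, your claim that $\rho_R(\rho_R)_x$ is ``bounded uniformly in $R$'' is false: wherever $\rho_R=r$ one has $\rho_R(\rho_R)_x=x$, which grows linearly. The cross term survives anyway, since $|\rho_R(\rho_R)_x|\le\rho_R$ lets you pair it against $\rho_R\mathrm{D}_x^{\frac{\alpha}{2}}\phi_x$ after moving one factor $\mathrm{D}_x^{\frac{\alpha}{2}}$ across, and only $\partial_x\bigl(\rho_R(\rho_R)_x\bigr)$, which \emph{is} uniformly bounded under your hypotheses on $\rho_R$, enters the commutator bound---but the justification as written is wrong and needs to be restated in terms of the Lipschitz seminorm. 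Second, for $\alpha>2$ (the lemma is claimed for all $\alpha>\frac45$) the commutator $[\mathrm{D}_x^{\frac{\alpha}{2}},\rho_R]$ has positive order, so the estimate you invoke requires higher derivatives of $\rho_R$ and of $\phi_x$; this is harmless given $\phi\in H^\infty(\R^2)$, but it should be addressed. Neither issue breaks the argument; both need repair before this is a proof.
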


 \begin{proof}
 The proof follows essentially the lines in \cite[Lemma  3.1]{deBouard}. Here, we proceed formally by omitting the truncation function at infinity. First, let us multiply \eqref{eq:eq} by $x^2\phi$ and integrate over $\R^2$. Then
 \[
0= \int_{\R^2} x^2 \phi \left(-\phi_{xx}-\phi_{yy} -\mathrm{D}^\alpha_x \phi_{xx}+\frac{1}{2}\left(\phi^2\right)_{xx}\right)\,\mathrm{d}(x,y).
 \]
 Using integration by parts we find that
 \begin{align*}
 \int_{\R^2} x^2 \phi \left(-\phi_{xx}-\phi_{yy}+\frac{1}{2}\left(\phi^2\right)_{xx}\right)\,\mathrm{d}(x,y)=\int_{\R^2} x^2 \left(\phi_x^2+ \phi_y^2-\phi \phi_x^2\right) -\phi^2 +\frac{2}{3}\phi^3 \,\mathrm{d}(x,y).
 \end{align*}
In view of Lemma \ref{lem:frac_int}, the nonlocal part can be written as
 \begin{align*}
 -\int_{\R^2}x^2\phi \mathrm{D}^\alpha_x \phi_{xx}\, d(x,y)&=-\int_{\R^2} x^2\phi_{xx}\mathrm{D}^\alpha_x \phi\,\mathrm{d}(x,y)-\int_{\R^2} 4x\phi_x \mathrm{D}^\alpha_x \phi\,d(x,y)-\int_{\R^2}  2\phi \mathrm{D}^\alpha_x \phi\,\mathrm{d}(x,y)\\
 &=  \int_{\R^2}x^2\left(\mathrm{D}^{\frac{\alpha}{2}}\phi_x\right)^2\,\mathrm{d}(x,y) - \frac{1}{4}(\alpha+2)^2\int_{\R^2}\left(\mathrm{D}^{\frac{\alpha}{2}}\phi\right)^2\,\mathrm{d}(x,y).
 \end{align*}
 Adding the above equalities we obtain that
 \begin{align}\label{eq:x}
 \int_{\R^2} x^2 \left(\phi_x^2+ \phi_y^2+\left(\mathrm{D}^{\frac{\alpha}{2}}\phi_x\right)^2\right) \,\mathrm{d}(x,y)=\int_{\R^2}x^2\phi\phi_x^2 + \phi^2 -\frac{2}{3}\phi^3+\frac{1}{4}(\alpha+2)^2\left(\mathrm{D}^{\frac{\alpha}{2}}\phi\right)^2\,\mathrm{d}(x,y)
 \end{align}
 Multiplying \eqref{eq:eq} by $y^2\phi$ instead yields
 \[
\int_{\R^2} y^2 \phi \left(-\phi_{xx}-\phi_{yy} -\mathrm{D}^\alpha_x \phi_{xx}+\frac{1}{2}\left(\phi^2\right)_{xx}\right)\,\mathrm{d}(x,y)=0.
 \]
 Again, using integration by parts, we find that
 \begin{align}\label{eq:y}
\int_{\R^2}y^2 \left(\phi_x^2+\phi_y^2+\left(\mathrm{D}^\frac{\alpha}{2}_x\phi_x\right)^2\right)\, \mathrm{d}(x,y) = \int_{\R^2} \phi^2+y^2\phi \phi_x^2\,\mathrm{d}(x,y).
 \end{align}
 Adding \eqref{eq:x} and \eqref{eq:y}, while keeping in mind that $\phi \in X_{\frac{\alpha}{2}}$, we can estimate
\[
\int_{\R^2} (x^2+y^2)(|\phi_x|^2+|\phi_y|^2+|\mathrm{D}^\frac{\alpha}{2}_x\phi_x|^2)\, \mathrm{d}(x,y)\lesssim 1 + \int_{\R^2} (x^2+y^2)\phi\phi_x^2\,\mathrm{d}(x,y).
\]
Using that $\phi$ is continuous and tends to zero at infinity (see Lemma \ref{lem:smooth}), there exists $R>0$ such that $\phi(x,y)\leq \frac{1}{2}$ for $|(x,y)|\geq R$ and we conclude 
\[
\int_{\R^2} (x^2+y^2)(|\phi_x|^2+|\phi_y|^2+|\mathrm{D}^\frac{\alpha}{2}_x\phi_x|^2)\lesssim_R 1.
\]
  
\end{proof}

\subsubsection*{Properties of the kernel function $K_\alpha$.}
	We will first concentrate on the regularity properties of $K_\alpha$.
\begin{lemma}\label{lem:m}
	$m_\alpha\in L^p(\R^2)$ if and only if $p>\frac{2}{\alpha}+\frac{1}{2}$.
\end{lemma}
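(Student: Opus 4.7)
The plan is to apply Fubini to integrate in $\xi_2$ first for each fixed $\xi_1$, extract a rescaled version of a single-variable integral, and then analyze the resulting one-dimensional integral in $\xi_1$ near zero and near infinity.

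First I would fix $\xi_1 \neq 0$ and make the substitution $\xi_2 = t\sqrt{\xi_1^2 + |\xi_1|^{\alpha+2}}$ in
\[
\int_{\R} m_\alpha(\xi_1,\xi_2)^p \,\mathrm{d}\xi_2 = \int_{\R}\frac{|\xi_1|^{2p}}{\left(\xi_1^2+\xi_2^2+|\xi_1|^{\alpha+2}\right)^p}\,\mathrm{d}\xi_2,
\]
which gives the identity
\[
\int_{\R} m_\alpha(\xi_1,\xi_2)^p \,\mathrm{d}\xi_2 = |\xi_1|^{2p}\left(\xi_1^2 + |\xi_1|^{\alpha+2}\right)^{\frac{1}{2}-p}\int_{\R}\frac{\mathrm{d}t}{(1+t^2)^p}.
\]
The $t$-integral is finite if and only if $p>\tfrac{1}{2}$; if $p\leq \tfrac12$, then the inner integral diverges for every $\xi_1\neq 0$ and hence $m_\alpha\notin L^p(\R^2)$, which already handles half of the ``only if'' direction.

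Assuming now $p>\tfrac12$, by Fubini it remains to study
\[
J(p,\alpha):=\int_{\R}|\xi_1|^{2p}\left(\xi_1^2+|\xi_1|^{\alpha+2}\right)^{\frac{1}{2}-p}\,\mathrm{d}\xi_1.
\]
Since $\alpha+2>2$, for $|\xi_1|\leq 1$ we have $\xi_1^2+|\xi_1|^{\alpha+2}\eqsim \xi_1^2$, so the integrand behaves like $|\xi_1|$ near zero, which is harmless. For $|\xi_1|\geq 1$ we have $\xi_1^2+|\xi_1|^{\alpha+2}\eqsim |\xi_1|^{\alpha+2}$ (since $\alpha>0$), so the integrand behaves like
\[
|\xi_1|^{2p}\cdot |\xi_1|^{(\alpha+2)(\frac{1}{2}-p)}=|\xi_1|^{\frac{\alpha+2}{2}-\alpha p}.
\]
Integrability at infinity is therefore equivalent to $\frac{\alpha+2}{2}-\alpha p<-1$, which simplifies precisely to $p>\frac{2}{\alpha}+\frac{1}{2}$. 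Since the comparisons above are two-sided (i.e.\ $\eqsim$, not just $\lesssim$), this also yields the ``only if'' direction in the remaining range $\tfrac{1}{2}<p\leq \tfrac{2}{\alpha}+\tfrac12$.

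There is no real obstacle here beyond bookkeeping; the only point one has to be a little careful about is first handling the $\xi_2$-integral (whose convergence itself requires $p>\tfrac12$) before reducing to the one-dimensional $\xi_1$-integral. The two-sided asymptotic equivalence of $\xi_1^2+|\xi_1|^{\alpha+2}$ with its dominating term in each of the two scales $|\xi_1|\leq 1$ and $|\xi_1|\geq 1$ ensures that the threshold $p=\tfrac{2}{\alpha}+\tfrac12$ is sharp, thereby establishing the stated equivalence.
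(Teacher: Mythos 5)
Your argument is correct and is essentially the paper's own proof: both apply Fubini and the rescaling substitution $\xi_2=t\bigl(\xi_1^2+|\xi_1|^{\alpha+2}\bigr)^{1/2}$ (equivalently $z=\xi_2/(|\xi_1|(1+|\xi_1|^\alpha)^{1/2})$ after factoring out $\xi_1^2$), reducing everything to the one-dimensional integral $\int_{\R}|\xi_1|\,(1+|\xi_1|^\alpha)^{\frac12-p}\,\mathrm{d}\xi_1$, whose behaviour at infinity yields the sharp threshold $p>\frac{2}{\alpha}+\frac12$. Your explicit treatment of the case $p\le\frac12$ and of the two-sidedness of the comparisons is a welcome bit of extra care that the paper leaves implicit.
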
	

\begin{proof}
It is clear that $m_\alpha \in L^\infty(\R^2)$. Let us  compute
\begin{align*}
\|m_\alpha\|_{L^p(\R^2)}^p &= \int_{\R^2} \frac{1}{\left(1+|\xi_1|^\alpha + \frac{\xi_2^2}{\xi_1^2}\right)^p}\, \mathrm{d}(x,y)\\
&= \int_{\R} \frac{1}{\left(1+|\xi_1|^\alpha\right)^p} \int_{\R} \frac{1}{\left(1+\frac{\xi_2^2}{(1+|\xi_1|^\alpha)\xi_1^2}\right)^p} \, \mathrm{d}\xi_2\,\mathrm{d}\xi_1 \\
&=\int_{\R} \frac{|\xi_1|}{\left(1+|\xi_1|^\alpha\right)^{p-\frac{1}{2}}} \,\mathrm{d}\xi_1 \int_{\R} \frac{1}{\left(1+z^2\right)^p}\,\mathrm{d}z,
\end{align*}
where we used the change of variables $z=\tfrac{\xi_2}{|\xi_1|(1+|\xi_1|^\alpha)^\frac{1}{2}}$.
Since the second integral above is clearly convergent for any $p\geq 1$, we find that $m_\alpha \in L^p(\R^2)$ if and only if
\[
p>\frac{2}{\alpha}+\frac{1}{2}.
\]
\end{proof}

\begin{remark}
\emph{
The above lemma implies that $m_\alpha \in L^2(\R^2)$ if and only if $\alpha>\frac{4}{3}$, which is the $L^2$-critical exponent. In this case it follows immediately, that also $K_\alpha \in L^2(\R^2)$ and the proof of Theorem \ref{thm:decay} can be done essentially by following the lines in \cite{deBouard}. In the supercritical case $\tfrac{4}{5}<\alpha\leq \frac{4}{3}$, which in particular includes the Benjamin--Ono KP equation for $\alpha=1$, the symbol $m_\alpha$ belongs to an $L^p$-space with $p>2$ so that the integrability properties of the kernel $K_\alpha$ are a priori not clear.
}
\end{remark}

\begin{lemma}\label{lem:K_smoothness}
The kernel function $K_\alpha$ is smooth outside the origin.
\end{lemma}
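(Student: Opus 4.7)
My plan is to prove $K_\alpha \in C^\infty(\R^2\setminus\{0\})$ by establishing that, for every multi-index $\beta \in \N^2$ and every $N \in \N$, the function $r^{2N}\,\partial^\beta K_\alpha$ lies in $L^\infty(\R^2)$. Granting this, on any annulus around the origin one obtains uniform bounds on all $\partial^\beta K_\alpha$, and a standard regularization-and-passage-to-the-limit argument --- for example, replacing $m_\alpha$ by $m_\alpha^\e := m_\alpha\, e^{-\e|\xi|^2}$ (which makes $K_\alpha^\e$ Schwartz), proving uniform-in-$\e$ bounds, and sending $\e\downarrow 0$ --- upgrades the pointwise bounds to continuity and yields the claim.

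Since $r^{2N}\partial^\beta K_\alpha = \mathcal F^{-1}\!\bigl[(-\Delta_\xi)^N((i\xi)^\beta m_\alpha)\bigr]$, it suffices to prove $(-\Delta_\xi)^N\bigl[(i\xi)^\beta m_\alpha\bigr] \in L^1(\R^2)$ for all $N$ and $\beta$. The concrete approach is to expand the resolvent via the geometric series
\[
\frac{1}{|\xi|^2+|\xi_1|^{\alpha+2}}=\sum_{k=0}^{M}(-1)^k\frac{|\xi_1|^{k(\alpha+2)}}{|\xi|^{2(k+1)}}+(-1)^{M+1}\frac{|\xi_1|^{(M+1)(\alpha+2)}}{|\xi|^{2(M+1)}(|\xi|^2+|\xi_1|^{\alpha+2})},
\]
which produces $m_\alpha=\sum_{k=0}^{M} m_k + R_M$ with $m_k(\xi) = (-1)^k \xi_1^2|\xi_1|^{k(\alpha+2)}/|\xi|^{2(k+1)}$ positively homogeneous of degree $k\alpha$. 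The inverse Fourier transform of each $m_k$ is a homogeneous distribution of degree $-k\alpha-2$ on $\R^2\setminus\{0\}$, while the remainder $R_M$ vanishes at the origin to order $(M+1)\alpha+2$ and decays at infinity. Choosing $M$ so large that $(M+1)(\alpha+2)>2N+|\beta|$ ensures that $R_M$ has enough classical derivatives across the line $\xi_1=0$ for $(-\Delta_\xi)^N[(i\xi)^\beta R_M]$ to be an honest $L^1$-function; its integrability is then verified by splitting the $\xi$-plane into $\{|\xi_1|^{\alpha+2}\leq|\xi|^2\}$ and its complement, where $R_M$ admits distinct asymptotic forms that are easy to bound by direct computation.

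The main obstacle is the non-smoothness of $m_\alpha$ on the line $\xi_1=0$ caused by the non-integer exponent in $|\xi_1|^{\alpha+2}$: repeated differentiation in $\xi_1$ produces Hadamard finite-part distributional contributions supported on this line. Similarly, each $m_k$ is only $C^{\lfloor k(\alpha+2)+2\rfloor}$ in the angular variable $\theta$ of polar coordinates, so the homogeneity theory delivers only finite smoothness of each $\mathcal F^{-1}(m_k)$ away from the origin. The delicate point is therefore either to track cancellations among the non-smooth contributions of the pieces $\mathcal F^{-1}(m_k)$ across the summation, or equivalently --- as encoded in the regularization scheme above --- to fix any target number $2N+|\beta|$ of derivatives and pick $M=M(N,|\beta|)$ large enough that all non-smoothness is pushed into the remainder and absorbed by the $L^1$-estimate. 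This coupling of the decomposition order $M$ to the number of derivatives under control, combined with the regularization argument, is what I expect to be the most delicate part of the proof.
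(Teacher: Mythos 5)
Your headline reduction is to show that $r^{2N}\partial^\beta K_\alpha\in L^\infty(\R^2)$ for \emph{every} $N\in\N$ and every multi-index $\beta$, but already the case $\beta=0$, $N=2$ is false: if $r^4K_\alpha$ were bounded then $K_\alpha$ would be integrable at infinity, and since $K_\alpha$ is integrable near the origin (Proposition \ref{prop:K_reg}) it would follow that $K_\alpha\in L^1(\R^2)$ and hence that $m_\alpha=\hat K_\alpha$ is continuous --- contradicting the discontinuity of $m_\alpha$ at the origin ($m_\alpha(\xi_1,0)\to 1$ while $m_\alpha(0,\xi_2)\equiv 0$; this is exactly the mechanism of Remark \ref{rem:optimal}, and the decay of $K_\alpha$ is in fact \emph{exactly} quadratic). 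The same obstruction is visible on the Fourier side: $(-\Delta_\xi)^N[(\ii\xi)^\beta m_\alpha]$ fails to be in $L^1$ near $\xi=0$ as soon as $2N\geq|\beta|+2$, because $m_\alpha$ agrees to leading order with the degree-zero homogeneous function $\xi_1^2/|\xi|^2$, whose iterated Laplacians are nonzero and homogeneous of degree $-2N$. Smoothness away from the origin cannot be extracted from global weighted bounds of all orders; one must first localize in \emph{physical} space with a cutoff $\psi$ supported away from, and vanishing at, the origin. This is what the paper's proof does: it writes $\mathcal{F}\bigl(\mathcal{F}^{-1}(\tilde m_\alpha)\psi\bigr)$ as $\tilde m_\alpha*\hat\psi$ with $\tilde m_\alpha=(1-\chi)m_\alpha$ supported away from the frequency origin, replaces $\hat\psi$ by $\pm\Delta^k\hat\Psi_k$ with $\Psi_k=|(x,y)|^{-2k}\psi$ still smooth and compactly supported, and transfers the Laplacians onto $\tilde m_\alpha$ to gain decay $|\xi|^{-\alpha-2k}$ for every $k$. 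The localization is what allows arbitrarily many Laplacians to be absorbed without demanding decay of $K_\alpha$ that it provably does not have.

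The second gap concerns the line $\{\xi_1=0\}$. You correctly observe that each homogeneous piece $m_k$, $k\geq 1$, is only finitely smooth across this line, so that $\mathcal{F}^{-1}(m_k)$ has only finitely many angular derivatives; concretely its singular support away from the origin lies on the $x$-axis. But your proposed remedy --- choosing $M=M(N,|\beta|)$ so large that ``all non-smoothness is pushed into the remainder'' --- does not achieve this: enlarging $M$ leaves the terms $m_1,\dots$ and their fixed, finite transversal regularity untouched, so the finitely-smooth contributions of $\mathcal{F}^{-1}(m_1),\dots,\mathcal{F}^{-1}(m_M)$ along the $x$-axis persist for every $M$ and are not absorbed by the remainder estimate. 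The only way your decomposition can close is through the cancellation between these singular parts that you mention as the alternative, and that cancellation is precisely the content of the lemma at the points $(x_0,0)$; it is flagged as ``delicate'' but not carried out. (To be fair, the paper's own argument is also terse here --- the assertion that $\Delta^k\tilde m_\alpha$ is smooth glosses over the fact that high $\xi_1$-derivatives of $|\xi_1|^{\alpha+2}$ blow up on the line $\xi_1=0$ --- but the localization framework it sets up is the right one, whereas the global moment bounds you start from are unattainable.)
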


\begin{proof}
Let $\chi: \R^2 \to \R$ be a compactly supported, radial, smooth function with $\chi(0,0)=1$. Set $\bar m_\alpha:=\chi m_\alpha$. Then $\bar m_\alpha$ has compact support and $\mathcal{F}^{-1}( \bar m_\alpha)$ is real analytic. Now, set $\tilde m_\alpha:=(1-\chi)m_\alpha$. Then $\tilde m_\alpha$ is smooth. Let us fix $(x_0,y_0) \neq (0,0)$ and let $\psi:\R^2\to \R$ be a compactly supported, smooth function with $\psi (x,y)=1$ in an arbitrarily small neighborhood of $(x_0,y_0)$ and $\psi (0,0)=0$. Then also
\[
\Psi_k(x,y)=|(x,y)|^{-2k}\psi
\]
is smooth and compactly supported. Notice that
$
\hat \Psi_k  = -\Delta^{-k} \hat \psi
$
and
\[
\tilde m_\alpha * \hat \psi = -\tilde m_\alpha * \Delta^k\hat \Psi_k = -(\Delta^k \tilde m_\alpha * \hat \Psi_k).
\]
Since $\Psi_k$ is smooth with compact support, we know that $\hat \Psi_k \in \mathcal{S}(\R^2)$. Furthermore $\Delta^k \tilde m_\alpha$ is smooth with $\Delta^k \tilde m_\alpha(\xi) \lesssim \tfrac{1}{|\xi|^{\alpha + 2k}}$ for $|\xi| \to \infty$. Since the convolution of two integrable, smooth functions is smooth and decays at least as fast as the function with the lower decay,  we deduce that $\tilde m_\alpha *\hat \psi$ is smooth and decays at least as $\frac{1}{|\cdot|^{\alpha+2k}}$ at infinity for an arbitrary choice of $k\in \N$. In particular, $\mathcal{F}^{-1}(\tilde m_\alpha *\hat \psi)= \mathcal{F}^{-1}(\tilde m_\alpha) \psi$ is smooth, which yields that $\mathcal{F}^{-1}(\tilde m_\alpha)$ is smooth outside the origin. We conclude that
\[
K_\alpha = \mathcal{F}^{-1}( \bar m_\alpha) + \mathcal{F}^{-1}(\tilde m_\alpha)
\]
is smooth outside the origin.
\end{proof}

Let us now investigate the behavior of $K_\alpha$ at infinity. We show that the decay is quadratic, independently of the value of $\alpha>0$.

\begin{proposition}\label{prop:K_decay} For any $\alpha>0$, we have that $r^2K_\alpha$ belongs to $L^\infty(\R^2)$. 
\end{proposition}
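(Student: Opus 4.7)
The goal is to prove the pointwise bound $|K_\alpha(x,y)|\lesssim (x^2+y^2)^{-1}$ for all $(x,y)\neq (0,0)$, from which $r^2 K_\alpha\in L^\infty(\R^2)$ follows immediately. The starting point is the Fourier inversion formula
\[
K_\alpha(x,y)=\frac{1}{(2\pi)^2}\int_{\R^2} m_\alpha(\xi)\,e^{\ii (x,y)\cdot\xi}\,\mathrm{d}\xi,
\]
combined with the elementary bound $|m_\alpha|\leq 1$ and the smoothness of $m_\alpha$ away from the origin (Lemma~\ref{lem:K_smoothness}). Setting $r=|(x,y)|$, I would split the integration at the dyadic scale $|\xi|=1/r$: on the low-frequency ball $\{|\xi|\leq 1/r\}$, the boundedness of $m_\alpha$ combined with the volume estimate $|\{|\xi|\leq 1/r\}|\lesssim r^{-2}$ directly produces the target bound.

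For the high-frequency region $\{|\xi|>1/r\}$ I would integrate by parts $N$ times using the first-order operator $L=(\ii r^2)^{-1}(x,y)\cdot \nabla_\xi$, which satisfies $L e^{\ii (x,y)\cdot \xi}=e^{\ii (x,y)\cdot \xi}$. Each application contributes a factor $r^{-1}$ and shifts one derivative onto the symbol, so that after $N$ steps the contribution is controlled by $r^{-N}\int_{|\xi|\geq 1/r}|\nabla^N m_\alpha(\xi)|\,\mathrm{d}\xi$ together with analogous boundary terms on $|\xi|=1/r$. Provided the Mikhlin-type estimates $|\nabla^N m_\alpha(\xi)|\lesssim |\xi|^{-N}$ hold on $\R^2\setminus\{0\}$ for some $N\geq 3$, the remaining integral scales as $r^{N-2}$ and, combined with the prefactor, yields again the desired $r^{-2}$ bound; the boundary terms scale identically.

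The main obstacle is the verification of these symbol estimates, because the factor $|\xi_1|^{\alpha+2}$ in the denominator is only $C^{\lfloor\alpha+2\rfloor}$ in $\xi_1$, so its higher derivatives become singular along the axis $\{\xi_1=0\}$ as soon as the order exceeds $\alpha+2$. To bypass this I would isolate the principal homogeneous singularity at the origin by writing
\[
m_\alpha(\xi)=\chi(\xi)\,\frac{\xi_1^2}{|\xi|^2}+R_\alpha(\xi),
\]
with $\chi$ a smooth, compactly supported, radial cutoff equal to one near the origin. The first summand is a cut-off zero-homogeneous multiplier of Calderón--Zygmund type, whose inverse Fourier transform is $C^\infty$ away from the origin and decays exactly like $1/|x|^2$ at infinity (proved by the integration-by-parts argument above, which applies here since $\xi_1^2/|\xi|^2$ satisfies Mikhlin's conditions of all orders away from $0$). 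The remainder $R_\alpha$ equals $-\xi_1^2|\xi_1|^{\alpha+2}/[|\xi|^2(|\xi|^2+|\xi_1|^{\alpha+2})]$ on the support of $\chi$ and the tail $(1-\chi)m_\alpha$ off it; in particular it vanishes at the origin (of order $|\xi_1|^{\alpha+4}/|\xi|^4$) and inherits decay at infinity from $m_\alpha$. On $R_\alpha$ the required Mikhlin bounds can be verified by a direct case analysis according to which of $|\xi|^2$ or $|\xi_1|^{\alpha+2}$ dominates the denominator, yielding the missing contribution to $r^2 K_\alpha$ in $L^\infty(\R^2)$, uniformly in $\alpha>0$.
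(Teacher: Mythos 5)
Your argument has a genuine gap at its core: the pointwise Mikhlin bounds $|\nabla^N m_\alpha(\xi)|\lesssim|\xi|^{-N}$ for $N\geq 3$, on which your high-frequency integration by parts relies, are false for this symbol --- and the failure is not caused by the limited smoothness of $|\xi_1|^{\alpha+2}$ along $\{\xi_1=0\}$, which is the only defect your decomposition $m_\alpha=\chi\,\xi_1^2/|\xi|^2+R_\alpha$ addresses. The real obstruction is the \emph{anisotropy} of $m_\alpha$ at high frequency, where $R_\alpha=m_\alpha$ because $\chi$ vanishes there. Setting $\tilde m(\eta):=\eta_1^2/(\eta_2^2+|\eta_1|^{\alpha+2})$, one has
\[
\lambda^{\alpha}\,m_\alpha\bigl(\lambda\eta_1,\lambda^{\frac{\alpha+2}{2}}\eta_2\bigr)\longrightarrow\tilde m(\eta)\qquad\mbox{as}\quad \lambda\to\infty
\]
in $C^3$ near $\eta=(1,1)$, whence $\bigl|\partial_{\xi_1}^3m_\alpha(\lambda,\lambda^{\frac{\alpha+2}{2}})\bigr|=\lambda^{-(\alpha+3)}\bigl(|\partial_{\eta_1}^3\tilde m(1,1)|+o(1)\bigr)$, while $|\xi|^{-3}\eqsim\lambda^{-\frac{3(\alpha+2)}{2}}$ at that point. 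Since $\alpha+3<\tfrac32(\alpha+2)$ for every $\alpha>0$, the order-three Mikhlin condition fails along this anisotropic ray whenever $\partial_{\eta_1}^3\tilde m(1,1)\neq0$; for the classical case $\alpha=2$ a direct computation gives $\partial_{\eta_1}^3\tilde m(1,1)=6$. Consequently the required estimate $r^{-N}\int_{|\xi|>1/r}|\nabla^Nm_\alpha|\,\mathrm{d}\xi\lesssim r^{-2}$ cannot be derived from your stated pointwise bounds; to salvage the scheme you would need integrated, region-by-region anisotropic estimates such as $\int_{|\xi|>1/r}|\nabla^3m_\alpha|\,\mathrm{d}\xi\lesssim r$ uniformly in $r>0$, which is a substantial piece of analysis that the sketch neither states nor carries out. (The low-frequency volume bound and the treatment of the cut-off Calder\'on--Zygmund part $\chi\,\xi_1^2/|\xi|^2$ are fine as far as they go.)

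The paper avoids exactly this difficulty by never integrating by parts isotropically in two variables. It first evaluates the $\xi_2$-integral in closed form via the Poisson-kernel identity $\mathcal{F}\bigl((a^2+(\cdot)^2)^{-1}\bigr)(y)=\tfrac{1}{a}e^{-a|y|}$ with $a=|\xi_1|(1+|\xi_1|^\alpha)^{\frac12}$, which reduces $K_\alpha$ to the one-dimensional oscillatory integral $\int_{\R}\frac{|\xi|}{(1+|\xi|^\alpha)^{1/2}}e^{G(\xi)}\,\mathrm{d}\xi$ with phase $G(\xi)=\ii x\xi-|y|\xi(1+\xi^\alpha)^{\frac12}$; it then integrates by parts only \emph{twice} in the single variable $\xi$, using $|G^\prime(\xi)|^2\gtrsim x^2+y^2$ and the explicit bounds of Lemma~\ref{lem:F}. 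Two derivatives in $\xi_1$ stay below the threshold at which $|\xi_1|^{\alpha+2}$ loses regularity, and the transverse anisotropy is absorbed into the exponentially decaying factor $e^{-|y|\xi(1+\xi^\alpha)^{1/2}}$ rather than fought against with symbol estimates. If you wish to keep a genuinely two-dimensional argument, the split at $|\xi|=1/r$ and the operator $L$ would have to be replaced by anisotropic counterparts adapted to the scaling $(\xi_1,\xi_2)\mapsto(\lambda\xi_1,\lambda^{\frac{\alpha+2}{2}}\xi_2)$, at which point the proof is no longer the routine non-stationary-phase estimate you describe.
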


\begin{proof}
	We have $K_\alpha=\mathcal{F}^{-1}(m_\alpha)$, so that
	\begin{align*}
	K_\alpha(x,y)&=\int_{\R^2}\frac{\xi_1^2}{\xi_1^2 + \xi_2^2 +| \xi_1|^{\alpha +2}}e^{\mathrm{i}x\xi_1 + \mathrm{i}y\xi_2}\,\mathrm{d}\xi_1\, \mathrm{d}\xi_2=\int_\R \frac{|\xi|}{(1+|\xi|^\alpha)^{\frac{1}{2}}}e^{-|y||\xi|(1+|\xi|^\alpha)^{\frac{1}{2}}}e^{\mathrm{i}x\xi}\, \mathrm{d}\xi,
	\end{align*}
 where we used that
 \[
\mathcal{F}\left(\frac{1}{a^2+(\cdot)^2}\right)(y)=\int_{\R} \frac{1}{a^2+\xi_2^2}e^{-\mathrm{i}\xi_2 y}\, \mathrm{d}\xi_2 = \frac{1}{a}e^{-a|y|}
 \]
 and $a^2=\xi_1^2+|\xi_1|^{\alpha+2}$.
Let us consider the case where $\xi\geq 0$ (the proof works similarly for $\xi<0$).  Assume for the moment that $y\neq 0$. Setting
\begin{align*}
	K_\alpha^+(x,y):=\int_0^\infty \frac{\xi}{(1+\xi^\alpha)^{\frac{1}{2}}}e^{-|y|\xi(1+\xi^\alpha)^{\frac{1}{2}}}e^{\mathrm{i}x\xi}\, \mathrm{d}\xi,
\end{align*}
we can write
\begin{align*}
	K_\alpha^+(x,y)=\int_0^\infty \frac{\xi}{(1+\xi^\alpha)^{\frac{1}{2}}}\frac{1}{G^\prime(\xi)}\frac{d}{d\xi}\left(e^{G(\xi)}\right)\, \mathrm{d}\xi,
\end{align*}
where $G(\xi):=\mathrm{i}x\xi - |y|\xi (1+\xi^\alpha)^\frac{1}{2}$. Using integration by parts, we obtain 
\[
K_\alpha^+(x,y)= -\int_0^\infty \frac{\mathrm{d}}{\mathrm{d}\xi} \left( \frac{\xi}{(1+\xi^\alpha)^{\frac{1}{2}}}\frac{1}{G^\prime(\xi)} \right)e^{G(\xi)}\,\mathrm{d}\xi.
\]
%and
%\begin{align}\label{eq:first_D}
%	\frac{d}{d\xi} \left( \frac{\xi}{(1+\xi^\alpha)^{\frac{1}{2}}}\frac{1}{G^\prime(\xi)} \right) &= \frac{1}{(1+\xi^\alpha)^\frac{1}{2}G^\prime(\xi)} - \frac{\xi \left(\frac{\alpha}{2}(1+\xi^\alpha)^{-\frac{1}{2}}\xi^{\alpha-1}G^\prime(\xi) +(1+\xi^\alpha)^{\frac{1}{2}}G^{\prime\prime} \right)}{(1+\xi^\alpha)\left[G^\prime(\xi)\right]^2}\\
%	&=\frac{1}{(1+\xi^\alpha)^\frac{1}{2}G^\prime(\xi)} - \frac{\xi \left(\frac{\alpha}{2}(1+\xi^\alpha)^{-1}\xi^{\alpha-1}G^\prime(\xi) +(1+\xi^\alpha)^{\frac{1}{2}}G^{\prime\prime}(\xi) \right)}{(1+\xi^\alpha)^\frac{1}{2}\left[G^\prime(\xi)\right]^2}.
%\end{align}
Applying again integration by parts, we find 
\begin{align*}
K_\alpha^+(x,y)
%&= -\int_0^\infty \frac{d}{d\xi} \left( \frac{\xi}{(1+\xi^\alpha)^{\frac{1}{2}}}\frac{1}{G^\prime(\xi)} \right)\frac{1}{G^\prime(\xi)}\frac{d}{d\xi}\left(e^{G(\xi)}\right)\,d\xi\\
&=-\left[\frac{\mathrm{d}}{\mathrm{d}\xi} \left( \frac{\xi}{(1+\xi^\alpha)^{\frac{1}{2}}}\frac{1}{G^\prime(\xi)} \right)\frac{1}{G^\prime(\xi)} e^{G(\xi)} \right]_0^\infty+\int_0^\infty \frac{\mathrm{d}}{\mathrm{d}\xi}\left(\frac{\mathrm{d}}{\mathrm{d}\xi} \left( \frac{\xi}{(1+\xi^\alpha)^{\frac{1}{2}}}\frac{1}{G^\prime(\xi)} \right)\frac{1}{G^\prime(\xi)}\right)e^{G(\xi)}\,\mathrm{d}\xi\\
&=\frac{\mathrm{d}}{\mathrm{d}\xi} \left( \frac{\xi}{(1+\xi^\alpha)^{\frac{1}{2}}}\frac{1}{G^\prime(\xi)} \right)\frac{1}{G^\prime(\xi)}  \Big|_{\xi=0}+\int_0^\infty \frac{\mathrm{d}}{\mathrm{d}\xi}\left(\frac{\mathrm{d}}{\mathrm{d}\xi} \left( \frac{\xi}{(1+\xi^\alpha)^{\frac{1}{2}}}\frac{1}{G^\prime(\xi)} \right)\frac{1}{G^\prime(\xi)}\right)e^{G(\xi)}\,\mathrm{d}\xi
\end{align*}

In order to lighten the notation, we set
\[
F(\xi):=\frac{\mathrm{d}}{\mathrm{d}\xi} \left( \frac{\xi}{(1+\xi^\alpha)^{\frac{1}{2}}}\frac{1}{G^\prime(\xi)} \right)\frac{1}{G^\prime(\xi)} ,
\]
so that 
\[
K_\alpha^+(x,y)= F(0)+\int_0^\infty F^\prime(\xi)e^{G(\xi)}\,\mathrm{d}\xi.
\]
Using Lemma \ref{lem:F}, we find
\[
|K_\alpha^+(x,y)|\leq \frac{1}{x^2+y^2}.
\]
We are left to consider the case when $y=0$, that is
\[
K_\alpha(x,0)=\int_\R \frac{|\xi|}{(1+|\xi|^\alpha)^\frac{1}{2}}e^{\mathrm{i}x\xi}\,\mathrm{d}\xi.
\]
Notice that $x^2K_\alpha(x,0)=\textcolor{blue}{-}\mathcal{F}^{-1}\left(\frac{\mathrm{d}^2}{\mathrm{d}\xi^2}\frac{|\xi|}{(1+|\xi|^\alpha)^\frac{1}{2}}\right)$ and
\[
\frac{\mathrm{d}^2}{\mathrm{d}\xi^2}\frac{|\xi|}{(1+|\xi|^\alpha)^\frac{1}{2}}=2\delta_0(\xi)+g(\xi),
\]
where $\delta_0$ denotes the delta distribution centered at zero and $g\in L^1(\R)$. Thus $x\mapsto x^2K_\alpha(x,0)$ belongs to $L^\infty(\R)$. This concludes the proof.
\end{proof}

In order to determine the $L^p$-regularity of $K_\alpha$, it is left to investigate the behaviour of the kernel function close to the origin. To do so, we will use that $|\nabla m_\alpha|\lesssim h_\alpha$, where
\begin{equation}\label{eq:h}
h_\alpha(\xi_1,\xi_2):=\frac{\xi_1}{|\xi|^2+\xi_1^{\alpha+2}}.
\end{equation}
Notice also that $\widehat{\partial_x^{-1} K_\alpha}(\xi_1,\xi_2) =-\ii h(\xi_1,\xi_2)$ and \eqref{eq:convolution} can be written as
\begin{equation}\label{eq:H_form}
	\phi=-\frac{\ii}{2}H_\alpha*(\phi^2)_x,\qquad \hat H_\alpha(\xi_1,\xi_2)= h_\alpha(\xi_1,\xi_2)
\end{equation}

\begin{lemma}[The symbol $h_\alpha$]\label{lem:h}
	We have that
	\begin{itemize}
		\item[a)] $h_\alpha\in L^p(\R^2)$ if and only if $\frac{1}{2}+\frac{3}{2(1+\alpha)}<p<2$ and
   \[
H_\alpha\in L^{p^\prime}(\R^2)\qquad \mbox{for} \quad 2<p^\prime < \frac{4+\alpha}{2-\alpha}.
\]
		\item[b)]  $rH_\alpha \in L^\infty(\R^2)$.
		\end{itemize}
\end{lemma}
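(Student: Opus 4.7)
The plan for part (a) is to follow the same change of variables used in the proof of Lemma \ref{lem:m} for $m_\alpha$. I would write
\[
\|h_\alpha\|_{L^p}^p = \int_{\R^2} \frac{|\xi_1|^p}{\bigl(\xi_1^2(1+|\xi_1|^\alpha) + \xi_2^2\bigr)^p}\,\mathrm{d}(\xi_1,\xi_2),
\]
and substitute $z = \xi_2/\bigl(|\xi_1|(1+|\xi_1|^\alpha)^{1/2}\bigr)$ to factorize this as a finite $z$-integral $\int(1+z^2)^{-p}\,\mathrm{d}z$ times the one-dimensional integral $\int_\R |\xi_1|^{1-p}(1+|\xi_1|^\alpha)^{-(p-1/2)}\,\mathrm{d}\xi_1$. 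The singularity at $\xi_1 = 0$ is integrable iff $p < 2$, and the decay at infinity requires $1 - p - \alpha(p - \tfrac{1}{2}) < -1$, equivalently $p > (4+\alpha)/(2(1+\alpha)) = \tfrac{1}{2} + \tfrac{3}{2(1+\alpha)}$. Once this is established, the $L^{p'}$-integrability statement for $H_\alpha$ follows directly from the Hausdorff--Young inequality $\|H_\alpha\|_{L^{p'}} \lesssim \|h_\alpha\|_{L^p}$, applied in the range $1 \leq p \leq 2$, since the dual exponents of $\bigl(\tfrac{4+\alpha}{2(1+\alpha)},2\bigr)$ are precisely $\bigl(2, \tfrac{4+\alpha}{2-\alpha}\bigr)$.

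For part (b), my strategy is to integrate out $\xi_2$ in the definition of $H_\alpha$ explicitly via the Poisson-kernel identity $\int_\R (a^2+\xi_2^2)^{-1} e^{\mathrm{i}y\xi_2}\,\mathrm{d}\xi_2 = \pi a^{-1} e^{-a|y|}$ with $a = |\xi_1|(1+|\xi_1|^\alpha)^{1/2}$. Using the oddness of $\mathrm{sgn}(\xi_1)$, this yields
\[
H_\alpha(x,y) = \frac{\mathrm{i}}{2\pi}\int_0^\infty \frac{\sin(x\xi)}{(1+\xi^\alpha)^{1/2}}\,e^{-\xi(1+\xi^\alpha)^{1/2}|y|}\,\mathrm{d}\xi.
\]
Since $r \leq |x| + |y|$, it is enough to bound $|xH_\alpha|$ and $|yH_\alpha|$ separately in $L^\infty$. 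The second bound is immediate: using $(1+\xi^\alpha)^{-1/2} \leq 1$ and $|\sin(x\xi)| \leq 1$, the substitution $u = \xi(1+\xi^\alpha)^{1/2}|y|$ yields $|yH_\alpha| \leq \tfrac{1}{2\pi}\int_0^\infty e^{-u}\,\mathrm{d}u = \tfrac{1}{2\pi}$. For the first bound I would integrate by parts in $\xi$, converting $\sin(x\xi)$ into a boundary factor $f(0)/x = 1/x$ (where $f(\xi) = (1+\xi^\alpha)^{-1/2} e^{-\xi(1+\xi^\alpha)^{1/2}|y|}$) plus the remainder $x^{-1}\int_0^\infty f'(\xi)\cos(x\xi)\,\mathrm{d}\xi$, so that $|xH_\alpha| \lesssim 1 + \|f'\|_{L^1(0,\infty)}$.

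The main obstacle is establishing an $L^1$-bound on $f'$ that is uniform in $y$. Differentiating the prefactor produces $\tfrac{\alpha\xi^{\alpha-1}}{2(1+\xi^\alpha)^{3/2}}$, which integrates exactly to $1$ by an elementary antiderivative computation. Differentiating the exponential produces a potentially dangerous factor of $|y|$, but after dominating $e^{-\xi(1+\xi^\alpha)^{1/2}|y|}$ by $e^{-\xi|y|}$ (using $(1+\xi^\alpha)^{1/2} \geq 1$) and $\tfrac{\alpha\xi^\alpha}{2(1+\xi^\alpha)} \leq \tfrac{\alpha}{2}$, the rescaling $u = \xi|y|$ gives $\int_0^\infty |y|(1+\alpha/2)e^{-\xi|y|}\,\mathrm{d}\xi = 1 + \alpha/2$, independent of $y$. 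This yields a uniform bound on $\|f'\|_{L^1}$, hence $|xH_\alpha| \lesssim 1$, and the case $y = 0$ is covered by the same argument since the $y$-dependent terms simply vanish. The overall structure parallels the proof of Proposition \ref{prop:K_decay} for $K_\alpha$; the essential difference is that only a single integration by parts is performed, producing decay of order $r^{-1}$ rather than $r^{-2}$.
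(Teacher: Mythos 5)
Your part (a) is correct and essentially identical to the paper's: the same factorization via the substitution $z=\xi_2/\bigl(|\xi_1|(1+|\xi_1|^\alpha)^{1/2}\bigr)$, the same integrability analysis at the origin and at infinity, and Hausdorff--Young for the conclusion about $H_\alpha$. Your part (b) is also correct, but it takes a genuinely different route. The paper keeps the full complex phase $G(\xi)=\mathrm{i}x\xi-|y|\xi(1+\xi^\alpha)^{1/2}$, integrates by parts once against $e^{G(\xi)}$ using $E(\xi)=(1+\xi^\alpha)^{-1/2}/G'(\xi)$, and obtains the bound $|H_\alpha^+|\lesssim (x^2+y^2)^{-1/2}$ in one stroke from the lower bound $|G'(\xi)|^2\gtrsim x^2+y^2$ (this is the content of the appendix Lemma on $E$). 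You instead split the task into bounding $|yH_\alpha|$ and $|xH_\alpha|$ separately, which suffices since $r\eqsim |x|+|y|$: the $y$-decay falls out of the exponential damping after dominating $e^{-\xi(1+\xi^\alpha)^{1/2}|y|}$ by $e^{-\xi|y|}$, and the $x$-decay comes from a single real integration by parts against $\sin(x\xi)$ with an explicit, $y$-uniform $L^1$-bound on $f'$. Your version is more elementary --- it avoids the auxiliary lemma entirely, the two pieces of $\|f'\|_{L^1}$ are computed by hand, and the degenerate case $y=0$ is absorbed into the same computation rather than requiring the separate delta-distribution argument the paper uses; the only care needed there is to perform the integration by parts on $[0,R]$ and let $R\to\infty$, which is justified since $f(R)\to 0$ and $f'\in L^1(0,\infty)$ even when $y=0$. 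The paper's complex-phase method is slightly more systematic in that it is the same machinery reused (with two integrations by parts) to get the quadratic decay of $K_\alpha$ in Proposition \ref{prop:K_decay}, whereas your argument is tailored to the first-order decay needed here.
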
	

\begin{proof}
Similar as in the proof of Lemma \ref{lem:m} we compute
\begin{align*}
\|h_\alpha\|_{L^p(\R^2)}^p &= \int_{\R^2} \frac{1}{\left(|\xi_1|+|\xi_1|^{\alpha+1}+\frac{\xi_2^2}{|\xi_1|}\right)^p}\,\mathrm{d}(\xi_1,\xi_2)\\
&=\int_{\R} \frac{1}{|\xi_1|^p\left(1+|\xi_1|^{\alpha}\right)^p} \int_{\R} \frac{1}{ \left(1+\frac{\xi_2^2}{\xi_1^2(1+|\xi_1|^{\alpha})}\right)^p}\, \mathrm{d}\xi_2 \,\mathrm{d}\xi_1\\
&=\int_{\R} \frac{1}{|\xi_1|^{p-1}\left(1+|\xi_1|^{\alpha}\right)^{p-\frac{1}{2}}} \,\mathrm{d}\xi_1 \int_{\R} \frac{1}{ \left(1+z^2\right)^p}\, \mathrm{d}z,
\end{align*}
where we used the change of variables $z=\frac{\xi_2}{|\xi_1| (1+|\xi_1|^\alpha)^\frac{1}{2}}$. Since the last integral above is bounded for all $p>1$, we find that $h_\alpha\in L^p(\R^2)$ if and only if
\[
\frac{1}{2}+\frac{3}{2(1+\alpha)}<p<2.
\]
Since the Fourier transform is a bounded function from $L^p(\R^2)$ to $L^{p^\prime}(\R^2)$ for $p\in [1,2]$ and $p^\prime$ being the dual conjugate to $p$, we obtain immediately that
\[
H_\alpha\in L^{p^\prime}(\R^2)\qquad \mbox{for} \quad 2<p^\prime < \frac{4+\alpha}{2-\alpha}.
\]
Thereby, part (a) is proved. In order to prove part (b) we proceed as in the proof of Proposition \ref{prop:K_decay}.
We have that
\begin{align*}
H_\alpha(x,y)&=\int_{\R^2}\frac{\xi_1}{\xi_1^2+\xi_2^2+|\xi|^{\alpha+2}}e^{\mathrm{i}x\xi_1+\mathrm{i}y\xi_2}\, \mathrm{d}(\xi_1,\xi_2)\\
&= \int_{\R} \xi_1 e^{\mathrm{i}x\xi_1}\int_{\R} \frac{1}{\xi_1^2+\xi_2^2+|\xi_1|^{\alpha+2}}e^{\mathrm{i}\xi_2y}\,\mathrm{d}\xi_2\, \mathrm{d}\xi_1\\
&= \int_{\R} \frac{\xi}{|\xi|(1+|\xi|^\alpha)^\frac{1}{2} }e^{\mathrm{i}x\xi-|\xi|(1+|\xi|^\alpha)^\frac{1}{2}|y|}\,\mathrm{d}\xi\\
&=\int_{\R} \mbox{sgn}(\xi)\frac{1}{(1+|\xi|^\alpha)^\frac{1}{2} }e^{\mathrm{i}x\xi-|\xi|(1+|\xi|^\alpha)^\frac{1}{2}|y|}\,\mathrm{d}\xi .
\end{align*}
Let us consider the positive part of the integral, the negative part can be estimated analogously. Assume for the moment that $y\neq 0$ and set
\[
H^+_\alpha(x,y):=\int_0^\infty \frac{1}{(1+|\xi|^\alpha)^\frac{1}{2} }e^{\mathrm{i}x\xi-|\xi|(1+|\xi|^\alpha)^\frac{1}{2}|y|}\,\mathrm{d}\xi.
\]
With $E(\xi):= \frac{1}{(1+\xi^\alpha)^\frac{1}{2}}\frac{1}{G^\prime(\xi)}$, we obtain after integration by parts
\[
H^+_\alpha(x,y)= -E(0)- \int_0^\infty E^\prime(\xi)e^{G(\xi)}\, \mathrm{d}\xi,
\]
where $G(\xi)=\mathrm{i}x\xi-|\xi|(1+\xi^\alpha)^\frac{1}{2}|y|$. In view of Lemma \ref{lem:E} we find that
\begin{equation}\label{eq:H_plus}
|H^+_\alpha(x,y)|\lesssim \frac{1}{\sqrt{x^2+y^2}}.
\end{equation}
If $y=0$, we have 
\[
H_\alpha(x,0)=\int_\R \frac{\xi}{|\xi|(1+|\xi|^\alpha)^\frac{1}{2}}e^{\mathrm{i}x\xi}\,\mathrm{d}\xi.
\]
Notice that $\mathrm{i}xH_\alpha(x,y)=-\mathcal{F}^{-1}\left(\frac{\mathrm{d}}{\mathrm{d}\xi}\frac{\xi}{|\xi|(1+|\xi|^\alpha)^\frac{1}{2}}\right) $ and
\[
\frac{\mathrm{d}}{\mathrm{d}\xi}\frac{\xi}{|\xi|(1+|\xi|^\alpha)^\frac{1}{2}} =\frac{1}{(1+|\xi|^\alpha)^\frac{1}{2}} \delta_0(\xi) + g(\xi),
\]
where $\delta_0$ denotes the delta distribution centered at zero and $g\in L^1(\R)$. We deduce that $x\mapsto |x|H_\alpha(x,0)$ is a bounded function. Together with \eqref{eq:H_plus} this proves the claim that $rH_\alpha\in L^\infty(\R^2)$.
\end{proof}

\begin{proposition}\label{prop:K_reg}
The kernel function $K_\alpha$ satisfies the regularity
    \[
K_\alpha \in L^{r}(\R^2)\qquad \mbox{for}\quad 1<r< \frac{8+2\alpha}{8-\alpha}.
\]
\end{proposition}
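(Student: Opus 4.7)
My plan is to split $K_\alpha=\chi K_\alpha+(1-\chi)K_\alpha$ via a smooth radial cutoff $\chi$ with $\chi\equiv 1$ on $B_1$ and $\chi\equiv 0$ outside $B_2$, and to treat the two pieces by different techniques.

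For the far piece $(1-\chi)K_\alpha$, Proposition~\ref{prop:K_decay} supplies the pointwise bound $|K_\alpha(x,y)|\lesssim r^{-2}$ on the support of $1-\chi$, so
\[
\|(1-\chi)K_\alpha\|_{L^r(\R^2)}^r\lesssim \int_{|(x,y)|\geq 1}|(x,y)|^{-2r}\,\mathrm{d}(x,y)\lesssim \int_1^\infty \rho^{1-2r}\,\mathrm{d}\rho<\infty
\]
for every $r>1$. The far piece thus places no constraint beyond $r>1$.

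The work is then concentrated in the compactly supported near piece $\chi K_\alpha$, which inherits the singularity of $K_\alpha$ at the origin. I would estimate it via a slice-wise Hausdorff--Young argument in the $x$-variable. From the Fourier representation derived in the proof of Proposition~\ref{prop:K_decay}, for each $y\neq 0$ one has $K_\alpha(\cdot,y)=\mathcal{F}^{-1}_x(I_y)$ with
\[
I_y(\xi):=|\xi|(1+|\xi|^\alpha)^{-1/2}e^{-|y||\xi|(1+|\xi|^\alpha)^{1/2}}.
\]
By 1D Hausdorff--Young, $\|K_\alpha(\cdot,y)\|_{L^{p'}_x}\leq \|I_y\|_{L^p_\xi}$ for every $1\leq p\leq 2$. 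An anisotropic change of variables $\xi=|y|^{-2/(2+\alpha)}\eta$ normalizes the exponential weight and yields the asymptotic
\[
\|I_y\|_{L^p_\xi}\lesssim |y|^{-(2/p+2-\alpha)/(2+\alpha)}\qquad (0<|y|\leq 1).
\]
Since $\chi K_\alpha(\cdot,y)$ has $x$-support of length $O(1)$, Hölder's inequality lets me trade $L^{p'}_x$ for $L^r_x$ whenever $r\leq p'$, and integrating in $y$ gives
\[
\|\chi K_\alpha\|_{L^r(\R^2)}^r\lesssim \int_0^1 y^{-r(2/p+2-\alpha)/(2+\alpha)}\,\mathrm{d}y,
\]
finite precisely when $r<(2+\alpha)/(2/p+2-\alpha)$.

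The main obstacle lies in the final optimization over $p\in[1,2]$, balancing the Hölder constraint $r\leq p'$ and the integrability condition $r<(2+\alpha)/(2/p+2-\alpha)$ against the Hausdorff--Young requirement $p\leq 2$. In the energy-subcritical regime $4/5<\alpha\leq 4/3$ the constraint $p\leq 2$ is the binding one, and one cannot match $r=p'$ freely as in the $L^2$-subcritical range; choosing $p$ at this endpoint and then interpolating the resulting $L^{p'}$ estimate with the weak-$L^1$ bound implied by the quadratic decay $|K_\alpha|\lesssim r^{-2}$ produces the asserted range $1<r<\tfrac{8+2\alpha}{8-\alpha}$.
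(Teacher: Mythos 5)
Your decomposition into a near piece $\chi K_\alpha$ and a far piece $(1-\chi)K_\alpha$, and your treatment of the far piece via the quadratic decay of Proposition \ref{prop:K_decay}, coincide with the paper's argument. For the near piece, however, you take a genuinely different route (slice-wise one-dimensional Hausdorff--Young applied to $I_y$ with an anisotropic rescaling, rather than the paper's two-dimensional Hausdorff--Young applied to $\nabla m_\alpha\lesssim h_\alpha$, which gives $rK_\alpha\in L^{s}$ for $s<\tfrac{4+\alpha}{2-\alpha}$ followed by H\"older against $r^{-1}\in L^t_{\mathrm{loc}}$ for $t<2$), and this is where the proof breaks down.

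Your slice-wise computation is correct as far as it goes: with $p=2$ it yields
\[
\|\chi K_\alpha\|_{L^r(\R^2)}^r\lesssim \int_0^1 y^{-r\frac{3-\alpha}{2+\alpha}}\,\mathrm{d}y,
\]
which is finite precisely for $r<\tfrac{2+\alpha}{3-\alpha}$ (and taking $p<2$ only worsens the $y$-exponent while relaxing the irrelevant constraint $r\le p'$, so for $\alpha\le\tfrac43$ the exponent $\tfrac{2+\alpha}{3-\alpha}$ is the best your method produces). Comparing $\tfrac{2+\alpha}{3-\alpha}$ with the claimed $\tfrac{8+2\alpha}{8-\alpha}$, the difference of cross-products is $\alpha^2+8\alpha-8$, which is negative for $\tfrac45<\alpha<2\sqrt{6}-4\approx 0.899$. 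So in that range your estimate gives a \emph{strictly smaller} interval of exponents than the proposition asserts, and the final sentence of your proposal cannot close the gap: the quadratic decay $|K_\alpha|\lesssim r^{-2}$ only places $K_\alpha$ in $L^{1,\infty}(\R^2)$, and interpolating an $L^{r_0}$ bound against the \emph{weaker} endpoint $L^{1,\infty}$ produces $L^r$ only for $1<r<r_0$; it can never raise the integrability exponent above the one you already have. Since you have no strong- or weak-type bound at any exponent $\ge\tfrac{8+2\alpha}{8-\alpha}$ when $\alpha$ is close to $\tfrac45$, the claimed range is not established there. (For $\alpha>2\sqrt{6}-4$ your slice-wise bound actually exceeds the stated range, but that does not rescue the proposition as stated, which is claimed for all $\alpha>\tfrac45$.) To repair the argument you would either need to sharpen the slice-wise estimate near $y=0$ --- for instance by exploiting the extra decay in $x$ on each slice rather than only the $L^{p'}_x$ norm --- or switch to the paper's two-dimensional argument via $\nabla m_\alpha$.
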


\begin{proof}
We know already from Lemma \ref{lem:K_smoothness} and Proposition \ref{prop:K_decay} that $K_\alpha$ is smooth outside the origin and $r^2K_\alpha\in L^\infty(\R^2)$. Introducing a smooth truncation function $\rho:\R_+\to \R_+$, which is compactly supported in a neighborhood of zero, denoted by $B\subset \R^2$, with $\rho(0)=1$, we find that
\begin{equation}\label{eq:K_inf}
(1-\rho(r))K_\alpha \in L^s(\R^2)\qquad \mbox{for all}\quad s>1, 
\end{equation}
where $r(x,y)=|(x,y)|$.
In order to determine the regularity of $K_\alpha$ close to zero, recall that $|\nabla m_\alpha|\lesssim |h_\alpha|$, where $h_\alpha$ defined in \eqref{eq:h} so that
\begin{equation*}
|\nabla m_\alpha |\in L^q(\R^2)\qquad \mbox{for}\qquad \frac{1}{2}+\frac{3}{2(1+\alpha)}<q<2,
\end{equation*}
due to Lemma \ref{lem:h} (a). Now, we use that the Fourier transformation is a bounded operator from $L^p(\R^2)$ to $L^{p^\prime}(\R^2)$ when $p\in [1,2]$ and $p^\prime$ is the dual conjugate of $p$ and obtain that
\[
\|rK_\alpha\|_{L^{q^\prime}(\R^2)}\lesssim \|xK\|_{L^{q^\prime}(\R^2)}+\|yK\|_{L^{q^\prime}(\R^2)} \leq  \|\partial_{\xi_1}m_\alpha\|_{L^{q}(\R^2)}+\|\partial_{\xi_2}m_\alpha\|_{L^{q}(\R^2)}\lesssim  2\||\nabla m_\alpha|\|_{L^{q}(\R^2)},
\]
so that
\[
rK_\alpha \in L^{q^\prime}(\R^2)\qquad \mbox{for}\qquad 2<q^\prime < \frac{4+\alpha}{2-\alpha},
\]
and in fact $rK_\alpha \in L^{s}(B)$ for $1\leq s<\frac{4+\alpha}{2-\alpha}$, by Hölder's inequality and the boundedness of $B$. Then, we estimate
\[
\|\rho(r)K_\alpha\|_{L^r(\R^2)}\lesssim \|r^{-1}\| _{L^t(B)} \|rK_\alpha\|_{L^{s}(B)},
\]
where $\frac{1}{r}=\frac{1}{t}+\frac{1}{s}$. Since $r^{-1}\in L^t(B)$ if and only if $1\leq t<2$, we conclude that
\[
\rho(r)K_\alpha \in L^r(\R^2)\qquad \mbox{for} \quad 1\leq r<\frac{8+2\alpha}{8-\alpha}.
\]
In view of \eqref{eq:K_inf} we deduce that
\[
K_\alpha \in L^{r}(\R^2)\qquad \mbox{for}\quad 1<r< \frac{8+2\alpha}{8-\alpha}.
\]
\end{proof}

\subsubsection*{A non-optimal decay rate.}
First notice that $\phi$ inherits the integrability properties of $K_\alpha$, since
\[
\|\phi\|_{L^r(\R^2)}= \|K_\alpha*\phi^2\|_{L^r(\R^2)} \lesssim \|K_\alpha\|_{L^r(\R^2)}\|\phi^2\|_{L^1(\R^2)}\lesssim \|K_\alpha\|_{L^r(\R^2)},
\]
by the $L^2$-integrability of $\phi$. Interpolating between the boundedness of $\phi$, which is due to Lemma \ref{lem:smooth}, and the $L^r$-integrability of $\phi$ for $1<r<\frac{8+2\alpha}{8-\alpha}$, we actually find that
\begin{equation}\label{eq:phi_int}
\phi \in L^p(\R^2)\qquad \mbox{for all}\quad p>1.
\end{equation}

\begin{proposition}[A priori decay estimate]\label{prop:non_op}
	If $\phi$ is a solution of \eqref{eq:eq} in the energy space $X_{\frac{\alpha}{2}}$, then
	\[
	r\phi \in L^\infty(\R^2).
	\]
	
\end{proposition}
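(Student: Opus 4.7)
The plan is to start from the convolution identity \eqref{eq:H_form},
\[
\phi(x,y) = -\frac{\ii}{2}\int_{\R^2} H_\alpha(x-x',y-y')(\phi^2)_x(x',y')\,\mathrm{d}(x',y'),
\]
multiply through by $r(x,y)$, and split via the triangle inequality $r(x,y)\leq r(x-x',y-y')+r(x',y')$ (which is \eqref{eq:convex} with $a=1$). This yields the pointwise bound
\[
r(x,y)|\phi(x,y)|\;\lesssim\;\bigl(|rH_\alpha|*|(\phi^2)_x|\bigr)(x,y)\;+\;\bigl(|H_\alpha|*r|(\phi^2)_x|\bigr)(x,y),
\]
so the proof reduces to showing that both convolutions are uniformly bounded in $(x,y)$.

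For the first term I would invoke Young's inequality to estimate it by $\|rH_\alpha\|_{L^\infty(\R^2)}\|(\phi^2)_x\|_{L^1(\R^2)}$. The first factor is finite by Lemma \ref{lem:h}(b). For the second, writing $(\phi^2)_x = 2\phi\phi_x$ and applying Cauchy--Schwarz yields $\|(\phi^2)_x\|_{L^1}\leq 2\|\phi\|_{L^2}\|\phi_x\|_{L^2}$; the first norm is finite since $\phi\in X_\frac{\alpha}{2}$, and the second because $\phi\in H^\infty(\R^2)$ by Lemma \ref{lem:smooth}.

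For the second term I would fix an exponent $p'\in(2,\infty)$ admissible in Lemma \ref{lem:h}(a) and its dual conjugate $p=p'/(p'-1)\in(1,2)$. Young's inequality then gives
\[
\bigl\||H_\alpha|*r|(\phi^2)_x|\bigr\|_{L^\infty}\leq\|H_\alpha\|_{L^{p'}(\R^2)}\,\|r(\phi^2)_x\|_{L^p(\R^2)},
\]
with $\|H_\alpha\|_{L^{p'}}$ finite by Lemma \ref{lem:h}(a). Applying Hölder's inequality with exponents $2$ and $2p/(2-p)$ to $r(\phi^2)_x=2(r\phi_x)\,\phi$, I control $\|r(\phi^2)_x\|_{L^p}$ by $2\|r\phi_x\|_{L^2}\|\phi\|_{L^{2p/(2-p)}}$. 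The weighted $L^2$ bound on $\phi_x$ is provided by Lemma \ref{lem:est}, and since $2p/(2-p)>1$ the remaining factor is finite by \eqref{eq:phi_int}.

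The main obstacle is a bookkeeping one: because $H_\alpha\notin L^\infty$ in general, one cannot close the estimate on the second convolution by a direct $L^\infty\cdot L^1$ pairing, and one is forced into Young's inequality with $p'<\infty$, i.e.\ $p>1$. This is precisely why the weighted estimate from Lemma \ref{lem:est} is needed, together with the strong integrability of $\phi$ given by \eqref{eq:phi_int}, to absorb the factor of $r$ through the splitting. Once the exponents are matched, the two bounds combine to give $r\phi\in L^\infty(\R^2)$.
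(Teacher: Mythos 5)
Your proposal is correct and follows essentially the same route as the paper: the same convolution identity \eqref{eq:H_form}, the same splitting of the weight $r$ via \eqref{eq:convex}, the pairing $\|rH_\alpha\|_{L^\infty}\|\phi\|_{L^2}\|\phi_x\|_{L^2}$ for the first term, and Young plus H\"older with an exponent $p'$ admissible in Lemma \ref{lem:h}(a), combined with Lemma \ref{lem:est} and the integrability \eqref{eq:phi_int}, for the second. The exponent bookkeeping you carry out (your $2p/(2-p)$ is the paper's $s$ with $\tfrac{1}{s}+\tfrac{1}{2}=\tfrac{1}{q}$) matches the paper's argument exactly.
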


\begin{proof} Recall from \eqref{eq:H_form} that
\[
\phi =-\frac{\mathrm{i}}{2}H_\alpha *(\phi^2)_x
\]
so that by Young's inequality
\begin{align*}
\|r\phi\|_\infty &\leq \|rH*\phi \phi_x\|_\infty + \|H*r\phi\phi_x\|_\infty\lesssim \|rH\|_\infty\|\phi\|_{L^2(\R)}\|\phi_x\|_{L^2(\R)} + \|H\|_{L^{q^\prime}(\R^2)}\|r\phi_x\|_{L^2(\R^2)} \|\phi\|_{L^s(\R^2)},
\end{align*}
where $\frac{1}{s}+\frac{1}{2}=\frac{1}{q}$ for $\frac{1}{2}+\frac{3}{2(1+\alpha)}<q<2$ and $q^\prime$ being the dual of $q$. 
Now, the statement follows from Lemma \ref{lem:h} and Lemma \ref{lem:est}.
\end{proof}

\begin{proposition}[A non-optimal decay rate]\label{prop:non_op_2}
If $\phi$ is a solitary solution of \eqref{eq:eq}, then
\[
r^{1+\delta} \phi \in L^\infty(\R^2)
\]
for any $0\leq \delta <1$.
 \end{proposition}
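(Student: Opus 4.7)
The plan is to bootstrap the decay of Proposition~\ref{prop:non_op} by returning to the convolution identity $\phi = \tfrac{1}{2}K_\alpha * \phi^2$ and multiplying through by $r^{1+\delta}$. Since $1+\delta\geq 1$, the convexity estimate \eqref{eq:convex} applies and yields pointwise
\[
r^{1+\delta}(x,y)\,|\phi(x,y)| \lesssim \bigl(r^{1+\delta}|K_\alpha|\bigr) * \phi^2 \;+\; |K_\alpha| * \bigl(r^{1+\delta}\phi^2\bigr),
\]
so it will be enough to show that both convolutions on the right-hand side belong to $L^\infty(\R^2)$.

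For the term $|K_\alpha|*(r^{1+\delta}\phi^2)$, my plan is to exploit the bound $r\phi \in L^\infty$ from Proposition~\ref{prop:non_op} to write $r^{1+\delta}\phi^2 = (r\phi)(r^\delta\phi) \lesssim r^\delta\phi$. The elementary interpolation $r^\delta\phi = (r\phi)^\delta\phi^{1-\delta}$ combined with \eqref{eq:phi_int} then gives $r^\delta\phi \in L^q(\R^2)$ for every $q > 1/(1-\delta)$. Since $\delta<1$, such a $q$ can be chosen so that its conjugate exponent $p$ lies in the range $\bigl(1,\tfrac{8+2\alpha}{8-\alpha}\bigr)$ in which Proposition~\ref{prop:K_reg} furnishes $K_\alpha \in L^p$, and Young's inequality closes this term.

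The delicate term is $(r^{1+\delta}|K_\alpha|)*\phi^2$, because the weighted kernel $r^{1+\delta}K_\alpha$ is not, in general, in any single $L^p(\R^2)$: near the origin Proposition~\ref{prop:K_reg} forces $p<\tfrac{8+2\alpha}{8-\alpha}$, while at infinity Proposition~\ref{prop:K_decay} only gives $r^{1+\delta}|K_\alpha|\lesssim r^{-(1-\delta)}$, which is in $L^p$ only for $p>2/(1-\delta)$, and these two ranges become incompatible as $\delta\to 1$. I would circumvent this---and this is the main technical obstacle---by splitting the convolution integral into $\{|(x,y)-(x',y')|\leq 1\}$ and its complement. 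On the first region $r^{1+\delta}|K_\alpha|\leq |K_\alpha|$ lies in every $L^p$ allowed by Proposition~\ref{prop:K_reg}; on the second, the quadratic decay of Proposition~\ref{prop:K_decay} makes $r^{1+\delta}|K_\alpha|$ an element of $L^p$ for each $p>2/(1-\delta)$. In both regimes Young's inequality applies, since Lemma~\ref{lem:smooth} combined with \eqref{eq:phi_int} ensures that $\phi^2 \in L^{p'}(\R^2)$ for every $p'\in[1,\infty]$, so a matching dual exponent is always available.

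In summary, the main obstacle is precisely the incompatibility between the local and asymptotic $L^p$-integrability of $r^{1+\delta}K_\alpha$; splitting the convolution domain, together with the full $L^p$-integrability of $\phi$ coming from the bootstrap carried out at the start of this section, is what allows the argument to close uniformly in $0\leq \delta<1$.
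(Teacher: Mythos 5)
Your proposal is correct and follows essentially the same route as the paper: the convexity splitting of $r^{1+\delta}$ over the convolution, Young's inequality with $K_\alpha\in L^{1+\e}(\R^2)$ for $\e$ small against $r^{1+\delta}\phi^2\lesssim \|r\phi\|_\infty^{1+\delta}|\phi|^{1-\delta}\in L^{\frac{1+\e}{\e}}(\R^2)$ for the term carrying the weight on $\phi^2$, and the decay of $K_\alpha$ for the term carrying the weight on the kernel. If anything, your near/far decomposition of the weighted-kernel term is more explicit than the paper's treatment, which dismisses that term as ``clearly bounded by Young's inequality, Proposition~\ref{prop:K_decay} and the $L^2$-integrability of $\phi$''; your observation that $r^{1+\delta}K_\alpha$ need not lie in a single $L^p(\R^2)$ because of its possible singularity at the origin is a legitimate point that the splitting resolves cleanly.
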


 \begin{proof}
 We use the regularity in Proposition \ref{prop:non_op} to improve the decay rate by estimating
 \[
\|r^{1+\delta}\phi\|_\infty \lesssim \|r^{1+\delta}K_\alpha *\phi^2\|_\infty + \|K_\alpha * r^{1+\delta}\phi^2\|_\infty,
 \]
 where we also used the convexity of $r^{1+\delta}$. The first norm on the right-hand side above is clearly bounded by Young's inequality, Proposition \ref{prop:K_decay} and the $L_2$-integrability of $\phi$. For the second norm, let $\e>0$ be a small constant so that $0<\delta<\frac{1}{1+\e}<1$. Using that $K_\alpha \in L^{1+\e}(\R^2)$ for $\e>0$ small enough,  we estimate
 \[
\|K_\alpha * r^{1+\delta}\phi^2\|_\infty \lesssim \|K_\alpha\|_{L^{1+\e}(\R^2)}\|r^{1+\delta}\phi^2\|_{L^{\frac{1+\e}{\e}}(\R^2)}.
 \]
 Notice that
\[
\|r^{1+\delta}\phi^2\|_{L^{\frac{1+\e}{\e}}(\R^2)}^{\frac{\e}{1+\e}} = \int_{\R^2} |r\phi|^{\frac{(1+\delta)(1+\e)}{\e}} |\phi|^{(1-\delta)\frac{1+\e}{\e}}\, \mathrm{d}(x,y)\leq \|r\phi\|_\infty^{\frac{(1+\delta)(1+\e)}{\e}} \int_{\R^2} |\phi|^{(1-\delta)\frac{1+\e}{\e}}\, \mathrm{d}(x,y).
\]
By our choice of $\e>0$, we have $(1-\delta)\frac{1+\e}{\e}>1$, so  the above norm is bounded by \eqref{eq:phi_int}, which concludes the proof of the statement.
\end{proof}

\subsubsection*{Proof of Theorem \ref{thm:decay}}

In view of the discussion at the beginning of this section and Lemma \ref{lem:smooth}, we obtain our main result 
\[
r^2\phi \in L^\infty(\R^2)
\]
provided that (A) and (B) at the beginning of the section are satisfied.
% \begin{itemize}
% \item[(A)] $r^2K \in L^\infty(\R^2)$
% \item[(B)] there exists $1\leq q\leq \infty$ such that $K_\alpha \in L^q(\R^2)$ and $r^2\phi^2 \in L^{q^\prime}(\R^2)$, where $q^\prime$ is the dual conjugate of $q$.
% \end{itemize}
% are satisfied. 
The statement in (A) is proved in Proposition \ref{prop:K_decay}, while the first part of statement (B) follows from Proposition \ref{prop:K_reg}, where it is shown that
\[
K_\alpha \in L^{r}(\R^2)\qquad \mbox{for}\quad 1<r< \frac{8+2\alpha}{8-\alpha}.
\]
Now, we make use of the non-optimal decay estimate in Proposition \ref{prop:non_op_2} to show that indeed $r^2\phi^2 \in L^{r^\prime}(\R^2)$, where $r^\prime$ is the dual conjugate to $r$. For any $0\leq \delta<1$, we have that

\[
\int_{\R^2} |r^2\phi^2|^{r^\prime}\,\mathrm{d}(x,y) \leq \|r^{1+\delta}\phi\|_\infty^{\frac{2r^\prime}{1+\delta}}\int_{\R^2}  \phi^{2r^\prime\frac{\delta}{1+\delta}}\,\mathrm{d}(x,y).
\]
Choosing $\delta =r-1 \in (0,1)$ we find that
$
\frac{\delta}{1+\delta}r^\prime = \frac{r-1}{r} r^\prime =1
$ and the boundedness of $r^2\phi^2$ in $L^{r^\prime}(\R^2)$ follows from the $L^2$-integrability of $\phi$. Hence, statement (B) is shown, which concludes the proof of Theorem \ref{thm:decay}. \hfill\qedsymbol{} \\

To visualize the decay rate of the solutions we consider the product of numerically generated lump solutions with $1/r^2(x,y)=1/(x^2+y^2)$. Figure \ref{fig:decay} shows $x$ and $y-$cross sections of this product for $\alpha=2$, $\alpha=1.7$, and $\alpha=1.35$. As the decay rate is quadratic the result approaches to a constant value for increasing $|x|$ and $|y|$, as expected. We observe that the behaviour is similar for all $\alpha$ values  but the aforementioned constant becomes smaller for smaller values of $\alpha$. 
\begin{figure}[H]
	\begin{minipage}[t]{0.45\linewidth}
		\includegraphics[width=3.1in,height=2.3in]{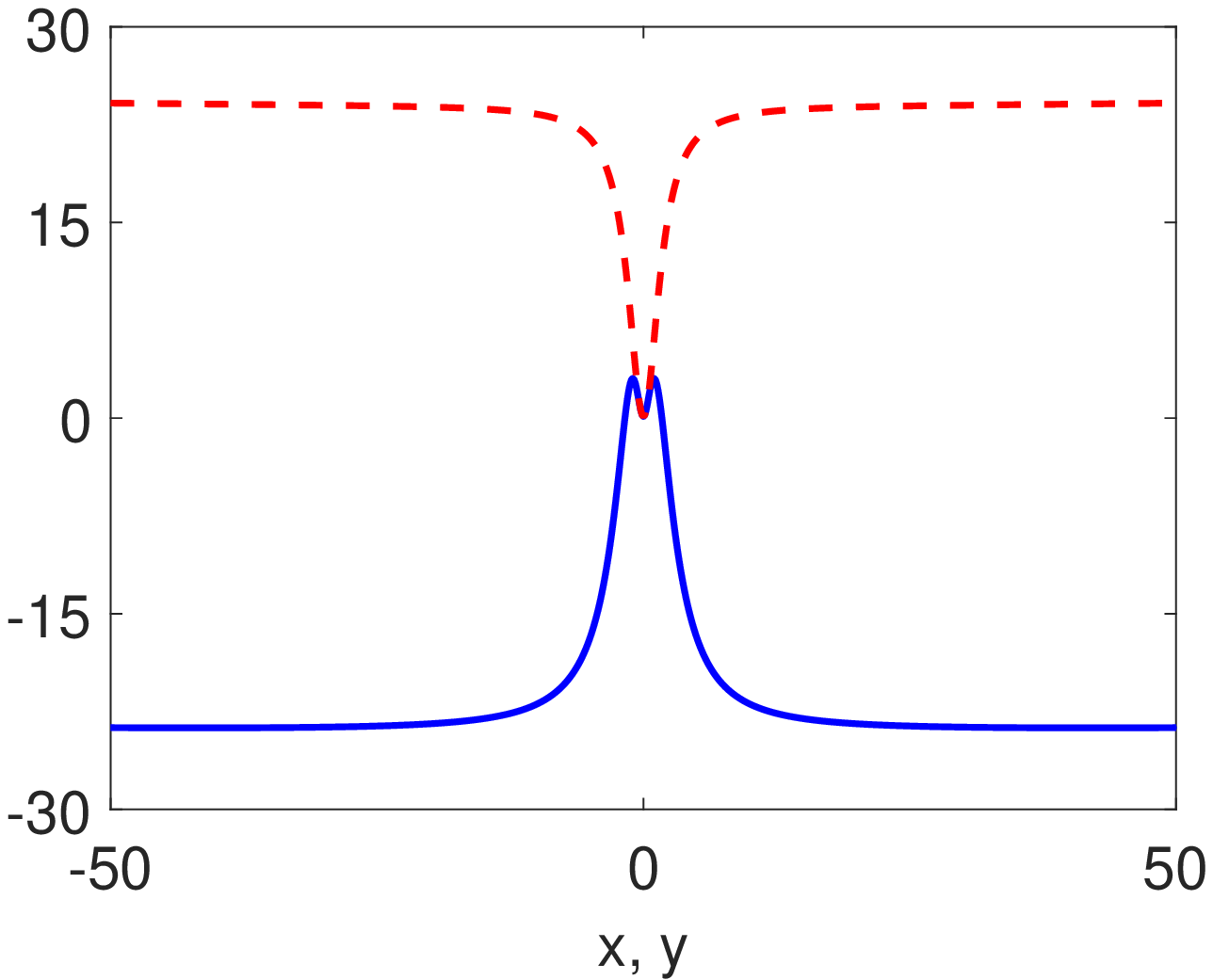}
	\end{minipage}
	\hspace{30pt}
	\begin{minipage}[t]{0.45\linewidth}
		\includegraphics[width=3.1in,height=2.3in]{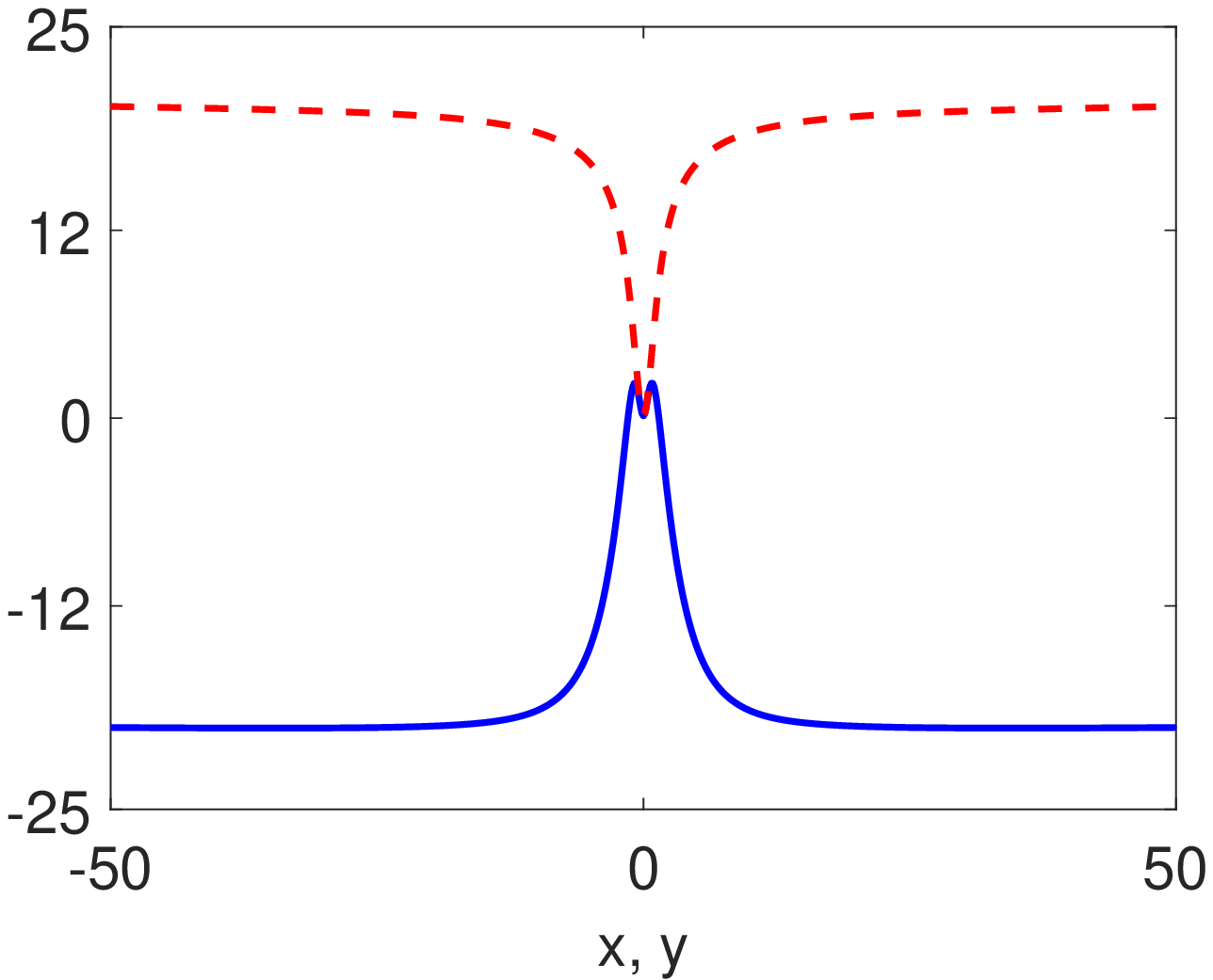}
	\end{minipage}
 \vspace{10pt}
\hspace{5cm}
	\begin{minipage}[t!]{0.45\linewidth}
		\includegraphics[width=3.1in,height=2.3in]{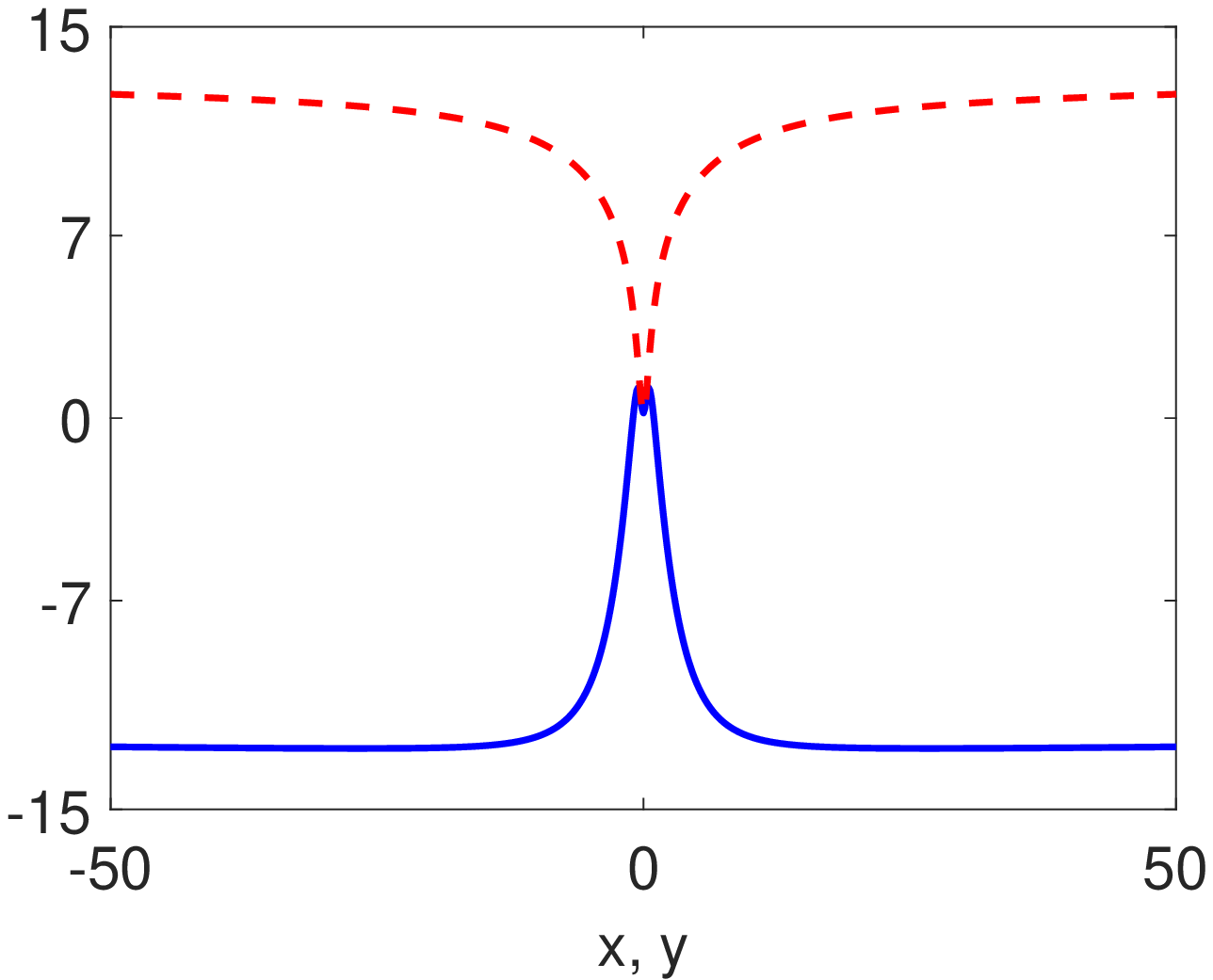}
	\end{minipage}
\caption{The  $x-$cross section (solid line) and the $y-$cross section (dashed line) of the  product of numerically generated lump solutions with  $1/r^2(x,y)=1/(x^2+y^2)$ for  $\alpha=2$ (top left panel), $\alpha=1.7$ (top right panel) and $\alpha=1.35$ (bottom panel).}
	\label{fig:decay}
\end{figure}

\medskip

\begin{remark}[Benjamin--Bona--Mahony KP equation]
\emph{We would like to point out that the decay result in Theorem \ref{thm:decay} is equally valid for lump solutions of the fractional BBM-KP equation, which  is when the term $\mathrm{D}_x^\alpha u_x$ in \eqref{eq:fKP} is replaced by $\mathrm{D}^\alpha _xu_t$.
}
\end{remark}

\begin{remark}[Rotation modified KP equation]
\emph{
Lump solutions $u(t,x,y)=\phi(x-ct,y)$ of the \emph{rotation modified KP equation} 
\[
(u_t + uu_x - \beta u_{xxx})_x+u_{yy}=\gamma u,
\]
where $\beta\in \R$ determines the type of dispersion and $\gamma>0$ is the Coriolis parameter due to the Earth's rotation,
exist for $\beta>0$ and $c<2\sqrt{\gamma\beta}$, cf. \cite[Theorem 2.2, Remark 2.4]{Chen0}. They satisfy the convolution equation
\[
\phi = -\frac{1}{2}K*\phi^2, \qquad \hat{K}(\xi_1,\xi_2)=m(\xi_1,\xi_2),
\]
where 
\[m(\xi_1,\xi_2)=\frac{\xi_1^2}{-c\xi_1^2+\beta \xi_1^4+\xi_2^2+\gamma}.
\]
Due to the Coriolis parameter $\gamma>0$, the symbol $m$ is \textit{smooth} at the origin, which allows lump solutions to decay exponentially at infinity (cf. \cite[Theorem 1.6]{Chen1}). If the dispersive term $\beta u_{xxx}$ were replaced by the fractional term $-\beta |\mathrm{D}_x|^\alpha u_x$, which would lead to an \emph{fractional rotation modified KP equation}, we'd expect a decay of lump solutions, which depends on $\alpha$ (in a similar way as we see it for the fractional KdV equation \cite{Eychenne, Kenig}).
}
\end{remark}

\begin{remark}[Full dispersion KP equation]
\emph{
The \emph{full dispersion KP equation} is given by
\[
u_t - l(\mathrm{D})u_x + uu_x=0,
\]
where
\begin{equation*}
l(\mathrm{D})=(1+\beta |\mathrm{D}|^2)^\frac{1}{2}\left(\frac{\tanh(|\mathrm{D}|)}{|\mathrm{D}|}\right)^\frac{1}{2} \left(1+\frac{\mathrm{D}_y^2}{\mathrm{D}_x^2}\right)^\frac{1}{2}.
\end{equation*}
Here $\beta>0$ is the surface tension coefficient. 
Existence of lump solutions for full dispersion KP equation  is shown in \cite{eg, egn}.
In the same way as for the fKP-I equation, the transverse direction induces a discontinuity at the origin of the symbol $m(\xi_1,\xi_2)= \frac{1}{c+l(\xi_1,\xi_2)}$. Therefore, the decay of lump solutions will also be at most quadratically.
}
\end{remark}

\bigskip
	
	\appendix
	
	\section{Auxiliary results}

 \begin{lemma}[Fractional integration by parts] \label{lem:frac_int}
 Let $\alpha\geq 0$. Then,
 \begin{align*}
\int_{\R} \phi \mathrm{D}^\alpha_x \phi \,\mathrm{d}x = \int_{\R^2}\left(\mathrm{D}^{\frac{\alpha}{2}}\phi\right)^2\,\mathrm{d}x, \qquad \int_{\R}x \phi_x \mathrm{D}^\alpha_x \phi \, dx=\frac{\alpha-1}{2}\int_{\R} \left( \mathrm{D}^\frac{\alpha}{2}_x\phi\right)^2\, \mathrm{d}x 
\end{align*}
and
\begin{align*}
\int_{\R} x^2\phi_{xx} \mathrm{D}^\alpha_x \phi \,\mathrm{d}x = -\int_{\R} x^2 \left( \mathrm{D}^\frac{\alpha}{2}_x\phi_x\right)^2\,\mathrm{d}x + \tfrac{1}{4}(\alpha-2)^2 \int_{\R}\left( \mathrm{D}^\frac{\alpha}{2}_x\phi\right)^2\,\mathrm{d}x.
\end{align*}

 \end{lemma}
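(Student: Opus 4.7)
The plan is to prove all three identities by transferring to Fourier space via Plancherel, exploiting $\widehat{\mathrm{D}^\alpha_x \phi}(\xi) = |\xi|^\alpha \hat\phi(\xi)$. I would first establish the identities for Schwartz $\phi$ to dispatch all boundary-term issues at $\xi = 0$ and $|\xi|\to\infty$, then extend by density to functions whose moments in $x$ are controlled (as provided for our lumps by Lemma~\ref{lem:est}). The first identity is immediate: $\int_\R \phi \mathrm{D}^\alpha_x \phi\,\mathrm{d}x = \tfrac{1}{2\pi}\int_\R |\xi|^\alpha|\hat\phi|^2\,\mathrm{d}\xi$, which equals $\int_\R (\mathrm{D}^{\alpha/2}_x\phi)^2\,\mathrm{d}x$ by Plancherel again.

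For the second identity, I compute $\widehat{x\phi_x}(\xi) = \mathrm{i}\partial_\xi(\mathrm{i}\xi\hat\phi) = -\hat\phi - \xi\hat\phi'$ and apply Plancherel to get
\[
\int_\R x\phi_x \mathrm{D}^\alpha_x \phi\,\mathrm{d}x = -\frac{1}{2\pi}\int_\R |\xi|^\alpha|\hat\phi|^2\,\mathrm{d}\xi - \frac{1}{2\pi}\int_\R \xi|\xi|^\alpha\,\mathrm{Re}(\hat\phi'\overline{\hat\phi})\,\mathrm{d}\xi,
\]
where taking real parts is justified since the left-hand side is real. The key trick is $\mathrm{Re}(\hat\phi'\overline{\hat\phi}) = \tfrac{1}{2}\partial_\xi|\hat\phi|^2$; an integration by parts using $\partial_\xi(\xi|\xi|^\alpha) = (\alpha+1)|\xi|^\alpha$ turns the second integral into $-\tfrac{\alpha+1}{2}\int|\xi|^\alpha|\hat\phi|^2$, and the coefficients collapse to $-1 + \tfrac{\alpha+1}{2} = \tfrac{\alpha-1}{2}$.

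For the third identity I follow the same template starting from $\widehat{x^2\phi_{xx}}(\xi) = \partial_\xi^2(\xi^2\hat\phi) = 2\hat\phi + 4\xi\hat\phi' + \xi^2\hat\phi''$. The first two summands are handled exactly as above. For the new piece $\tfrac{1}{2\pi}\int \xi^2|\xi|^\alpha \hat\phi''\overline{\hat\phi}\,\mathrm{d}\xi$ I integrate by parts once, using $\partial_\xi(\xi^2|\xi|^\alpha) = (\alpha+2)\xi|\xi|^\alpha$, which produces $-\tfrac{1}{2\pi}\int\xi^2|\xi|^\alpha|\hat\phi'|^2\,\mathrm{d}\xi$ plus a cross term treated again by the real-part trick. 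Finally I identify $\tfrac{1}{2\pi}\int\xi^2|\xi|^\alpha|\hat\phi'|^2\,\mathrm{d}\xi$ with $\int x^2(\mathrm{D}^{\alpha/2}_x\phi_x)^2\,\mathrm{d}x$ \emph{plus} a correction $\tfrac{\alpha(\alpha+2)}{4}\int(\mathrm{D}^{\alpha/2}_x\phi)^2\,\mathrm{d}x$, by computing $\widehat{x\,\mathrm{D}^{\alpha/2}_x\phi_x}(\xi) = \mathrm{i}\partial_\xi(\mathrm{i}\xi|\xi|^{\alpha/2}\hat\phi)$ and expanding the modulus squared. Adding everything up, the coefficient of $\int(\mathrm{D}^{\alpha/2}_x\phi)^2$ is $2 - 2(\alpha+1) + \tfrac{(\alpha+2)(\alpha+1)}{2} - \tfrac{\alpha(\alpha+2)}{4}$, which simplifies exactly to $\tfrac{(\alpha-2)^2}{4}$.

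The main obstacle is the bookkeeping in the third identity: several $|\xi|^\alpha$-weighted terms must conspire to produce the clean coefficient $\tfrac{(\alpha-2)^2}{4}$, so one must track signs and derivatives carefully. A secondary but routine point is justifying the $\xi$-integrations by parts — unproblematic for $\phi \in \mathcal{S}(\R)$, and then extended by density to the relevant weighted Sobolev class via the standard truncation/cutoff argument already hinted at in the proof of Lemma~\ref{lem:est}.
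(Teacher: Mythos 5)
Your proposal is correct and follows essentially the same route as the paper's proof: pass to the Fourier side via Plancherel, integrate by parts in $\xi$ using $\partial_\xi(\xi|\xi|^\alpha)=(\alpha+1)|\xi|^\alpha$ and $\partial_\xi(\xi^2|\xi|^\alpha)=(\alpha+2)\xi|\xi|^\alpha$ (your real-part trick is the same manipulation the paper performs via the symmetry of the $L^2$ pairing), and use the auxiliary identity $\tfrac{1}{2\pi}\int\xi^2|\xi|^\alpha|\hat\phi'|^2\,\mathrm{d}\xi=\int x^2(\mathrm{D}^{\alpha/2}_x\phi_x)^2\,\mathrm{d}x+\tfrac{\alpha(\alpha+2)}{4}\int(\mathrm{D}^{\alpha/2}_x\phi)^2\,\mathrm{d}x$, which is exactly the paper's equation for $\int x^2(\mathrm{D}^{\alpha/2}_x\phi_x)^2$. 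All your coefficients check out, and your remark on first working with Schwartz functions is a point the paper leaves implicit.
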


 \begin{proof}
 The first assertion follows immediately by
 \[
\int_{\R} \phi \mathrm{D}^\alpha_x \phi \,\mathrm{d}x = \langle \phi, \overline{\mathrm{D}^\alpha_x \phi }\rangle = \langle \hat \phi, |\xi|^\alpha \overline{\hat \phi }\rangle =\langle |\xi|^\frac{\alpha}{2}\hat \phi, \overline{|\xi|^\frac{\alpha}{2} \hat \phi }\rangle = \int_{\R}\left(\mathrm{D}^\frac{\alpha}{2}_x\phi\right)^2\,\mathrm{d}x. \]
% For the second statement, we write
% \[
% \int_{\R} x\phi_x \mathrm{D}^\alpha_x \phi \,dx = \langle x\phi_x, \overline{\mathrm{D}^\alpha_x \phi }\rangle = -\langle (\xi \hat \phi )_\xi, |\xi|^\alpha \hat \phi \rangle = -\langle \hat \phi, |\xi|^\alpha \hat \phi \rangle -\langle \xi \hat \phi _\xi, |\xi|^\alpha \hat \phi \rangle = -\int_{\R} \left(\mathrm{D}^\frac{\alpha}{2}_x\phi\right)^2\, dx -\langle \hat \phi _\xi, |\xi|^\alpha \xi \hat \phi \rangle
% \]
For the second statement, notice first that
\[
\langle \hat \phi _\xi, \overline{|\xi|^\alpha \xi \hat \phi} \rangle=- \langle \hat \phi, (\alpha+1) |\xi|^\alpha \overline{\hat \phi }+ |\xi|^\alpha \xi \overline{\hat \phi_\xi }\rangle = - (\alpha+1)\int_{\R^2}\left(\mathrm{D}^\frac{\alpha}{2}_x\phi\right)^2\, \mathrm{d}x - \langle \hat \phi_\xi, \overline{|\xi|^\alpha \xi\hat \phi }\rangle,
\]
therefore
\begin{equation}\label{eq:1}
\langle \hat \phi _\xi,\overline{ |\xi|^\alpha \xi \hat \phi }\rangle=-\tfrac{\alpha+1}{2} \int_{\R}\left(\mathrm{D}^\frac{\alpha}{2}_x\phi\right)^2\, \mathrm{d}x,
\end{equation}
which implies 
\[
\int_{\R} x \phi_x \mathrm{D}^\alpha_x \phi \,\mathrm{d}x = -\langle  (\xi \hat \phi)_\xi, |\xi|^\alpha \overline{\hat \phi} \rangle = - \langle \hat \phi, |\xi|^\alpha \overline{\hat \phi} \rangle - \langle \hat \phi_\xi, |\xi|^\alpha \xi \overline{\hat \phi} \rangle =-\int_{\R} \left( \mathrm{D}^\frac{\alpha}{2}_x\phi\right)^2\, \mathrm{d}x + \frac{\alpha+1}{2}\int_{\R}\left(\mathrm{D}^\frac{\alpha}{2}_x\phi\right)^2\, \mathrm{d}x.
\]
Turning to the third statement, notice first that
\begin{align}\label{eq:2}
\int_{\R}x^2 \left(\mathrm{D}^{\frac{\alpha}{2}}_x\phi\right)^2\,\mathrm{d}x&=-\langle (|\xi|^\frac{\alpha}{2}\xi \hat \phi)_{\xi\xi}, |\xi|^\frac{\alpha}{2}\xi \overline{\hat \phi}\rangle= -\left(\tfrac{\alpha}{2}+1\right)\tfrac{\alpha}{2}\int_{\R}\left(\mathrm{D}^\frac{\alpha}{2}_x\phi\right)^2\, \mathrm{d}x+ \langle \hat \phi_\xi, |\xi|^\alpha \xi^2 \overline{\hat \phi_\xi} \rangle,
\end{align}
where we used \eqref{eq:1}.
Now,
\begin{align*}
\int_{\R} x^2\phi_{xx} \mathrm{D}^\alpha_x \phi \,\mathrm{d}x  &= \langle (\xi^2 \hat \phi )_{\xi\xi}, |\xi|^\alpha \hat \phi \rangle \\
&= 2\langle \hat \phi , |\xi|^\alpha \overline{\hat \phi} \rangle + 4\langle \hat \phi_\xi , |\xi|^\alpha \xi \overline{\hat \phi} \rangle+ \langle\hat \phi_{\xi \xi}, |\xi|^\alpha  \xi^2\overline{\hat \phi} \rangle\\
&= 2\langle \hat \phi , |\xi|^\alpha \overline{\hat \phi} \rangle + (2-\alpha)\langle \hat \phi_\xi , |\xi|^\alpha \xi \overline{\hat \phi} \rangle- \langle \hat \phi_{\xi}, |\xi|^\alpha  \xi^2\overline{\hat \phi_\xi} \rangle\\ 
&= -\int_{\R} x^2 \left( \mathrm{D}^\frac{\alpha}{2}_x\phi_x\right)^2\,\mathrm{d}x + \tfrac{1}{4}(\alpha-2)^2 \int_{\R}\left( \mathrm{D}^\frac{\alpha}{2}_x\phi\right)^2\,\mathrm{d}x,
\end{align*}
by \eqref{eq:1} and \eqref{eq:2}.
\end{proof}
	
	\begin{lemma}[Properties of $F$]\label{lem:F}
		Let $\alpha >0$, $G(\xi)=\mathrm{i}x\xi - |y|\xi (1+\xi^\alpha)^\frac{1}{2}$ for $\xi\geq 0$ and $y\neq 0$. The function \[
  F(\xi)=\frac{\mathrm{d}}{\mathrm{d}\xi} \left( \frac{\xi}{(1+\xi^\alpha)^{\frac{1}{2}}}\frac{1}{G^\prime(\xi)} \right)\frac{1}{G^\prime(\xi)}\qquad \mbox{for}\quad \xi\geq 0
  \]
  satisfies
		\begin{itemize}
			\item[(a)] $F(0)=\frac{1}{[G^\prime(0)]^2}=(\mathrm{i}x-|y|)^{-2}$
			\item[(b)] 
   %$F^\prime(\xi)=\frac{\frac{\alpha^2}{2}\xi^{2\alpha-1}-\frac{\alpha^2}{4}\xi}{(1+\xi^\alpha)^\frac{5}{2}[G^\prime(\xi)]^2} -\frac{ (1+\frac{7\alpha}{2}\xi^\alpha-\xi)G^{\prime\prime}(\xi)- \xi(1+\xi^\alpha)G^{\prime\prime\prime}(\xi)}{(1+\xi^\alpha)^{\frac{3}{2}}[G^\prime(\xi)]^3}
	%		+\frac{3\xi[G^{\prime\prime}(\xi)]^2}{(1+\xi^\alpha)^\frac{1}{2}[G^\prime(\xi)]^4}$ satisfies
			
			$|F^\prime(\xi)|\lesssim T(\xi) \frac{1}{x^2+y^2}$, for some function $T$ such that $Te^{G}\in L^1(\R_+).$
			
		\end{itemize}
	\end{lemma}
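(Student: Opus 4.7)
The plan is a direct calculation relying on two algebraic features of $G^\prime$. For part (a) I would set $g(\xi):=\xi(1+\xi^\alpha)^{-1/2}$ and apply the product rule to $F=\frac{1}{G^\prime}\frac{\mathrm{d}}{\mathrm{d}\xi}\bigl(\frac{g}{G^\prime}\bigr)$. Since $g(0)=0$ and $g^\prime(0)=1$, only the first term in the expansion of $\frac{\mathrm{d}}{\mathrm{d}\xi}\bigl(\frac{g}{G^\prime}\bigr)$ survives at $\xi=0$, giving $\frac{\mathrm{d}}{\mathrm{d}\xi}\bigl(\frac{g}{G^\prime}\bigr)(0)=\frac{1}{G^\prime(0)}$ and hence $F(0)=1/(G^\prime(0))^2$. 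Direct differentiation yields $G^\prime(0)=\mathrm{i}x-|y|$, giving the stated formula.

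For part (b) the crucial observation is that $G^\prime(\xi)=\mathrm{i}x-|y|q(\xi)$ with $q(\xi):=\frac{1+(1+\alpha/2)\xi^\alpha}{(1+\xi^\alpha)^{1/2}}$. A direct verification by squaring shows $q(\xi)\ge 1$ for all $\xi\ge 0$, so the two key inequalities
\begin{equation*}
|G^\prime(\xi)|^2=x^2+y^2q(\xi)^2\ge x^2+y^2,\qquad \frac{|y|}{|G^\prime(\xi)|}\le\frac{1}{q(\xi)}\le 1
\end{equation*}
hold. Moreover $G^{\prime\prime}(\xi)=-|y|q^\prime(\xi)$ and $G^{\prime\prime\prime}(\xi)=-|y|q^{\prime\prime}(\xi)$, so every higher derivative of $G$ carries an explicit factor of $|y|$.

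Writing $F=\frac{g^\prime}{(G^\prime)^2}-\frac{gG^{\prime\prime}}{(G^\prime)^3}$ and differentiating produces
\begin{equation*}
F^\prime=\frac{g^{\prime\prime}}{(G^\prime)^2}-\frac{3g^\prime G^{\prime\prime}+gG^{\prime\prime\prime}}{(G^\prime)^3}+\frac{3g(G^{\prime\prime})^2}{(G^\prime)^4}.
\end{equation*}
I would bound each of the three terms by substituting the formulas $G^{\prime\prime}=-|y|q^\prime$, $G^{\prime\prime\prime}=-|y|q^{\prime\prime}$ and absorbing each $|y|$ against one power of $|G^\prime|$ in the denominator via $|y|/|G^\prime|\le 1$. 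This cancellation leaves exactly the factor $|G^\prime|^{-2}\le(x^2+y^2)^{-1}$ in each term, yielding
\begin{equation*}
|F^\prime(\xi)|\le\frac{T(\xi)}{x^2+y^2},\qquad T(\xi):=|g^{\prime\prime}|+3|g^\prime q^\prime|+|gq^{\prime\prime}|+3|g|(q^\prime)^2.
\end{equation*}

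It remains to verify $Te^G\in L^1(\R_+)$. A straightforward asymptotic inspection of $g,q$ and their derivatives gives $T(\xi)\lesssim \xi^{\alpha-1}$ as $\xi\to 0^+$ (integrable since $\alpha>0$) and polynomial growth of at most $\xi^{\alpha/2-1}$ at infinity; since $|e^{G(\xi)}|=e^{-|y|\xi(1+\xi^\alpha)^{1/2}}$ decays faster than any polynomial for each fixed $y\neq 0$, the product $Te^G$ is integrable on $\R_+$. The main obstacle is the bookkeeping in the computation of $F^\prime$: without the observation that every $G^{(k)}$ with $k\ge 2$ brings down a factor of $|y|$ that can be cancelled against a power of $|G^\prime|$, the naive estimate would yield decay only of order $(x^2+y^2)^{-3/2}$ or $(x^2+y^2)^{-2}$, neither of which meshes with the representation $K_\alpha^+(x,y)=F(0)+\int_0^\infty F^\prime e^G\,\mathrm{d}\xi$ to give the quadratic decay required in Proposition~\ref{prop:K_decay}.
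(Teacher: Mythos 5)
Your proof is correct and follows essentially the same route as the paper: compute $F$ and $F^\prime$ explicitly (your three-term expression for $F^\prime$ is exactly the paper's decomposition into $T_1+T_2+T_3$), use $|G^\prime|^2\ge x^2+y^2$, and absorb each factor of $|y|$ coming from $G^{\prime\prime}$ and $G^{\prime\prime\prime}$ against one power of $|G^\prime|$ in the denominator. Your parametrization $G^\prime=\mathrm{i}x-|y|q(\xi)$ with $q\ge 1$ is a slightly cleaner way of organizing the same cancellation that the paper carries out term by term with explicit powers of $|y|$ and $(x^2+y^2)$.
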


\begin{proof} Let us first summarize all needed derivatives for the function $G$:
	\begin{align*}
		G(\xi)&=\mathrm{i}x\xi - |y|\xi (1+\xi^\alpha)^\frac{1}{2}\\
		G^\prime(\xi)&= \mathrm{i}x -|y|\left((1+\xi^\alpha)^\frac{1}{2} + \frac{\alpha}{2}\xi^\alpha(1+\xi^\alpha)^{-\frac{1}{2}}\right)
		= \frac{1}{2}(1+\xi^\alpha)^{-\frac{1}{2}} \left(2\mathrm{i}x (1+\xi^\alpha)^{\frac{1}{2}}- |y|\left(2+ (2+\alpha)\xi^\alpha\right)\right)\\
		G^{\prime\prime}(\xi)&=-\frac{\alpha}{4}|y|(1+\xi^\alpha)^{-\frac{3}{2}}\xi^{\alpha-1}\left(2(1+\alpha)+(2+\alpha)\xi^\alpha\right)\\
		G^{\prime\prime\prime}(\xi)&=  \frac{\alpha}{8}|y|(1+\xi^\alpha)^{-\frac{5}{2}}\xi^{\alpha-2}\left(4(1+\xi^\alpha)^2-\alpha^2(\xi^{2\alpha}+2\xi^\alpha+4)\right)
	\end{align*}
Then, we compute $F$ as
\begin{align*}
F(\xi)=	\frac{1}{(1+\xi^\alpha)^\frac{1}{2}[G^\prime(\xi)]^2} - \frac{\xi \left(\frac{\alpha}{2}(1+\xi^\alpha)^{-\frac{1}{2}}\xi^{\alpha-1}G^\prime(\xi) +(1+\xi^\alpha)^{\frac{1}{2}}G^{\prime\prime}(\xi) \right)}{(1+\xi^\alpha)\left[G^\prime(\xi)\right]^3},
\end{align*}
which yields $F(0)=\frac{1}{[G^\prime(0)]^2}=(\mathrm{i}x-|y|)^{-2}$ and proves part (a).
A tedious, but straightforward computation yields that the  derivative $F$ is given by 
\begin{align*}
F'(\xi)&=-\frac{\alpha}{2}\frac{(1+\alpha)\xi^{\alpha-1}+(1-\frac{\alpha}{2})\xi^{2\alpha-1}}{(1+\xi^\alpha)^\frac{5}{2} G'(\xi)^2}+\frac{(\frac{3}{2}\alpha\xi^\alpha-3(1+\xi^\alpha))G''(\xi)-(1+\xi^\alpha)\xi G'''(\xi)}{(1+\xi^\alpha)^\frac{3}{2}G'(\xi)^3}+\frac{3\xi G''(\xi)^2}{(1+\xi^\alpha)^\frac{1}{2}G'(\xi)^4}   \\
&=: T_1(\xi)+T_2(\xi)+T_3(\xi)
\end{align*}

Now, we insert the expressions for $G^\prime, G^{\prime\prime},$ and $G^{\prime\prime\prime}$. We have
\begin{align*}
|G^\prime(\xi)|^2 &=\frac{1}{4}(1+\xi^\alpha)^{-1}\left(4x^2(1+\xi^\alpha)+y^2(2+(2+\alpha)\xi^\alpha)^2\right)\gtrsim x^2+y^2,\\
|G^{\prime\prime}(\xi)|&\eqsim |y|(1+\xi^\alpha)^{-\frac{1}{2}}\xi^{\alpha-1},\\
|G^{\prime\prime\prime}(\xi)|&\eqsim  |y|(1+\xi^\alpha)^{-\frac{5}{2}}\xi^{\alpha-2}\left|(4-\alpha^2)(1+\xi^\alpha)^2-3\alpha^2\right|.
\end{align*}
Starting with $T_1$ we estimate
\begin{align}\label{eq:T1}
	|T_1(\xi)| \lesssim \frac{\xi^{\alpha-1}}{(1+\xi^\alpha)^\frac{3}{2}(x^2+y^2)}.
\end{align}
For $T_2$ we find 
\begin{align}\label{eq:T_2}
	|T_2(\xi)|\lesssim |y|\frac{ \xi^{\alpha-1}}{(1+\xi^\alpha)(x^2+y^2)^\frac{3}{2}}.
\end{align}
Eventually, we estimate $T_3$ as
\begin{align}\label{eq:T_3}
|T_3(\xi)|\lesssim y^2	\frac{\xi^{2\alpha-1}}{(1+\xi^\alpha)^\frac{3}{2}(x^2+y^2)^2}.
\end{align}
Summarizing \eqref{eq:T1}-\eqref{eq:T_3}, we find that $|F^\prime(\xi)|\lesssim T(\xi) \frac{1}{x^2+y^2}$, where $Te^{G}\in L^1(\R_+)$ which proves part (b).
\medskip

\end{proof}

\begin{lemma}[Properties of $E$]\label{lem:E}
Let $\alpha>0$, $G(\xi)=\mathrm{i}x\xi - |y|\xi (1+\xi^\alpha)^\frac{1}{2}$ for $\xi\geq 0$  and $y\neq 0$. The function 
\[
E(\xi)=\frac{1}{(1+\xi^\alpha)}\frac{1}{G^\prime(\xi)}\qquad \mbox{for}\quad \xi\geq 0
\]
satisfies
\begin{itemize}
\item[(a)] $E(0)=\frac{1}{G^\prime(0)}=\mathrm{i}x-|y|$
\item[(b)] $|E^\prime(\xi)|\lesssim S(\xi) \frac{1}{x^2+y^2}$, for some function $S$ such that $Se^{G}\in L^1(\R_+)$.
\end{itemize}
\end{lemma}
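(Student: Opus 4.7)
The plan is to mirror the proof of Lemma \ref{lem:F}, since $E$ differs from $F$ only in the absence of the multiplier $\xi(1+\xi^\alpha)^{-1/2}$ in the numerator, which means $E'$ has one fewer summand and one fewer factor of $G'$ in its denominators. The main ingredients are already assembled in the proof of Lemma \ref{lem:F}, so no essentially new analysis is required.

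For part (a), I would just evaluate directly at $\xi=0$: the prefactor $(1+\xi^\alpha)$ reduces to $1$, leaving $E(0) = 1/G'(0)$, and since $G'(0) = \mathrm{i}x - |y|$ one reads off $|E(0)| \eqsim (x^2+y^2)^{-1/2}$.

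For part (b), the first step is to apply the quotient rule, obtaining
\[
E'(\xi) \;=\; -\frac{\alpha\,\xi^{\alpha-1}}{(1+\xi^\alpha)^{2}\,G'(\xi)} \;-\; \frac{G''(\xi)}{(1+\xi^\alpha)\,G'(\xi)^{2}}.
\]
The second step is to invoke the pointwise bounds on the derivatives of $G$ already extracted during the proof of Lemma \ref{lem:F}, namely
\[
|G'(\xi)|^{2} \;\gtrsim\; x^{2}+y^{2}, \qquad |G''(\xi)| \;\lesssim\; |y|\,(1+\xi^\alpha)^{-1/2}\,\xi^{\alpha-1},
\]
both of which follow from writing $G'(\xi) = \mathrm{i}x - |y|(1+\xi^\alpha)^{-1/2}\bigl(1+(1+\tfrac{\alpha}{2})\xi^\alpha\bigr)$ and separating real and imaginary parts. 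Substituting these into the two summands of $E'$, and using $|y|\leq (x^2+y^2)^{1/2}$ where it helps, yields $|E'(\xi)| \lesssim S(\xi)\,(x^{2}+y^{2})^{-1}$ for a function $S$ comparable to a combination of $\xi^{\alpha-1}/(1+\xi^\alpha)^{2}$ and $\xi^{\alpha-1}/(1+\xi^\alpha)^{3/2}$.

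The final step is to check that $Se^{G}\in L^{1}(\R_+)$. Since $\operatorname{Re} G(\xi) = -|y|\xi(1+\xi^\alpha)^{1/2}\leq 0$, we have $|e^{G}|\leq 1$, so it suffices that $S$ itself is integrable on $\R_+$, which is immediate: $S(\xi)\sim \xi^{\alpha-1}$ near the origin and $S(\xi)\lesssim \xi^{-1-\alpha/2}$ at infinity, both integrable for $\alpha>0$. The only delicate point, inherited verbatim from Lemma \ref{lem:F}, is the coercivity estimate $|G'(\xi)|^{2}\gtrsim x^{2}+y^{2}$; but since the coefficient $1+(1+\tfrac{\alpha}{2})\xi^\alpha$ is bounded below by a positive constant on $\R_+$, no new obstacle arises, and the argument reduces to bookkeeping on the two explicit summands of $E'$.
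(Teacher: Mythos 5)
Your overall strategy---compute $E'$ by the quotient rule and feed in the coercivity bound $|G'(\xi)|^2\gtrsim x^2+y^2$ together with $|G''(\xi)|\lesssim|y|(1+\xi^\alpha)^{-1/2}\xi^{\alpha-1}$---is exactly the paper's, and your two-term expression
\[
E'(\xi)=-\frac{\alpha\,\xi^{\alpha-1}}{(1+\xi^\alpha)^2G'(\xi)}-\frac{G''(\xi)}{(1+\xi^\alpha)\,G'(\xi)^2}
\]
is the correct derivative of the stated $E$ (the paper's own proof records a single term with $[G'(\xi)]^2$ in the denominator, which is not what the quotient rule produces from the stated $E$). Part (a) is fine (you also implicitly correct the statement: $E(0)=(\mathrm{i}x-|y|)^{-1}$, not $\mathrm{i}x-|y|$), and the final integrability check of $S$ using $|e^{G}|=e^{-|y|\xi(1+\xi^\alpha)^{1/2}}\leq 1$ is correct.

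The gap is in the assertion that these ingredients yield $|E'(\xi)|\lesssim S(\xi)(x^2+y^2)^{-1}$. The first summand carries only \emph{one} power of $G'$ in the denominator, so $|G'|\gtrsim(x^2+y^2)^{1/2}$ gives only $\frac{\xi^{\alpha-1}}{(1+\xi^\alpha)^2}(x^2+y^2)^{-1/2}$, and the inequality $|y|\leq(x^2+y^2)^{1/2}$ can only lower, never raise, the power of $(x^2+y^2)$ in the denominator. Likewise, in the second summand the factor $|y|$ coming from $G''$ must be absorbed, and $|y|/(x^2+y^2)\leq(x^2+y^2)^{-1/2}$, so this term too is only $O\bigl(S(\xi)(x^2+y^2)^{-1/2}\bigr)$ with $S$ independent of $(x,y)$. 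Indeed the stronger bound is genuinely false: for fixed small $y\neq0$, large $|x|$ and $\xi=1$, the first summand is of size $1/|x|$, not $1/x^2$. What your argument actually proves is $|E'(\xi)|\lesssim S(\xi)(x^2+y^2)^{-1/2}$---which, to be fair, is precisely what the application in Lemma \ref{lem:h}(b) requires, since after a single integration by parts one only needs $|H^+_\alpha(x,y)|\lesssim(x^2+y^2)^{-1/2}$ and this matches the size of the boundary term $E(0)$. But the exponent claimed in part (b) of the statement does not follow from your estimates (nor from the paper's displayed computation), so you should either supply a genuinely new argument for it or record the weaker, sufficient bound.
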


\begin{proof}
The proof follows by direct computation. Recall that 
\begin{equation}\label{eq:E}
|G^\prime(\xi)|^2 =\frac{1}{4}(1+\xi^\alpha)^{-1}\left(4x^2(1+\xi^\alpha)+y^2(2+(2+\alpha)\xi^\alpha)^2\right)\gtrsim x^2+y^2
\end{equation}
Part (a) follows immediately from $G^\prime(0)=\mathrm{i}x-|y|$. For part (b), we compute the derivative of $E^\prime$ and use \eqref{eq:E} to estimate
\[
E^\prime(\xi)=\frac{\alpha}{4} \frac{\xi^{\alpha-1}}{(1+\xi^\alpha)^{2}[G^\prime(\xi)]^2}\lesssim \frac{\xi^{\alpha-1}}{(1+\xi^\alpha)^2}\frac{1}{x^2+y^2},
\]
which yields the statement.
  \end{proof}
\bigskip

\noindent \textbf{Acknowledgements}\\
The authors would like to thank Christian Klein and Dmitry E. Pelinovsky for their helpful discussions. This research was carried out while D.N. was supported by the Wallenberg foundation.

\end{document}